\title{Finitely dependent coloring}
\date{14 March 2014 (revised 6 April 2015)}
\newtheorem{thm}{Theorem}
\newtheorem*{thm*}{Theorem}
\newtheorem{prop}[thm]{Proposition}
\newtheorem{lemma}[thm]{Lemma}
\newtheorem{cor}[thm]{Corollary}
\crefname{thm}{Theorem}{Theorems}
\crefname{lemma}{Lemma}{Lemmas}
\crefname{prop}{Proposition}{Propositions}
\crefname{cor}{Corollary}{Corollaries}
\crefname{section}{Section}{Sections}
\crefname{figure}{Figure}{Figures}
\newcommand{\E}{\mathbb E}
\renewcommand{\P}{\mathbb P}
\newcommand{\Z}{\mathbb Z}
\newcommand{\R}{\mathbb R}
\newcommand{\ind}{\mathbf{1}}
\DeclareMathOperator{\DD}{DD}
\DeclareMathOperator{\trace}{trace}
\newcommand{\p}{{+}}
\newcommand{\m}{{-}}
\renewcommand{\hat}{\widehat}
\newcommand{\simm}{\stackrel m\sim}
\newcommand{\ph}{p_{\text{\rm h}}}
\newcounter{mycount}
\newenvironment{ilist}{\begin{list}{\rm(\roman{mycount})}%
   {\usecounter{mycount}\leftmargin=9mm\itemindent=0pt\labelwidth=9mm\itemsep 0pt}}{\end{list}}
\newcommand{\df}[1]{\textbf{\boldmath #1}}
\begin{document}

\author[Holroyd]{Alexander E.\ Holroyd}
\address{Alexander E.\ Holroyd, Microsoft Research,
1 Microsoft Way, Redmond, WA 98052, USA}
\email{holroyd at microsoft.com}
\urladdr{\url{http://research.microsoft.com/~holroyd/}}

\author[Liggett]{Thomas M.\ Liggett}
\address{Thomas M.\ Liggett, Department of Mathematics,
University of California, Los Angeles, CA 90095, USA}
\email{tml at math.ucla.edu}
\urladdr{\url{http://www.math.ucla.edu/~tml/}}

\keywords{Proper coloring, one-dependence, $m$-dependence,
stationary process, block-factor, hidden Markov process,
hard-core process.}

\subjclass[2010]{60G10; 05C15; 60C05}

\vspace{-1cm} \maketitle

\vspace{-3mm} {\centering\em Dedicated to Oded Schramm, 10 December 1961 -- 1
September 2008\\}

\begin{abstract}
We prove that proper coloring distinguishes between
block-factors and finitely dependent stationary
processes.  A stochastic process is finitely dependent if
variables at sufficiently well-separated locations are
independent; it is a block-factor if it can be expressed
as an equivariant finite-range function of independent
variables.  The problem of finding non-block-factor
finitely dependent processes dates back to 1965.  The
first published example appeared in 1993, and we provide
arguably the first natural examples.  More precisely,
Schramm proved in 2008 that no stationary $1$-dependent
$3$-coloring of the integers exists, and conjectured that
no stationary $k$-dependent $q$-coloring exists for any
$k$ and $q$. We disprove this by constructing a
$1$-dependent $4$-coloring and a $2$-dependent
$3$-coloring, thus resolving the question for all $k$ and
$q$.

Our construction is canonical and natural, yet very
different from all previous schemes. In its pure form it
yields precisely the two finitely dependent colorings
mentioned above, and no others.  The processes provide
unexpected connections between extremal cases of the
Lov\'asz local lemma and descent and peak sets of random
permutations. Neither coloring can be expressed as a
block-factor, nor as a function of a finite-state Markov
chain; indeed, no stationary finitely dependent coloring
can be so expressed.  We deduce extensions involving $d$
dimensions and shifts of finite type; in fact, any
non-degenerate shift of finite type also distinguishes
between block-factors and finitely dependent processes.
\end{abstract}

\section{Introduction}
%%%%%%%%%%%%%%%%%%%%%%%%%%%%%%%%%%%%%%%%%%%%%%%%%%%%%%%%%%%%%%%%%%%%%%%%%%%%%%%

Central to probability and ergodic theory is the notion of
mixing in various forms.  A stochastic process is a family
of random variables indexed by a metric space, and mixing
means that variables at distant locations are approximately
independent.  The strongest and simplest mixing condition
is finite dependence, which states that subsets of
variables are independent provided they are at least some
fixed distance apart. Despite the simplicity of the
definition, finite dependence turns out to be rather
subtle.  Finitely dependent processes arise in the context
of classical limit theorems
\cite{hoeffding-robbins,ibragimov-linnik-2,heinrich-1990,janson-deg},
renormalization of statistical physics models
\cite{obrien,lss}, and the Lov\'{a}sz local lemma
\cite{lovasz,alon-spencer}, a fundamental tool of
probabilistic combinatorics.

A key problem, originating from work of Ibragimov and
Linnik in 1965 \cite{ibragimov-linnik,ibragimov-linnik-2},
has been to understand the relationship between finite
dependence and block-factors.  A block-factor is a process
that can be expressed as a function of an underlying family
of independent random variables, where the function is
finite-range and commutes with the action of a transitive
symmetry group. It is clear that a block-factor is finitely
dependent; it is natural to ask about the converse
implication.

This question retains its interest and subtlety even in the
simplest setting of stochastic processes indexed by the
integer line.  (We return to more general settings later.)
We say that a stochastic process $X=(X_i)_{i\in\Z}$ is
\df{$k$-dependent} if the random sequences
$(\ldots,X_{i-2},X_{i-1})$ and $(X_{i+k},X_{i+k+1},\ldots)$
are independent of each other, for each $i\in\Z$; if $X$ is
$k$-dependent for some integer $k$ then it is \df{finitely
dependent}.  A process $X$ is \df{stationary} if
$(X_i)_{i\in \Z}$ and $(X_{i+1})_{i\in \Z}$ are equal in
law.  A process $X$ is an \df{$r$-block-factor} (of an
i.i.d.\ process) if for some i.i.d.\ $(U_i)_{i\in\Z}$ and
some measurable $f$ we have
$X_i=f(U_{i+1},U_{i+2},\ldots,U_{i+r})$ for each $i$. (The
random variables $U_i$ can be assumed uniform on $[0,1]$
without loss of generality.)

An $r$-block-factor is clearly stationary and
$(r-1)$-dependent.  Ibragimov and Linnik
\cite{ibragimov-linnik,ibragimov-linnik-2} proved in 1965
that the converse implication holds for Gaussian processes,
and claimed without proof that it is false in general. This
question was explicitly stated as open by G\"otze and Hipp
\cite{gotze-hipp} and Janson \cite{janson-84}.  It was not
resolved until 1989, when Aaronson, Gilat, Keane and de
Valk \cite{aaronson-gilat-keane-devalk} gave a family of
$1$-dependent processes that are not $2$-block-factors.
This construction is indirect and algebraic, and the
authors asked for more natural examples. This question and
the surrounding issues have been taken up by a number of
authors \cite{aaronson-gilat-keane,burton-goulet-meester,
matus-1996,matus-1998,haiman,devalk-1989,
gandolfi-keane-devalk,ruschendorf-devalk,
janson-83,janson-deg,devalk,hoeffding-robbins,lss,broman},
and various further examples have been constructed.
Highlights include an explicit $1$-dependent ($5$-state)
Markov chain that is not a $2$-block factor
\cite{aaronson-gilat-keane}, a (hidden-Markov)
$1$-dependent process that is not an $r$-block-factor for
any $r$ (Burton, Goulet and Meester, 1993
\cite{burton-goulet-meester}), and a ``perturbable''
example showing that $2$-block-factors are not dense in the
set of $1$-dependent Markov chains \cite{matus-1996}.

The constructions mentioned above are intricate, subtle and
counterintuitive, but the resulting examples have the
appearance of technical ones specifically constructed for
the purpose. For instance, Borodin, Diaconis and Fulton
\cite{adding} remarked in 2010: `it appears that most
``natural'' one-dependent processes are two-block factors'.
This issue has practical implications: several authors
\cite{gotze-hipp,janson-84,heinrich-1990} have been forced
to assume a block-factor representation as an additional
assumption in the study of finitely dependent processes: if
natural finitely dependent processes are block factors,
then there is little to be lost by such an assumption.

In this article we provide arguably the first genuinely
natural finitely dependent stationary process that is not a
block-factor.  Moreover, we establish something much
stronger, which runs entirely counter to the above ideas
about natural processes. Suppose that we impose any fixed
system of local constraints on a stochastic process.
(Formally, we require the process to belong almost surely
to a shift of finite type.)  Provided the constraints
satisfy certain simple non-degeneracy conditions, we show
that they can be satisfied by a stationary finitely
dependent process, but not by any block-factor. The latter
negative statement follows from ideas of Ramsey theory --
our main contribution is the former positive statement.
Underlying this is a remarkable new stochastic process that
is natural and canonical, yet apparently quite different
from all previously studied classes of stochastic
processes. It has many surprising properties that hint at a
deeper theory. In particular, certain marginal projections
provide unexpected links between known processes involving
descent and peak sets of random permutations, Dyck words,
and extremal cases of the Lov\'{a}sz local lemma.

Proper coloring is a canonical choice of local constraint,
which turns out to be the key to the general case. We call
a stochastic process $X=(X_i)_{i\in \Z}$ a
\df{$q$-coloring} (of $\Z$) if each $X_i$ takes values in
$\{1,\ldots,q\}$, and almost surely we have $X_i\neq
X_{i+1}$ for all $i\in\Z$.  For which $k$ and $q$ does
there exist a stationary $k$-dependent $q$-coloring of
$\Z$?  This question arose from discussions between Itai
Benjamini, Alexander Holroyd and Benjamin Weiss in early
2008.  In addition to its implications in relation to block
factors, it is a formulation of the very natural question:
do local constraints demand global organization?  It can
also be seen a question about spontaneous
symmetry-breaking. Oded Schramm proved a negative answer in
the first non-trivial case: there is no stationary
$1$-dependent $3$-coloring. The proof appears in
\cite{hsw}; we will give different proof, which provides
some further information. Schramm conjectured that no
stationary $k$-dependent $q$-coloring exists for any $k$
and $q$. We disprove this.

\enlargethispage*{1cm}
\begin{thm}\label{main}
There exist a stationary $1$-dependent $4$-coloring of
$\Z$, and a stationary $2$-dependent $3$-coloring of $\Z$.
\end{thm}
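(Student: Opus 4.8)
The plan is to pass to the finite-dimensional distributions and to build them by an explicit recursion. By the Kolmogorov extension theorem, a stationary $k$-dependent $q$-coloring of $\Z$ is exactly a function $P$ that assigns to every finite proper word $w=x_1\cdots x_n$ (with $x_i\in\{1,\dots,q\}$ and $x_i\ne x_{i+1}$) a number $P(w)\in[0,1]$, with $P(\varnothing)=1$, satisfying the consistency relations
\[
\sum_{a\ne x_1}P(a\,x_1\cdots x_n)=P(x_1\cdots x_n)=\sum_{b\ne x_n}P(x_1\cdots x_n\, b),
\]
together with the $k$-dependence identity $\sum_{y_1,\dots,y_k}P(u\,y_1\cdots y_k\,v)=P(u)\,P(v)$ for all proper words of that form (here $k=1$, and $k=2$, respectively). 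One sets $P\equiv 0$ on words that are not proper colorings; then the cylinder probabilities $\mu(X_a=x_a,\dots,X_b=x_b):=P(x_a\cdots x_b)$ form a consistent shift-invariant family, and the resulting process is the desired coloring --- stationarity is automatic because $P$ depends only on the word, and $k$-dependence follows from the factorization identity together with the fact that interval cylinders generate the relevant $\sigma$-algebras. So the whole theorem reduces to producing two functions $P$, one for $(q,k)=(4,1)$ and one for $(q,k)=(3,2)$.

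Constructing $P$ is the substance of the argument. I would search for $P$ of the form $P(x_1\cdots x_n)=c_n\cdot\mathrm{wt}(x_1\cdots x_n)$, where $c_n$ depends only on the length and $\mathrm{wt}$ is a nonnegative combinatorial weight that behaves multiplicatively across the word, since only such a structure has a chance of making $\sum_{y}P(u\,y\cdots v)$ collapse to $P(u)P(v)$. The pointers in the introduction --- extremal instances of the Lov\'asz local lemma, the descent and peak sets of uniformly random permutations, Dyck words, and the hard-core threshold $\ph$ singled out among the keywords --- strongly suggest that $\mathrm{wt}(w)$ should count combinatorial objects compatible with the ascent/descent pattern of $w$ (lattice paths, or permutations with a prescribed descent set), and that the two colorings should appear as a degenerate, critical member of a one-parameter family of stationary $k$-dependent processes rather than as isolated constructions. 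Concretely, I would posit a recursion that removes the last letter of $w$, expressing $P(x_1\cdots x_n)$ as a weighted sum of $P$-values of shorter words --- perhaps a sum over the ways to split $w$ at a first ``return'' or first descent, which would make the factorization identity essentially built in --- and I would choose the recursion coefficients so that the numbers produced are visibly nonnegative. One then verifies $P(\varnothing)=1$, the consistency relations, and the $k$-dependence identity by a single induction on $|w|$, so that consistency and stationarity become bookkeeping and nonnegativity is read off the coefficients.

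The genuinely hard step is finding this recursion (equivalently, the closed form of $P$): there is no obvious candidate, and, as the abstract warns, the process is unlike previously studied ones, so the guess cannot simply be lifted from an existing construction. Compounding this is a rigidity: each of the two requirements --- nonnegativity of $P$, and the $k$-dependence factorization --- is individually plausible to arrange, but arranging both at once is highly constrained, and the construction must in effect encode Schramm's impossibility result, in that the same recipe applied with parameters $(q,k)=(3,1)$ or $(2,2)$ must fail to produce a probability measure. I would use Schramm's theorem as a running sanity check: any proposed $P$ that still yields a stationary $1$-dependent $3$-coloring is wrong. Once the correct $P$ is in hand I expect the verification to be essentially mechanical --- the factorization identity will hold precisely because the recursion was engineered for it --- so that virtually all of the difficulty, and all of the novelty, is concentrated in the construction itself.
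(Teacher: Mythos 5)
Your framing is right as far as it goes: reduce the theorem to exhibiting a function $P$ on proper words satisfying $P(\varnothing)=1$, nonnegativity, the one-step consistency relations, and the factorization identity $\sum_{y}P(u\,y\cdots v)=P(u)P(v)$, then invoke Kolmogorov extension. That is exactly the paper's reduction. You also correctly anticipate that $P$ should have the form $c_n\cdot\mathrm{wt}(w)$ with $\mathrm{wt}$ a \emph{manifestly} nonnegative combinatorial count (the paper notes it first tried a signed formula and could not prove positivity, then switched to a counting definition), and you correctly identify that the entire weight of the theorem lies in producing such a $\mathrm{wt}$.

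But that is where the proposal stops, and that is the theorem. You do not produce $\mathrm{wt}$, and the specific object the paper uses is not something your heuristics lead to. The paper sets $\mathrm{wt}(x)=B(x)$, the number of permutations $\sigma\in S_n$ (``proper buildings'') with the property that for each $t$ the subsequence of $x$ consisting of symbols that arrived by time $t$ (read in original order, not arrival order) is a proper coloring. This is not a descent/Dyck-path statistic of $x$ --- those connections (\cref{properties}(ii)--(iv)) are downstream consequences, not the definition --- and your suggested ``split at first descent/return'' recursion is not the right one. The recursion that makes everything work is \cref{build-rec}: $B(x)=\sum_{i=1}^n B(\hat x_i)$, a sum over \emph{all} single-symbol deletions, obtained by conditioning on the position of the last arrival (not the last letter). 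From this one must then prove the two nontrivial identities, \cref{build-cons} ($\sum_a B(xa)=[n(q-2)+q]B(x)$ for all $q$) and \cref{build-dep} (the factorizations $\sum_a B(xay)=2\binom{m+n+2}{m+1}B(x)B(y)$ for $q=4$ and $\sum_{a,b}B(xaby)=2\binom{m+n+4}{m+2}B(x)B(y)$ for $q=3$), by somewhat delicate inductions that exploit the specific values $q=3,4$ in ways that have no analogue for other $q$. You assert the verification ``will be essentially mechanical,'' but \cref{build-dep} is genuinely the crux: it is where $q=3,4$ enter, and the inductive case analysis is where the argument could fail for a wrong guess of $\mathrm{wt}$. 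Without the definition of $B$, the deletion recursion, and the proof of \cref{build-dep}, there is no proof --- only a correct statement of what would need to be proved.
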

On the other hand, we have the following.
\begin{prop}\label{block-factor}
No $r$-block-factor $q$-coloring exists, for any $r$ and
$q$.
\end{prop}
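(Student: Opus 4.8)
The plan is to deduce \Cref{block-factor} from a self‑strengthening phrased in terms of \emph{shifts of finite type}, which is the form that survives an induction on the block length. Call a $1$‑step shift of finite type $Y\subseteq\Sigma^\Z$ over a finite alphabet $\Sigma$ (specified by a set of allowed transitions $a\to b$) \df{loopless} if no transition $a\to a$ is allowed; equivalently, no element of $Y$ takes the same value at two consecutive sites. A $q$‑coloring is precisely a process lying almost surely in the loopless $1$‑step shift of finite type over $\Sigma=\{1,\dots,q\}$ whose allowed transitions are all pairs $a\to b$ with $a\ne b$. So it suffices to prove:
\begin{quote}
\emph{Lemma.} For every $m\ge 1$, no $m$‑block‑factor of an i.i.d.\ sequence (which we may take uniform on $[0,1]$) lies almost surely in a loopless $1$‑step shift of finite type.
\end{quote}

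I would prove the Lemma by induction on $m$. The base case $m=1$ is immediate: a $1$‑block‑factor $Z_i=h(V_i)$ is i.i.d.\ with some marginal $\nu$ on $\Sigma$, and if it lies a.s.\ in $Y$ then $\P(Z_0\to Z_1\text{ allowed})=1$, which forces $a\to b$ to be allowed whenever $\nu(a)\nu(b)>0$; taking $a=b$ in the nonempty support of $\nu$ contradicts looplessness. For the inductive step, let $Z_i=h(V_{i+1},\dots,V_{i+m})$ be an $m$‑block‑factor lying a.s.\ in a loopless $Y$ over $\Sigma$. For $w\in[0,1]^{m-1}$ let $A(w)\subseteq\Sigma$ be the essential range of $v\mapsto h(v,w)$ and $B(w)\subseteq\Sigma$ the essential range of $v\mapsto h(w,v)$; these are nonempty finite sets, the maps $w\mapsto A(w)$ and $w\mapsto B(w)$ are measurable, and $Z_i\in A(\omega_{i+1})\cap B(\omega_i)$ almost surely, where $\omega_i:=(V_{i+1},\dots,V_{i+m-1})$. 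The crucial observation is that $A(w)\cap B(w)=\emptyset$ for a.e.\ $w$: otherwise there is a positive‑measure set of $w$ on which some fixed $c$ lies in $A(w)\cap B(w)$, and then $\P(Z_i=Z_{i+1}=c)>0$ (condition on the overlap $\omega$ shared by the two windows, and note that each of $h(V,\omega)=c$ and $h(\omega,V')=c$ then has positive probability), so the transition $c\to c$ would be allowed with positive probability — contradicting looplessness. Now put $Z'_i:=(A(\omega_i),B(\omega_i))$, an $(m-1)$‑block‑factor of the same i.i.d.\ sequence, valued in the finite alphabet of pairs of subsets of $\Sigma$. By construction $Z'$ lies a.s.\ in the $1$‑step shift of finite type whose allowed transitions $(A,B)\to(A',B')$ are those with $B\cap A'\ne\emptyset$ (that intersection contains $Z_i$), and the displayed disjointness says exactly that this new shift of finite type is again loopless. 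This contradicts the Lemma for $m-1$, completing the induction; \Cref{block-factor} is then the case $m=r$.

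The routine part is measure‑theoretic bookkeeping: defining essential ranges so that the various ``almost everywhere'' clauses compose (Fubini, together with the fact that $A$ and $B$ take only finitely many values, so $w\mapsto(A(w),B(w))$ partitions $[0,1]^{m-1}$ into finitely many measurable pieces — a pigeonhole, i.e.\ Ramsey‑flavoured, step). One point worth stressing is that \emph{no discretization of the continuum alphabet is needed} — essential ranges are automatically finite‑valued, which is precisely what makes the reduction from block length $m$ to $m-1$ lose nothing. The conceptual heart, and the step I would be most careful about, is the propagation of looplessness — the claim $A(w)\cap B(w)=\emptyset$ a.e. — since this is what stops the derived process from living in a degenerate (loop‑carrying) shift of finite type and so keeps the induction alive; everything else, including the fact that for $m=2$ the process $Z'$ is i.i.d.\ so that the base case applies, is then automatic.
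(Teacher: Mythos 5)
Your proof is correct and closely parallels the paper's, but with a somewhat different packaging of the induction. Both arguments rest on the same observation: the essential range of a block function given all but one of its arguments depends on one fewer coordinate and takes only finitely many values, so it is itself a shorter block-factor to which the inductive hypothesis applies. The paper proves the explicit positivity claim $\P\bigl[f(U_1,\dots,U_r)=f(U_2,\dots,U_{r+1})\bigr]>0$ by induction on $r$, using a single essential-range map $S$ (last coordinate varying); the step applies the hypothesis to $S$ to find a positive-probability event on which $S(U_1,\dots,U_{r-1})=S(U_2,\dots,U_r)$, then concludes with a short conditioning argument using independence of $U_{r+1}$. You instead formulate the invariant as ``no $m$-block-factor lies a.s.\ in a loopless $1$-step shift of finite type,'' reduce via the pair $(A,B)$ of essential ranges (first and last coordinate varying), and establish the disjointness $A(w)\cap B(w)=\emptyset$ a.e.\ from looplessness so that the derived $(m-1)$-block-factor again lands in a loopless SFT. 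Your version carries a little more machinery (two functions rather than one, an explicitly constructed derived SFT), but it makes the self-reproducing Ramsey-type step visible and is stated in the generality that dovetails with the SFT extension discussed around \cref{sft}. In the end the two arguments establish exactly the same fact, since $\P(Z_0=Z_1)>0$ for a block-factor $Z$ is precisely the failure of $Z$ to lie a.s.\ in any loopless $1$-step SFT.
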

\cref{main,block-factor} together provide perhaps the
cleanest answer one could hope for to the question raised
by Ibragimov and Linnik:

\begin{trivlist}\em\item
Coloring can be done by a
stationary $1$-dependent process, but not by a
block-factor.
\end{trivlist}

Moreover, since it is easily seen that no stationary
finitely dependent $2$-coloring exists, Schramm's
impossibility result and \cref{main} together provide a
complete answer to the above question about $k$-dependent
$q$-colorings.  In fact, there is a canonical construction
that gives precisely the two required cases
$(k,q)=(1,4),(2,3)$ in \cref{main}, and no others. To our
knowledge, \cref{main} also provides the first stationary
finitely dependent non-block-factor that is symmetric under
permutations of the symbols, and the first stationary
$1$-dependent process that is not hidden-Markov. (See below
for details.)

We do not claim \cref{block-factor} as new, although it
does not appear to be particularly well known in this form.
An essentially equivalent result appears in \cite{naor} (in
a stronger, quantitative form, stated in rather different
terms motivated by applications in distributed computing,
and building on earlier work in \cite{linial}). Further
extensions and applications appear in \cite{hsw,alon}. For
the reader's convenience we provide a simple proof of
\cref{block-factor}.

Given the prominence of Markov chains in the literature on
finitely dependent processes, it is natural to ask whether
our colorings are Markov. They are not, and much more can
be said. We call a stationary process $X$
\df{hidden-Markov} if there exists a stationary Markov
chain $M=(M_i)_{i\in\Z}$ on a finite state space, and a
deterministic function $f$, such that $X_i=f(M_i)$ for all
$i$. (In contrast with the definition of block-factors,
here finiteness of the state space is important: if we were
to allow an uncountable state space then {\em any}
stationary process $X$ could be represented this way, by
taking $M_i=(\ldots,X_{i-1},X_i)$.)  Note that
hidden-Markov processes include $m$-step Markov processes,
as well as Gibbs measures with local interactions. The
following is a previously unpublished result of Schramm, of
which we present a proof.

\begin{prop}[Schramm]\label{hidden}
No hidden-Markov finitely dependent $q$-coloring exists,
for any $q$.
\end{prop}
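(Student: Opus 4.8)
The plan is to argue by contradiction, reducing to a finite-dimensional linear-algebra problem through transfer matrices, with a trace identity in which properness is decisive. Suppose $X=f(M)$ is a stationary $k$-dependent proper $q$-coloring, with $M$ a stationary Markov chain on a finite set $S$, transition matrix $P$, stationary law $\pi$, and $f\colon S\to\{1,\dots,q\}$. First reduce to the case that $M$ is irreducible: the communicating class of $M_0$ is shift-invariant, so by ergodic decomposition we may condition on it, which preserves stationarity, the Markov property, properness and $k$-dependence; then discard states outside the support of $\pi$. Writing $D_x$ for the diagonal matrix with entries $\ind[f(s)=x]$, $s\in S$, one has $\P(X_0=x_0,\dots,X_n=x_n)=\pi D_{x_0}PD_{x_1}P\cdots PD_{x_n}\mathbf{1}$, and properness is exactly the identity $D_xPD_x=0$ for every colour $x$ (in particular $\trace P=0$, as $M$ has no self-loops).

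Introduce two subspaces: $\mathcal U$, the span of the unnormalized conditional laws $s\mapsto\P(X_{[-n,-1]}=C,\ M_0=s)$ of $M_0$ given a finite past; and $\mathcal V$, the span of the vectors $V_D$ with $V_D(t)=\P(X_{[0,\ell]}=D\mid M_0=t)$. Then $\mathcal UP\subseteq\mathcal U\ni\pi$, while $\mathcal V$ contains $\mathbf{1}$ and is invariant under each operator $C_x:=D_xP$ (hence under $P=\sum_xC_x$). Since $\P(X_{[-n,-1]}=C,\ X_{[m,m+\ell]}=D)=\alpha_CP^{m}V_D$ for the obvious $\alpha_C\in\mathcal U$, $k$-dependence is equivalent to $\alpha(P^{n}-\mathbf{1}\pi)\beta=0$ for all $\alpha\in\mathcal U$, $\beta\in\mathcal V$ and $n\ge k$. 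A generating-function argument now upgrades this: the rational function $z\mapsto\alpha(I-zP)^{-1}\beta$ must have its only pole at $z=1$, so $\alpha\,\Pi_\lambda\,\beta=0$ for every eigenvalue $\lambda\notin\{0,1\}$ of $P$, where $\Pi_\lambda$ is the spectral projector. Letting $\Pi$ be the spectral projector onto $F:=\bigoplus_{\lambda\ne0,1}\ker(P-\lambda I)^{|S|}$ — a polynomial in $P$ — this reads $\Pi\mathcal V\subseteq\mathcal V\cap\mathcal U^{\perp}$.

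The key step is a trace computation. Since $C_x^2=D_xPD_xP=0$, each $C_x$, hence $C_x|_{\mathcal V}$, is nilpotent, so $\trace(P|_{\mathcal V})=\sum_x\trace(C_x|_{\mathcal V})=0$. On the other hand $\mathcal V=\Pi\mathcal V\oplus(I-\Pi)\mathcal V$, and $(I-\Pi)\mathcal V\subseteq\mathrm{span}(\mathbf{1})\oplus\ker P^{|S|}$ contains $\mathbf{1}$, so $\trace(P|_{(I-\Pi)\mathcal V})=1$; therefore $\trace(P|_{\Pi\mathcal V})=-1$. In particular $\Pi\mathcal V\ne0$, so $\mathcal V\cap\mathcal U^{\perp}$ contains a nonzero $w\in F$. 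This already finishes two cases. If the filter is full-dimensional ($\mathcal U=\R^S$) then $\mathcal U^{\perp}=0$, forcing $\Pi\mathcal V=0$, a contradiction. And running the same argument on the $\mathcal U$-side — legitimate because properness also gives $D_xP^{\mathsf T}D_x=0$, so the operators $P^{\mathsf T}D_x$ preserving $\mathcal U^{\mathsf T}$ are nilpotent — shows that $\mathcal U$ contains a nonzero left generalized eigenvector of $P$ for some eigenvalue $\mu\notin\{0,1\}$; when $\dim F=1$ this $\mu$ must equal $-1$ and pairs nontrivially with $w$, contradicting $w\in\mathcal U^{\perp}$.

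The remaining case — $\dim F\ge2$ and $\mathcal U$ not full — is where I expect the real difficulty, because there a nonzero $w\in\mathcal V\cap\mathcal U^{\perp}\cap F$ is not excluded by linear algebra alone; it can occur for hidden-Markov processes that are not colorings (for instance the Burton--Goulet--Meester process), so positivity of the representation must be used. The plan there is to return to probability: $w$ corresponds to a window observable $\widetilde h(X_{[0,L]})$ with $\E\widetilde h=0$, $\E[\widetilde h\mid M_0=\cdot]=w$, and $\E[\widetilde h(X_{[0,L]})\mid X_{(-\infty,-1]}]=0$ almost surely, while $w\in F$ means $P^{n}w=\E[\widetilde h(X_{[n,n+L]})\mid M_0=\cdot]$ never collapses onto $\mathrm{span}(\mathbf{1})$; one then exploits recurrence of $M$ — a stationary finite irreducible chain revisits any fixed state at times with arbitrarily large gaps, since the trace identity applied to $M$ itself rules out a finitely dependent finite-state Markov chain that is a coloring — together with the regeneration structure at those visits, to manufacture from $\widetilde h$ a genuine nonvanishing long-range correlation, contradicting $k$-dependence. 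Making this last passage rigorous, and dealing with periodic $M$ (where the analysis of $\mathcal U$ should instead detect that the periodicity class is revealed by neither the past nor the future, hence must be trivial), is the main obstacle.
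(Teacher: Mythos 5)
Your transfer-matrix setup runs faithfully parallel to the paper's Hilbert-space argument, and you have already isolated both of the essential mechanisms: (a) properness gives $D_xPD_x=0$, so each $C_x=D_xP$ is nilpotent and any trace built from the $C_x$ vanishes; and (b) $k$-dependence forces high powers of $P$ to act as the rank-one operator $\mathbf{1}\pi$ when paired between $\mathcal U$ and $\mathcal V$. The genuine gap is not in these ingredients but in the space to which you apply them. You analyze $\mathcal V$ together with its spectral subspaces, and are then stuck exactly in the case $\dim F\ge 2$, $\mathcal U\subsetneq\R^S$, where $\mathcal V\cap\mathcal U^\perp\cap F$ can be nonzero for a general hidden-Markov process. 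The missing idea is to pass to the quotient
\[
\mathcal V':=\mathcal V/(\mathcal V\cap\mathcal U^\perp),
\]
rather than trying to show $\mathcal V\cap\mathcal U^\perp$ is small. You have already observed in effect that $\mathcal U\,D_xP\subseteq\mathcal U$ (append a color to the past), so $\mathcal U^\perp$, and hence $\mathcal V\cap\mathcal U^\perp$, is invariant under each $C_x$ and under $P$; thus $C_x$ and $P=\sum_xC_x$ descend to $\bar C_x,\bar P$ on $\mathcal V'$. Now both mechanisms bite at once, with no case split, no spectral projector $\Pi$, no Perron--Frobenius input, and no irreducibility or periodicity discussion. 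From (a), each $\bar C_x$ is nilpotent, so $\trace(\bar P)=\sum_x\trace(\bar C_x)=0$. From (b), for $n>k$ and $\beta\in\mathcal V$ one has $P^n\beta-(\pi\beta)\mathbf 1\in\mathcal V\cap\mathcal U^\perp$; since $\pi\in\mathcal U$, the scalar $\pi\beta$ depends only on $\bar\beta$, and $\bar{\mathbf 1}\ne 0$ because $\pi\mathbf 1=1$. Hence $\bar P^n$ has rank exactly one and fixes $\bar{\mathbf 1}$ for all $n>k$, so $\bar P$ has a unique nonzero eigenvalue, equal to $1$, giving $\trace(\bar P)=1$ and a contradiction.

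This quotient is exactly the finite-dimensional avatar of the paper's space $U=\overline{P_BA}$. In the paper's notation, $P_CA$ is the span of $\E(\ind[X_{[0,\ell]}=D]\mid M_0)$, i.e.\ your $\mathcal V$; a vector $g\in\mathcal V$ lies in the kernel of $P_B|_{P_CA}$ iff $\E[g(M_0)\mid X_{\le 0}]=0$ a.s., iff $\alpha_Cg=0$ for every finite past $C$, iff $g\in\mathcal U^\perp$. So $U\cong\mathcal V'$, the operator $R=(P_BT)|_U$ is $\bar P$, the orthogonal decomposition of \cref{reduction}(iii) plays the same role as the nilpotency of the $\bar C_x$ in killing the trace, and \cref{reduction}(ii) is the rank-one statement. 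Your route is therefore the same proof written in coordinates; the step you were missing was to quotient by $\mathcal V\cap\mathcal U^\perp$ (the paper's projection $P_B$ does this automatically) rather than to project to spectral subspaces and then wrestle with the leftover piece.
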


In particular, our $4$-coloring provides a partial answer
to a question of de Valk \cite[Problem~8]{devalk}, who
asked whether every $1$-dependent process is a function of
a Markov chain: the answer is no for finite-state chains.
(The case of countable state spaces remains open).

As mentioned earlier, the colorings of \cref{main} have
many remarkable properties, which hint at some deeper
structure. We strongly believe that the stationary
$1$-dependent $4$-coloring is unique.  The next result
gives some of these properties, and also provides a small
step towards uniqueness. Let $\ind[\,\cdot\,]$ denote an
indicator function.

\begin{thm}\label{properties}
The stationary $1$-dependent $4$-coloring $X$ and
$2$-dependent $3$-coloring $Y$ of \cref{main} can be chosen
to have the following additional properties.
\begin{ilist}
  \item The processes are reversible, and
  symmetric under permutations of the colors, i.e.\
$X$ is equal in law to $(X_{-i})_{i\in\Z}$, and to
$(\sigma(X_i))_{i\in \Z}$ for any
  $\sigma\in S_4$, and similarly for $Y$ and $\sigma\in S_3$.
  \item The process $(\ind[X_i=1])_{i\in\Z}$ is equal in law to
      $(\ind[B_i>B_{i+1}])_{i\in\Z}$, where $(B_i)_{i\in \Z}$ are i.i.d.\
      taking values $0,1$ with equal probabilities.
 \item The process $(\ind[X_i\in\{1,2\}])_{i\in\Z}$ is equal
     in law to $(\ind[U_i>U_{i+1}])_{i\in\Z}$, where $(U_i)_{i\in\Z}$ are
     i.i.d.\ uniform on $[0,1]$.
 \item The process $(\ind[Y_i=1])_{i\in\Z}$ is equal
     in law to $(\ind[U_{i-1}{<}U_i{>}U_{i+1}])_{i\in\Z}$, where $(U_i)_{i\in\Z}$ are
     i.i.d.\ uniform on $[0,1]$.
 \item The law of $(Y_1,\ldots,Y_n)$ is the conditional law of $(X_1,\ldots,X_n)$
  given that $X_i\neq 4$ for $i=1,\ldots,n$.
\end{ilist}
Every stationary $1$-dependent $4$-coloring $X$ satisfies
\textrm{(ii)}.
\end{thm}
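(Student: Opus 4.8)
The plan is to reduce (ii) to two structural facts about stationary $1$-dependent $\{0,1\}$-valued processes whose $1$'s are isolated (no two adjacent), applied to the processes $Z^c:=(\ind[X_i=c])_{i\in\Z}$ for the colours $c\in\{1,2,3,4\}$, and in particular to $Z:=Z^1$. Each $Z^c$ is of this kind: it is a coordinatewise function of $X$, hence stationary and $1$-dependent, and its $1$'s are isolated because $X_i\ne X_{i+1}$. Write $p_c:=\P(X_1=c)=\P(Z^c_1=1)$ and $p:=p_1$. The goal is then to show that $p=1/4$, and that every stationary $1$-dependent $\{0,1\}$-valued process with isolated $1$'s and density $1/4$ has the law of the two-block-factor appearing in (ii).

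First I would show that the law of any stationary $1$-dependent $\{0,1\}$-valued process $Z$ with isolated $1$'s is determined by the single number $\P(Z_1=1)$. I would prove by induction on $n$ that the law of $(Z_1,\dots,Z_n)$ is a function of $p$. For an admissible pattern $\epsilon=(\epsilon_1,\dots,\epsilon_{n+1})$: if $\epsilon_{n+1}=1$ then $\epsilon_n=0$ is forced, and since $\{Z_{n+1}=1\}\subseteq\{Z_n=0\}$ while $(Z_j)_{j\le n-1}$ is independent of $Z_{n+1}$ by $1$-dependence, one gets $\P(Z=\epsilon)=p\cdot\P\big((Z_1,\dots,Z_{n-1})=(\epsilon_1,\dots,\epsilon_{n-1})\big)$, known by induction; if instead $\epsilon_{n+1}=0$, then $\P(Z=\epsilon)$ equals $\P\big((Z_1,\dots,Z_n)=(\epsilon_1,\dots,\epsilon_n)\big)$ minus a term that is either $0$ (when $\epsilon_n=1$) or falls under the previous case. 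Specializing to $\epsilon\equiv 0$ yields the recursion $a_n=a_{n-1}-p\,a_{n-2}$, with $a_0=1$ and $a_1=1-p$, for $a_n:=\P(Z_1=\cdots=Z_n=0)$.

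Next I would use this recursion to force $p\le 1/4$. If $p>1/4$, the characteristic roots $\tfrac12\pm\tfrac{i}{2}\sqrt{4p-1}$ are non-real and lie in the open first quadrant, so $a_n=\rho\,(\sqrt p)^{\,n}\cos(n\theta-\phi)$ for some $\rho>0$ and $\theta\in(0,\tfrac{\pi}{2})$; since the increments $\theta$ are smaller than $\pi$, the sequence $n\theta-\phi$ cannot skip the arc on which cosine is negative, so $a_n<0$ for some $n$, which is impossible. Applying this to every colour gives $p_c\le\tfrac14$ for all $c$, and since $p_1+p_2+p_3+p_4=1$ this forces $p_c=\tfrac14$ for each $c$; in particular $p=\P(X_1=1)=\tfrac14$. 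Finally, I would check that $D_i:=\ind[B_i>B_{i+1}]$, with $(B_i)_{i\in\Z}$ i.i.d.\ taking the values $0,1$ with equal probability, is stationary, $1$-dependent (a two-block-factor), has isolated $1$'s, and satisfies $\P(D_1=1)=\P(B_1=1,B_2=0)=\tfrac14$; by the first step $(D_i)$ then has the same law as $Z$, which is precisely assertion (ii).

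I expect the combinatorial bookkeeping in the first step (the induction over patterns) to be the most tedious part, but the genuine crux is the positivity argument of the second step: it is the interplay between $a_n\ge 0$ and the second-order recursion --- rather than any feature special to colourings --- that pins the density to $1/4$, and the same mechanism shows that the densities achievable by such processes lie in $[0,1/4]$.
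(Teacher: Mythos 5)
Your proposal addresses only part~(ii) of the theorem (together with the final sentence asserting that every stationary $1$-dependent $4$-coloring has this property). Parts~(i), (iii), (iv), and~(v) are not touched: the paper obtains (i) and~(v) directly from the symmetry of the building count $B(x)$, proves~(iv) via a separate recurrence argument comparing $P_3$ with peak-set probabilities, and establishes~(iii) only after proving the entirely separate alternative formula of \cref{alt}. As a proof of the full theorem, this is a substantial omission.

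Within part~(ii), however, your argument is correct and takes a genuinely different route from the paper's. The paper first shows (crediting Fuxi Zhang) that $J=(\ind[X_i=1])$ is a renewal process, computes the probability generating function $G(s)=ps^2/(1-s+ps^2)$ of the renewal time, invokes Pringsheim's theorem to conclude $p\leq 1/4$, and finally observes that at $p=1/4$ the PGF factorizes as $(s/(2-s))^2$. You instead prove by a direct induction on cylinder patterns that the law of any stationary $1$-dependent $\{0,1\}$-valued process with isolated $1$'s is determined by the single parameter $p$, extract the scalar recursion $a_{n+1}=a_n-p\,a_{n-1}$ for $a_n=\P(Z_1=\cdots=Z_n=0)$, and deduce $p\leq 1/4$ by the oscillation argument: for $p>1/4$ the characteristic roots are a non-real conjugate pair of modulus $\sqrt p$ and argument $\theta\in(0,\pi/2)$, so $a_n=\rho\,p^{n/2}\cos(n\theta-\phi)$ with $\rho>0$, and since the step $\theta<\pi$ cannot clear the length-$\pi$ arc where cosine is negative, some $a_n<0$. (One should remark that a step landing exactly on the boundary gives $a_n=0$, which then forces $a_{n-1}=0$ via the recursion and ultimately $p=1$, absurd; but this is a trivial repair.) Your route is more elementary and self-contained than the paper's: it dispenses both with the renewal-process interpretation and with Pringsheim's theorem, replacing the latter by a hands-on analysis of a second-order linear recursion. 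The price is that one does not get the explicit renewal-time PGF, which the paper also uses to give a quick alternative disproof of the $1$-dependent $3$-coloring and to frame the uniqueness discussion. Your final identification of the target law as the block-factor $\ind[B_i>B_{i+1}]$ (check stationarity, $1$-dependence, isolated $1$'s, density $1/4$, then invoke uniqueness-given-$p$) is clean and correct.
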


The processes in (ii)--(iv) above are evidently
block-factors, notwithstanding \cref{block-factor}.  Many
of these properties are mysterious.  It is not clear why
conditioning a $1$-dependent $4$-coloring to have no $4$'s
should be expected to give a $2$-dependent process, as in
(v). We have no simple explanation for the striking
similarity between (iii) and (iv) (even bearing in mind
(v)).  It appears difficult to think of {\em any} processes
satisfying the properties above, or even certain subsets of
them. For example, we know of no other ergodic process $X$
that satisfies (i) and (ii), nor that satisfies the
analogue of (iii) for every $2$-element subset of
$\{1,2,3,4\}$.  It appears plausible that some such sets of
properties may uniquely characterize the processes.

The processes in (ii)--(iv) have been studied extensively
in other settings; (ii) is the unique extremal case of the
Lov\'asz local lemma (see \cite{scott-sokal,scott-sokal2}
and the discussion below), and (iii) and (iv) correspond to
the descent sets and peak sets of random permutations (see
e.g.\ \cite{billey-burdzy-sagan} and references therein).
The colorings $X$ and $Y$ can be seen as couplings of
multiple copies of these processes (with special
properties).

We will prove \cref{main} by giving expressions for
cylinder probabilities (i.e.\ for the probability that
$(X_1,\ldots,X_n)$ takes any given value) in terms of a
certain combinatorial structure.  The expressions are
simple but mysterious, and seem {\em a priori} very hard to
guess.  In the case of the $4$-coloring, we will prove that
the expression is equal to a very different (and more
complicated) expression (an alternating sum of numbers of
linear extensions of certain posets), which is useful for
deducing certain properties including
\cref{properties}(iii) above.  The equality of the two
expressions also implies many interesting new combinatorial
identities.  We in fact started with the more complicated
expression (which was guessed by considering the
constraints imposed on a $4$-coloring by $1$-dependence),
but were unable to prove its nonnegativity directly.  We
were led to the simple expression by searching for
recursions satisfied by the complicated one.

We now consider generalizations to higher dimensions, and
to general systems of local constraints (as mentioned
earlier). Firstly, let $G=(V,E)$ be a graph. A stochastic
process $X=(X_v)_{v\in V}$ indexed by the vertices is
called a \df{$q$-coloring} if each $X_v$ takes values in
$\{1,\ldots,q\}$ and almost surely $X_u\neq X_v$ whenever
$u$ and $v$ are neighbors.  It is \df{$k$-dependent} if its
restrictions to two subsets of $V$ are independent whenever
the subsets are at graph-distance greater than $k$ from
each other. The hypercubic lattice is the graph with vertex
set $\Z^d$ and an edge between $u$ and $v$ whenever
$\|u-v\|_1=1$; the graph itself is also denoted $\Z^d$. A
process on $\Z^d$ is \df{stationary} if it is invariant in
law under all translations of $\Z^d$.
\begin{cor}\label{zd}  Let $d\geq 2$.
 There exist integers $q=q(d)$ and $k=k(d)$ such
that:
  \begin{ilist}
    \item there exists a stationary $1$-dependent $q$-coloring of $\Z^d$;
    \item there exists a stationary $k$-dependent $4$-coloring of $\Z^d$.
  \end{ilist}
\end{cor}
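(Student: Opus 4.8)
The plan is to deduce both statements from the extension of the main construction to shifts of finite type. For each $q$, the proper $q$-colorings of $\mathbb{Z}^d$ form a $\mathbb{Z}^d$-shift of finite type $\Omega_q$, whose defining rule merely forbids equal symbols across each of the $2d$ nearest-neighbour pairs. The input we need is that a suitably non-degenerate shift of finite type carries a stationary finitely dependent process, and that --- exactly as on $\mathbb{Z}$, where $q=4$ succeeds but $q=3$ only with $2$-dependence --- a stronger form of non-degeneracy forces that process to be $1$-dependent. Part (i) then follows by taking $q=q(d)$ large enough that $\Omega_q$ meets the stronger condition (a large palette leaves abundant room for local recolouring). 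Part (ii) follows by applying the result to $\Omega_4$: since $\mathbb{Z}^d$ is bipartite, $4$-colorings exist in profusion and admit local moves --- for instance the proper $2$-coloring by parity can be altered at any single vertex --- so $\Omega_4$ is non-degenerate for $d\ge 2$, though perhaps not non-degenerate enough to force $1$-dependence, which is why the statement asks only for $k(d)$-dependence there.

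The reduction to an SFT statement is itself most naturally organised as a recursion in the dimension. Splitting off the last coordinate realises $\mathbb{Z}^d$ as an infinite stack of copies of $\mathbb{Z}^{d-1}$, consecutive copies joined by a perfect matching, so a stationary $1$-dependent $q$-coloring of $\mathbb{Z}^d$ is the same data as a stationary $\mathbb{Z}$-indexed family $(c^{(i)})_{i\in\mathbb{Z}}$ of $1$-dependent $q$-colorings of $\mathbb{Z}^{d-1}$ that disagree on neighbouring levels, $c^{(i)}(v)\ne c^{(i+1)}(v)$ for all $v$, and that is jointly $1$-dependent as a process on $\mathbb{Z}^{d-1}\times\mathbb{Z}$. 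The geometry is favourable: if $A,B\subset\mathbb{Z}^d$ are at $\ell^1$-distance at least $2$ then on every level $i$ the slices $A_i,B_i\subset\mathbb{Z}^{d-1}$ are disjoint, and are either far apart in $\mathbb{Z}^{d-1}$ or one of them is empty --- a fact that enters the propagation of $1$-dependence up one dimension. The base case $d=1$ is \cref{main}. What the recursion does \emph{not} trivialise is the construction of the family $(c^{(i)})$: consecutive levels cannot be independent, because they must disagree everywhere, so one genuinely requires a finitely dependent coloring of a (now infinite-alphabet) shift of finite type, not a product of i.i.d.\ copies. For part (ii) one runs a parallel recursion keeping $4$ colors at the cost of a growing dependence range, for instance by slicing into slabs $\mathbb{Z}^{d-1}\times\{0,\dots,t-1\}$ of fixed thickness so as to work with finite alphabets.

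The main obstacle is thus to establish the shift-of-finite-type version of the construction in the generality required --- equivalently, to carry out the $d$-dimensional analogue of the main construction and re-prove that its cylinder probabilities are nonnegative, consistent, and $1$-dependent (or merely finitely dependent, for part (ii)), and to pin down the exact non-degeneracy hypothesis under which $1$-dependence rather than just finite dependence is retained. It bears emphasising why no shortcut works: a coordinatewise product, or a coordinatewise sum modulo $q$, of independent copies of the $\mathbb{Z}$-colorings is \emph{not} finitely dependent on $\mathbb{Z}^d$ --- two vertices at $\ell^1$-distance $2$ may share a coordinate, or may be forced into disjoint colour classes --- and no coloring of the form $v\mapsto X_{\phi(v)}$ with $\phi\colon\mathbb{Z}^d\to\mathbb{Z}$ a graph homomorphism can be finitely dependent at all, since $\mathbb{Z}^d$ is not quasi-isometric to $\mathbb{Z}$; the process really must be genuinely $d$-dimensional. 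Once that construction and its non-degeneracy criterion are in hand, the remaining steps --- stationarity, the inter-dimensional reduction, and verifying that $\Omega_4$ meets the weaker criterion for $d\ge 2$ --- are routine.
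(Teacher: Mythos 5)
Your proposal does not complete the proof, and it also overlooks a much simpler route that the paper actually takes. For part (i), the paper does not need any $d$-dimensional analogue of the main construction, nor a shift-of-finite-type theory on $\Z^d$. Instead, for each of the $d$ coordinate directions, place an independent copy of the $1$-dependent $4$-coloring of $\Z$ on \emph{each line} $\{a+i e_j : i\in\Z\}$, all copies mutually independent, and color $v\in\Z^d$ by the $d$-tuple of colors it receives from the $d$ lines through it. This is a stationary $4^d$-coloring of $\Z^d$: adjacent vertices lie consecutively on exactly one coordinate line, so they differ in that slot. It is $1$-dependent because if $A,B\subseteq\Z^d$ are at $\ell^1$-distance $\geq 2$, then every coordinate line meets $A$ and $B$ in two sets at $\Z$-distance $\geq 2$, so the per-line restrictions are independent; distinct lines carry independent copies. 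Your dismissal of ``coordinatewise products'' is aimed at a different (indeed failing) construction that uses only $d$ copies of the $\Z$-coloring, one per axis; the construction that works uses a fresh independent copy \emph{per line}, of which there are infinitely many in each direction.

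For part (ii), the paper invokes results of Holroyd, Schramm and Wilson (\cite{hsw}) as a black box: there is a block-factor map taking any range-$m$ coloring of $\Z^d$ (for suitable $m$ depending on $d$) to a proper $4$-coloring, and block-factor maps degrade $k$-dependence only by a bounded amount (\cref{block-k-dep}). Combined with the per-line construction generalized to range $m$ (\cref{range-m}), this gives a stationary $k(d)$-dependent $4$-coloring. Your slab-recursion idea would need all of this worked out from scratch, and in particular you would need to establish the $d$-dimensional analogue of the nonnegativity/consistency/finite-dependence argument, which you yourself flag as ``the main obstacle'' and do not supply. As written, your argument reduces the statement to an unestablished and genuinely harder lemma rather than proving it. Your observation that $\phi$-pullbacks along graph homomorphisms to $\Z$ cannot work is correct but not an obstacle to the paper's approach, which never tries such a pullback.
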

No stationary $k$-dependent $q$-coloring of $\Z^d$ was
previously known to exist for any $k,q,d$.  The proof of
\cref{zd} yields explicit upper bounds on $q(d)$ and
$k(d)$, but we do not expect them to be close to optimal.
In particular we can take $q(d)=4^d$ in (i). (See
\cref{bounds} below for some lower bounds.) Both assertions
are consequences of \cref{main}; (i) is straightforward to
deduce, while (ii) uses results of Holroyd, Schramm and
Wilson \cite{hsw} that were developed for the study of
finitary factor colorings. While the colorings of \cref{zd}
are stationary under translations, we do not know how to
make them invariant under all isomeries of $\Z^d$.  By
another result in \cite{hsw}, the $4$ colors in (ii) cannot
be reduced to $3$ for any $d\geq 2$.

To describe our second extension we generalize from proper
coloring to arbitrary local constraints.  Write
$[q]:=\{1,\ldots,q\}$.  A \df{shift of finite type} on $\Z$
is a (deterministic) set of sequences $S\subseteq[q]^\Z$
characterized by an integer $m$ and a set $W\subseteq[q]^m$
of allowed local patterns as follows:
$$S=S(q,m,W):=\bigl\{ x\in [q]^\Z:
(x_{i+1},\ldots,x_{i+m})\in W\;\forall i\in \Z\bigr\}.$$
 For $w\in W$, let $T(w)$ be the set of times at which the
pattern $w$ can recur, i.e.\ the set of $t\geq 1$ for which
there exists $x\in S$ with $(x_1,\ldots,x_m)$ and
$(x_{t+1},\ldots,x_{t+m})$ both equal to $w$.  We call the
shift of finite type \df{non-lattice} if there exists $w\in
W$ for which $T(w)$ has greatest common divisor $1$. For
example, the set of all deterministic proper $q$-colorings
of $\Z$ is a shift of finite type, and is non-lattice if
and only if $q\geq 3$.  The following is again a
consequence of \cref{main} together with results from
\cite{hsw}.
\begin{cor}\label{sft}
Let $S$ be a non-lattice shift of finite type on $\Z$.
There exists an integer $k$ (depending on $S$) and a
stationary $k$-dependent process $X$ such that the random
sequence $X$ belongs to $S$ almost surely.
\end{cor}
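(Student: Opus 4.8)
The plan is to obtain $X$ as a finite-range equivariant function (a sliding-block code) of one of the finitely dependent colorings of \cref{main}. A finite-range equivariant function of a stationary $k_0$-dependent process is again stationary and $k$-dependent, with $k$ exceeding $k_0$ by (roughly) twice the range, so this automatically preserves finite dependence; the only real work is to arrange that the output lies in $S$ almost surely. We will also need more colors than \cref{main} provides directly, but this is free: if $Z$ is a stationary $k_0$-dependent $q_0$-coloring and $(B_i)_{i\in\Z}$ is an independent i.i.d.\ sequence uniform on $\{1,\dots,r\}$, then $(Z_i,B_i)_{i\in\Z}$ is a stationary $k_0$-dependent $q_0 r$-coloring. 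So fix a stationary $k_0$-dependent $q$-coloring $Y$ of $\Z$ with $q$ as large as required below (depending only on $S$).

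Next I would unpack the non-lattice hypothesis combinatorially. Choose $w\in W$ with $\gcd T(w)=1$. Splicing a witness of recurrence time $t$ to a witness of recurrence time $t'$ immediately after a copy of $w$ produces a witness of recurrence time $t+t'$ (the only point to check is that the length-$m$ windows straddling the splice are legal, and each of them already occurs inside one of the two witnesses). Thus $T(w)$ is closed under addition, hence a numerical semigroup, hence contains every integer $d\geq M$ for some $M=M(S)$, which we may take to be at least $m$. For each such $d$ fix a word $v_d$ of length $d$ of the form $(x_1,\dots,x_d)$ for a witness $x\in S$ with $(x_1,\dots,x_m)=(x_{d+1},\dots,x_{d+m})=w$. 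The structural fact we need, proved by the same straddling-window argument (using that each block has length at least $m$, that the block after a $v_d$ begins with $w$, and that the witness of $v_d$ continues with $w$), is that \emph{any} bi-infinite concatenation of blocks $v_{d_j}$ with all $d_j\in T(w)\cap[M,\infty)$ belongs to $S$.

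The remaining and main step is to extract from $Y$ a renewal structure: a finite-window function $\mathcal M=\mathcal M(Y)\subseteq\Z$ that is almost surely bi-infinite and whose consecutive gaps all lie in some prescribed finite interval $[N_1,N_2]\subseteq[M,\infty)$. Granting this, define $X_i:=(v_{\,b-a})_{\,i-a+1}$ where $a<b$ are the markers with $a\le i<b$; they exist and lie within distance $N_2$ of $i$ because $\mathcal M$ is bi-infinite with gaps at most $N_2$. Then $X$ is a finite-range equivariant function of $Y$, hence stationary and finitely dependent, with dependence range controlled by $N_2$, $k_0$ and the window of $\mathcal M$; and $X\in S$ almost surely by the concatenation fact. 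Equivalently, one is exhibiting, for $q$ large, a sliding-block code from the proper $q$-coloring shift of finite type whose image lies in $S$, and pushing $Y$ forward through it.

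Constructing $\mathcal M$ is the hard part, and it is exactly here that both the non-lattice condition and \cref{main} are indispensable: a lattice $S$ admits no interpolation between markers kept a bounded distance apart, and a renewal process with bounded gaps is not a block factor of an i.i.d.\ process, so i.i.d.\ input cannot be used. The required device — encoding a renewal structure with gaps confined to a bounded window directly into the values of a coloring with sufficiently many colors — is supplied by the construction of Holroyd, Schramm and Wilson \cite{hsw}, developed there for finitary factor colorings; once it is invoked, checking the range and dependence range of $X$ is routine.
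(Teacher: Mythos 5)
Your high-level route matches the paper's: obtain $X$ as a block-factor map of a finitely dependent coloring from \cref{main}, and invoke the machinery of Holroyd, Schramm and Wilson~\cite{hsw} for the coloring-to-$S$ sliding-block code (this is the paper's \cref{from-hsw}(ii), with \cref{block-k-dep} supplying finite dependence of the image). However, there is a genuine gap in the way you set up the input to HSW.

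The HSW result requires a \emph{range-$m$} coloring as input for some $m$ depending on $S$ --- a process $Y$ with $Y_u\neq Y_v$ whenever $0<|u-v|\le m$ --- not merely a range-$1$ (i.e.\ ordinary) coloring with many colors. Your construction of the input, tensoring the $1$-dependent coloring $Z$ of \cref{main} with an independent i.i.d.\ sequence $B$, produces only a range-$1$ coloring: $(Z_i,B_i)=(Z_{i+2},B_{i+2})$ happens with positive probability. Worse, $(Z_i,B_i)$ is $2$-periodic over arbitrarily long intervals with positive probability (take $Z$ to be $\cdots1212\cdots$, which occurs because $B(x)\ge1$ for every proper coloring $x$, and take $B$ constant on that interval). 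On such an interval any translation-equivariant local rule for your marker set $\mathcal{M}$ must output a $2$-periodic subset, forcing all gaps into $\{1,2\}$ or else producing an arbitrarily long gap, which is incompatible with the requirement $[N_1,N_2]\subseteq[M,\infty)$ once $M>2$. So the range-$1$ input cannot support the marker extraction, and your appeal to HSW is ungrounded: you have not provided it with the input its theorem demands.

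The missing ingredient is the paper's \cref{range-m} specialized to $d=1$: to manufacture a range-$m$ coloring, place an independent copy of the $1$-dependent $4$-coloring on each arithmetic progression $\{a+ih:i\in\Z\}$ for every step $h=1,\dots,m$, and let $Y_v$ be the $m$-tuple of colors assigned to $v$ by the $m$ progressions through it. This yields a stationary $m$-dependent range-$m$ $4^m$-coloring, which is exactly what \cref{from-hsw}(ii) takes as input. Tensoring with i.i.d.\ adds colors but adds no range; the interleaved independent copies are essential, and no amount of added i.i.d.\ noise can substitute for them.
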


The following is a straightforward consequence of
\cref{block-factor}, proved in \cite{hsw}. Let $S$ be a
shift of finite type on $\Z$ that does not contain any
constant sequence. Then there is no block-factor that
belongs a.s.\ to $S$. (In fact, under the non-lattice
condition, it is shown in \cite{hsw} that there is a
\textit{finitary} factor of an i.i.d.\ process, with tower
function decay of its coding radius, that belongs a.s.\ to
$S$, and that this decay rate cannot be improved).
Combining this with \cref{sft} provides, as promised, an
even more striking answer to the Ibragimov-Linnik question:
\begin{trivlist}\em\item
{\em Any} non-lattice shift of finite type on $\Z$ that
contains no constant sequence serves to distinguish between
block-factors and stationary finitely dependent processes.
\end{trivlist}

Returning to coloring, for any graph $G$ and any $k$ and
$q$ one can ask whether there exists a $k$-dependent
$q$-coloring that is invariant in law under some given
group of automorphisms.  The following concept leads to
negative answers in some cases. A \df{hard-core} process on
$G$ is a process $J=(J_v)_{v\in V}$ such that each $J_v$
takes values in $\{0,1\}$, and almost surely we do not have
$J_u=J_v=1$ for adjacent vertices $u,v$.  If $X$ is a
$q$-coloring of $G$ then $J_v:=\ind[X_v=a]$ defines a
hard-core process for any given color $a\in[q]$.  If $X$ is
$k$-dependent then so is $J$. We define the critical point
\begin{multline*}
\ph=\ph(G):= \sup \\
\{p: \exists\text{ a $1$-dependent hard-core process $J$ with
}\P(J_v=1)=p\;\forall v\}.
\end{multline*}
Intriguingly, it turns out that for each $p\leq \ph$ there
is a {\em unique} $1$-dependent hard-core process with all
one-vertex marginals $\P(J_v=1)$ equal to $p$. Moreover,
$\ph$ has alternative interpretations involving complex
zeros of the partition function of the standard hard-core
model (or lattice gas) of statistical physics, and in terms
of boundary cases of the Lov\'asz local lemma.  See
\cref{sec-hardcore} and \cite{scott-sokal,scott-sokal2} for
details.

Suppose that there exists a $1$-dependent $q$-coloring $X$
of $G$ in which the colors $(X_v)_{v\in V}$ are identically
distributed.  (This last condition holds in particular if
the process is invariant in law under a transitive group of
automorphisms). Then the above remarks imply
\begin{equation}\label{hard-core-bound}
q \geq \frac{1}{\ph},
\end{equation}
so upper bounds on $\ph$ yield lower bounds on the number
of colors needed. We illustrate the method by proving the
following.
\begin{prop}\ \label{bounds}
Suppose that there exists a $1$-dependent $q$-coloring $X$
of $G$ with $(X_v)_{v\in V}$ identically distributed.
\begin{ilist}
\item For $G=\Z^d$ we have $q\geq (d+1)^{d+1}/d^d$, and
    moreover $q\geq 9$ for $d=2$, and $q\geq 12$ for
    $d=3$.
\item For $G=T_\Delta$, the infinite $\Delta$-regular
    tree, $q\geq \Delta^\Delta/(\Delta-1)^{\Delta-1}$.
\end{ilist}
\end{prop}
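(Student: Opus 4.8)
The whole argument runs through inequality \eqref{hard-core-bound}. A $q$-coloring $X$ with $(X_v)_{v\in V}$ identically distributed produces, for each color $a\in[q]$, a $1$-dependent hard-core process $J^{(a)}_v:=\ind[X_v=a]$ with $\P(J^{(a)}_v=1)=1/q$ for all $v$; hence $1/q\le\ph(G)$, and it suffices to prove the upper bounds $\ph(\Z^d)\le d^d/(d+1)^{d+1}$ (and, for the sharper assertions, $\ph(\Z^2)<\tfrac18$ and $\ph(\Z^3)<\tfrac1{11}$) and $\ph(T_\Delta)\le(\Delta-1)^{\Delta-1}/\Delta^\Delta$. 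Two facts about $\ph$ do the work. First, $\ph$ is monotone under passing to induced subgraphs: if $H$ is an induced subgraph of $G$ then $\ph(G)\le\ph(H)$, since the restriction to $H$ of a $1$-dependent hard-core process on $G$ is one on $H$ --- the only point to check is $1$-dependence, and if $A,B\subseteq V(H)$ are at $H$-distance greater than $1$ then, $H$ being induced, no vertex of $A$ is adjacent in $G$ to a vertex of $B$, so $A$ and $B$ are at $G$-distance greater than $1$ and the required independence is inherited. Second, by the theory of \cref{sec-hardcore} (see also \cite{scott-sokal,scott-sokal2}), existence of a $1$-dependent hard-core process with marginal $p$ on a finite graph $H$ forces $Z_H(-p)>0$, where $Z_H(\lambda)=\sum_I\lambda^{|I|}$ sums over independent sets $I$ of $H$; and for a rooted tree in which every vertex has $b$ children the polynomials $Z_H$ develop a zero at $-p$, for deep enough $H$, as soon as $p>b^b/(b+1)^{b+1}$, via the classical ratio recursion $r_k=-p\,(1+r_{k-1})^{-b}$ for such polynomials.

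Statement (ii) follows at once: fixing a root, $T_\Delta$ contains the rooted $(\Delta-1)$-ary tree of every depth $k$ as an induced subgraph, so $\ph(T_\Delta)\le\ph(\text{depth-}k\text{ tree})\to(\Delta-1)^{\Delta-1}/\Delta^\Delta$ as $k\to\infty$, whence $q\ge\Delta^\Delta/(\Delta-1)^{\Delta-1}$. For the general bound in (i) the point is that, although $\Z^d$ contains no infinite $d$-ary tree, for each fixed $k$ it does contain the depth-$k$ rooted $d$-ary tree as an induced (not necessarily isometric) subgraph: internal vertices of that tree have degree $d+1\le 2d$, so one can lay it out recursively, placing the $d$ subtrees hanging from a vertex in $d$ widely separated translates of the pattern with enough empty lattice between branches that no edges other than the tree edges appear (a grid analogue of the planar ``H-tree'' layout). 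Combining this with the two facts above gives $\ph(\Z^d)\le\inf_k\ph(\text{depth-}k\ d\text{-ary tree})=d^d/(d+1)^{d+1}$, hence $q\ge(d+1)^{d+1}/d^d$.

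The sharper bounds for $d=2,3$ instead exploit the cycles of $\Z^d$, which push $\ph$ strictly below the $d$-ary-tree value. Write $\Z^d$ as the increasing union of the slabs $S_w:=\Z\times\{1,\dots,w\}^{d-1}$; each $S_w$ is an induced subgraph with a finite transfer-matrix description along the first axis, so $\ph(\Z^d)=\lim_{w\to\infty}\ph(S_w)$ decreasingly, and $\ph(S_w)$ is the largest $p$ for which the hard-core transfer matrix of a column, evaluated at fugacity $-p$, still has a positive Perron eigenvalue with positive eigenvector --- equivalently, the largest $p$ for which the independence polynomials of long pieces of $S_w$ have no real zero in $[-p,0)$. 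A finite computation of this quantity --- of the Perron data of the transfer matrix, or of the first real zero of the relevant characteristic polynomial --- shows $\ph(S_w)<\tfrac18$ for some explicit $w$ when $d=2$ and $\ph(S_w)<\tfrac1{11}$ for some explicit $w$ when $d=3$, giving $q\ge9$ and $q\ge12$.

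The main obstacle is this last step: the transfer-matrix computation must be pushed to a large enough width $w$ to break the $\tfrac18$ and $\tfrac1{11}$ thresholds, and must be made rigorous (exact rational arithmetic, or certified interval bounds on the eigenvalue or polynomial), together with the easy but necessary monotonicity of $\ph(S_w)$ in $w$. By contrast the general bound is soft once the induced-subgraph monotonicity of $\ph$ and the $d$-ary-tree layout are in hand, and (ii) needs nothing beyond the classical tree recursion.
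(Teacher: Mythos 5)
Your reduction to $\ph$ and the induced-subgraph monotonicity of $\ph$ are both sound (though note a small slip: identically distributed across vertices does not force each color to have marginal exactly $1/q$; you should instead pick a color $a$ whose marginal $p_a$ is at least $1/q$ and apply $p_a\le\ph$). Your treatment of (ii) is correct and in fact reproves the upper direction of Shearer's equality $\ph(T_\Delta)=(\Delta-1)^{\Delta-1}/\Delta^\Delta$ from the ratio recursion $r_k=-p(1+r_{k-1})^{-b}$, whereas the paper simply cites it. For the $d=2,3$ refinements you propose a slab transfer-matrix computation, while the paper directly evaluates $Z_A(-p)$ on explicit rectangular boxes ($[13]\times[10]$ and $[12]\times[4]\times[4]$) in exact rational arithmetic; either route can be made rigorous, but the paper's is considerably simpler to certify.

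The genuine gap is in the general $\Z^d$ bound of (i). You claim that for each $k$, the depth-$k$ rooted $d$-ary tree embeds as an induced subgraph of $\Z^d$ via an ``H-tree'' layout. This is false for large $k$, already as a subgraph, let alone induced: all $\sim d^k$ tree vertices lie at tree distance at most $2k$ from the root, hence (since tree edges must map to grid edges) at $\ell_1$-distance at most $2k$ in $\Z^d$, yet a ball of radius $2k$ in $\Z^d$ contains only $O(k^d)$ points --- polynomial versus exponential, so no injective edge-preserving map exists once $k$ is large. The planar H-tree avoids this by drawing long edges, but an induced subgraph of $\Z^d$ must realize each tree edge as a single unit step. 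Thus your monotonicity chain $\ph(\Z^d)\le\ph(\text{depth-}k\ d\text{-ary tree})$ never gets to see deep trees, and the limit $d^d/(d+1)^{d+1}$ is not reached. The paper sidesteps this entirely by citing the bound $\ph(\Z^d)\le d^d/(d+1)^{d+1}$ from Scott--Sokal (\cite[\S8.4]{scott-sokal2}), which is proved by different (analytic/combinatorial) means; you would need to either import that result or find a genuinely different finite structure inside $\Z^d$ to push $\ph$ down.
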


We do not know the minimum number of colors needed for a
stationary $1$-dependent coloring of $\Z^d$ for any $d\geq
2$.  On the tree $T_\Delta$, one may use \cref{main} to
construct $1$-dependent colorings that are invariant in law
under certain transitive groups of automorphisms, but again
we do not know the minimum number of colors, nor whether
fully automorphism-invariant versions exist.

It is a remarkable fact that the bound
\eqref{hard-core-bound} is tight on $\Z$: we have
$\ph(\Z)=1/4$, yet there exists a stationary $1$-dependent
$4$-coloring.  In other words, it is possible to couple $4$
copies of the critical $1$-dependent hard-core process in
such a way that their supports partition $\Z$, while the
entire process retains stationarity and $1$-dependence.

One can interpret $k$-dependent processes via the language
of functional analysis (see also \cite{devalk}).  The
following is a consequence of \cref{main}.

\begin{cor}\label{analysis}
Let $(k,q)=(1,4) \text{ or }(2,3)$. There exists a real
separable Hilbert space $U$ and a bounded linear operator
$R:U\to U$ with the following properties. The image $R^n U$
is one-dimensional for all $n> k$.
 There is a decomposition $U=U_1 + \cdots + U_q$ into mutually orthogonal
closed linear subspaces, such that for each $i$, the image
$R \,U_i$ is contained in the closed linear span of
$\{U_j:j\neq i\}$.
\end{cor}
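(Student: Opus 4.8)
The plan is to recast the coloring $X$ furnished by \cref{main} as a \emph{transfer operator representation}: a real separable Hilbert space $U$, a unit vector $\Omega\in U$, and bounded operators $(S_c)_{c\in[q]}$ on $U$ with
\[\P(X_1=x_1,\dots,X_n=x_n)=\langle \Omega,\, S_{x_1}S_{x_2}\cdots S_{x_n}\,\Omega\rangle\qquad(n\ge 1,\ x_i\in[q]),\]
chosen \emph{minimally} — so that the vectors $\{S_{x_1}\cdots S_{x_n}\Omega\}$ and $\{S_{x_n}^{*}\cdots S_{x_1}^{*}\Omega\}$ (over all finite words) each span a dense subspace of $U$ — and with $S_{c}^{*}S_{c'}=0$ whenever $c\ne c'$. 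The operator of the corollary is then $R:=\sum_{c\in[q]}S_c$, with $U_c:=\overline{S_c\,U}$; I claim the construction of $(U,\Omega,(S_c))$ is the only real work, and that the three asserted properties fall out of the three defining features of $X$.

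From \emph{stationarity}: summing the displayed identity over the last, resp.\ first, coordinate gives $R\,\Omega=\Omega$ and $R^{*}\Omega=\Omega$. From the \emph{proper-coloring} constraint $X_i\ne X_{i+1}$: the cylinder probability of any word with two equal adjacent symbols $c,c$ vanishes, which reads $\langle u,S_cS_c\,v\rangle=0$ for $u,v$ in the two dense families; hence $S_cS_c=0$, so $R\,S_c=\sum_{c'\ne c}S_{c'}S_c$ has range contained in $\sum_{c'\ne c}\overline{S_{c'}U}$, i.e.\ $R\,U_c\subseteq\overline{\operatorname{span}}\{U_{c'}:c'\ne c\}$. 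The relations $S_c^{*}S_{c'}=0$ ($c\ne c'$) make the $U_c$ mutually orthogonal, and since $\Omega=R\Omega=\sum_c S_c\Omega\in\sum_c U_c$ and the monomials are dense, $U=U_1+\dots+U_q$. From \emph{$k$-dependence}: marginalising a word over $n$ consecutive middle coordinates replaces them by a factor $R^{n}$, and for $n\ge k$ the resulting gapped cylinder probability factors, $\langle\Omega,A\,R^{n}B\,\Omega\rangle=\langle\Omega,A\,\Omega\rangle\langle\Omega,B\,\Omega\rangle$ for all monomials $A,B$ in the $S_c$; by minimality this forces $R^{n}$ to be the rank-one operator $w\mapsto\langle\Omega,w\rangle\,\Omega$, whose image is the one-dimensional line $\R\Omega$ (nonzero since $\|\Omega\|=1$). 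Thus $R^{n}U$ is one-dimensional for every $n\ge k$, in particular for every $n>k$.

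The crux — and what I expect to be the main obstacle — is producing such an honest representation for the specific colorings of \cref{main}. The naive candidate ($U=L^{2}$ of the one-sided process, $R$ the adjoint of the shift, $U_c$ the functions supported on $\{X_1=c\}$) satisfies the orthogonal-decomposition and color conditions verbatim, but \emph{not} the rank-one collapse: there the powers of $R$ converge to a rank-one operator only strongly, never becoming finite-rank, precisely because the colorings are not hidden-Markov (\cref{hidden}). Getting $R^{k}$ to be \emph{exactly} rank one requires passing to the minimal representation, and then one must verify that the bilinear form defined by the cylinder probabilities is positive semidefinite (so its quotient completion is a genuine real Hilbert space, not an indefinite-metric space), that the emission operators $S_c$ are bounded on it, and that $S_c^{*}S_{c'}=0$ for $c\ne c'$ — none of which holds for a general stationary finite-alphabet process. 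The idea is to exploit the explicit combinatorial formulas for cylinder probabilities obtained in proving \cref{main}, together with the reversibility and color-permutation symmetry of \cref{properties}(i): these render the relevant form symmetric, let one verify its positivity, and yield $S_c=S_c^{*}$ with $S_c^{*}S_{c'}=0$, whence $R=\sum_c S_c$ is self-adjoint and $\|R\|=\|R^{k+1}\|^{1/(k+1)}=1$, so $R$ is in particular bounded. Once the representation is in hand, the reductions above and the translation into the stated form of the corollary are routine.
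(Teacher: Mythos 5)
Your proposal takes a genuinely different route from the paper, and it contains a real gap (which you yourself flag as the crux) that the paper's construction sidesteps entirely.

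The paper does not build a minimal transfer-operator representation and never confronts a positivity question. It works inside the concrete space $L^2(\mu)$, where $\mu$ is the law of $X$ on $[q]^\Z$: with $T$ the shift operator, $A$ the future-measurable and $B$ the past-measurable functions, and $P_B$ orthogonal projection onto $B$, it sets $U:=\overline{P_BA}$ and $R:=(P_BT)|_U$. Positivity and separability are then automatic, boundedness of $R$ is immediate from $\|P_B\|=\|T\|=1$, and the rank-one collapse needs no minimality: one shows $R^nU\subseteq\overline{P_BT^nA}$, and $k$-dependence makes $P_BT^nA$ the constants once $n>k$, since $T^nA$ is measurable w.r.t.\ $X_n,X_{n+1},\dots$, which is independent of $\dots,X_{-1},X_0$ when the gap $\{1,\dots,n-1\}$ has size at least $k$. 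The decomposition is $U_j:=\overline{P_B(V_j\cap A)}$ with $V_j$ the functions supported on $\{x_0=j\}$; orthogonality and $RU_j\perp U_j$ then come from the coloring constraint. None of this uses reversibility, color symmetry, or the explicit building formulas.

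In your route the existence of the minimal representation is asserted, not established, and it is not routine. The Gram form you need — on the span of the future monomials $S_{x_1}\cdots S_{x_n}\Omega$ — is not directly a collection of cylinder probabilities; only the pairings between one future monomial and one past ($S^*$) monomial are. So you cannot simply read off positivity, and you also have to produce the relations $S_c^*S_{c'}=0$ and boundedness of the $S_c$ from scratch. Worse, the mechanism you propose for closing the gap is internally inconsistent: you postulate both $S_cS_c=0$ (from the coloring) and $S_c=S_c^*$ (from reversibility/symmetry), but a self-adjoint operator with $S_c^2=0$ satisfies $\|S_c\|^2=\|S_c^*S_c\|=\|S_c^2\|=0$, forcing $S_c=0$ and hence a trivial representation. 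Relatedly, $\|R\|=\|R^{k+1}\|^{1/(k+1)}$ uses normality of $R$, which you do not have. (The paper's $R$ is not self-adjoint either — the corollary does not ask it to be.) As the paper notes at the end of \cref{hilbert}, the de Valk-style minimal representation $H=\overline{RU}$, $A_i=P_HI_iT$ is most naturally extracted \emph{after} one has the $\overline{P_BA}$ construction; trying to build it directly is precisely the harder path.
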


So far as we know, \cref{analysis} is new.  Schramm
conjectured in 2008 (motivated by colorings) that such $U$
and $R$ cannot exist for any $k$ and $q$ (even with the
$U_j$ merely linearly independent, and without the
separability restriction).  A space $U$ satisfying the
conditions of the corollary cannot be finite-dimensional,
and by Lidskii's theorem (see e.g.\ \cite[Chapter
30]{lax}), $R$ cannot be of trace class. A {\em complex}
Hilbert space example has been suggested by Fedja Nazarov
and Serguei Denissov (personal communication).

We now give a complete probabilistic description of our two
colorings of $\Z$, which is astonishingly simple. (However,
it is not at all obvious that it works; we will prove this
in the next two sections.) See \cref{pic}. Let
$Z=(Z_1,\ldots,Z_n)$ be a sequence of i.i.d.\ random
variables taking values $1,2,\ldots,q$ with equal
probabilities.  Let $\sigma$ be an independent uniformly
random permutation of $1,\ldots,n$, which we interpret as
meaning that the symbol $Z_i$ arrives at time $\sigma(i)$.
Let $E$ be the event that, for every time $t=1,\ldots,n$,
the subsequence of $Z$ formed by those symbols that arrived
up to time $t$ (ordered as in the original sequence $Z$)
forms a proper coloring (i.e.\ no two consecutive elements
in the subsequence are equal). Then for $q=4$ or $q=3$, the
conditional law of $Z$ given $E$ equals the law of
$(X_1,\ldots,X_n)$, where $X$ is, respectively, the
$4$-coloring or the $3$-coloring of \cref{main}.
\begin{figure}
\centering
\begin{tikzpicture}[scale=0.7]
\definecolor{c1}{rgb}{0,0,0.6};
\definecolor{c2}{rgb}{0.6,0,1};
\definecolor{c3}{rgb}{1,0,0};
\definecolor{c4}{rgb}{0,0.8,0};
\begin{scope}[line width=3pt,->]
 \draw[c3] (1,4) -- (1,0);
 \draw[c3] (3,3) -- (3,0);
 \draw[c3] (6,5) -- (6,0);
 \draw[c1] (2,6) -- (2,0);
 \draw[c2] (4,1) -- (4,0);
 \draw[c4] (5,2) -- (5,0);
\end{scope}
\begin{scope}[line width=4pt,fill=white]
 \filldraw[draw=c3,text=c3]
 (1,4) circle (12pt) node{\sffamily\bfseries 3}
 (3,3) circle (12pt) node{\sffamily\bfseries 3}
 (6,5) circle (12pt) node{\sffamily\bfseries 3};
 \filldraw[draw=c1,text=c1]
 (2,6) circle (12pt) node{\sffamily\bfseries 1};
 \filldraw[draw=c2,text=c2]
 (4,1) circle (12pt) node{\sffamily\bfseries 2};
 \filldraw[draw=c4,text=c4]
 (5,2) circle (12pt) node{\sffamily\bfseries 4};
\end{scope}
\draw (7.5,3.6) node[rotate=90] {time};
 \foreach \x in {1,2,3,4,5,6}
  \draw (\x,0) node[anchor=north] {$Z_{\x}$};
 \foreach \x in {1,2,3,4,5,6}
  \draw (6.5,7-\x) node[anchor=west] {${\x}$};
 \draw[line width=1pt,double distance=3pt,implies-implies]
  (3.65,3) -- (5.8,3);
\end{tikzpicture}
\caption{Construction of the process:
random colors arrive at random times.
In this case the coloring $(Z_1,\ldots,Z_6)$ is rejected at time $4$,
because $Z_3$ and $Z_6$ are both red (color $3$),
and they arrive before the intervening points $Z_4$
and $Z_5$.}\label{pic}
\end{figure}
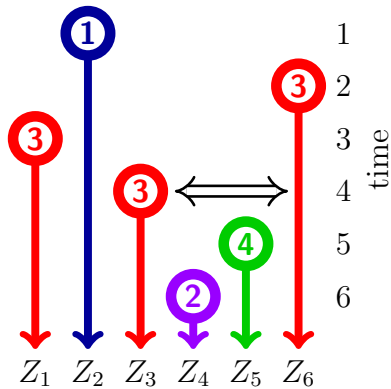

We emphasize that the cases $q=3,4$ in the above
description are very special.  For $q=2$ or $q\geq 5$, the
resulting process is not $k$-dependent for any $k$.

In a follow-up article \cite{hl-qcol} by the current
authors, we use a more elaborate method inspired by the
construction above to obtain for all $q\geq 5$ a stationary
$1$-dependent $q$-coloring of $\Z$ that is symmetric under
permutations the colors (as in \cref{properties}(i)).
Besides these examples and straightforward embellishments
of them, no other stationary finitely dependent colorings
of $\Z$ are known.

In another article \cite{h-finfin} by one of the current
authors, the above construction is modified to obtain a
probabilistic construction of the $4$-coloring on the whole
of $\Z$.  (More precisely, the process is expressed as a
finitary factor of an i.i.d.\ process; however, the
approach fails for the $3$-coloring). One complication is
that, while the laws of colorings $(X_1,\ldots,X_n)$ are
consistent between different intervals (as required to
obtain an extension to $\Z$), the accompanying random
permutations (after conditioning) are \textit{not}
consistent.

The article \cite{hsw} deals with the closely related issue
of coloring $\Z^d$ by a finitary factor of an i.i.d.\
process; that is, a deterministic function that commutes
with translations in which the color at the origin can
determined from the i.i.d.\ variables within some finite
(but random and perhaps unbounded) radius. Depending on the
number of colors and the dimension, it turns out that the
optimal tail decay of this radius is either a power law or
a tower function.

The relationship between the $4$-coloring and $3$-coloring
is puzzling. Can they be coupled in a natural way (without
conditioning)? Here is one plausible approach that fails.
If $X$ is a $1$-dependent $4$-coloring then we can obtain a
$3$-dependent $3$-coloring $Y$ as a $3$-block-factor of $X$
by eliminating color $4$: take $Y_i$ to be $X_i$ unless
$X_i=4$, in which case
$Y_i:=\min(\{1,2,3\}\setminus\{X_{i-1},X_{i+1}\})$.  It is
natural to try to get a $2$-dependent $3$-coloring as a
$2$-block-factor of $X$, but this is impossible -- this
amounts to the fact that the Kautz graph with vertices
$V=\{(a,b)\in\{1,2,3,4\}^2: a\neq b\}$ and (undirected)
edges $E=\{((a,b),(b,c)):(a,b),(b,c)\in V\}$ is not
3-colorable.

Coloring, finite dependence, and block-factors have
applications in computer science (see e.g.\
\cite{naor,linial}). For example, colors may represent
update schedules or communication frequencies for machines
in a network; adjacent machines are not permitted to
conflict with each other.  Finite dependence implies
privacy or security benefits: an adversary who gains
knowledge of some colors learns nothing about the others,
except within some fixed distance.  A block-factor (or,
more generally, a finitary factor \cite{hsw,h-finfin}) has
the interpretation that colors can be computed by the
machines in a distributed fashion, based on randomness
generated locally together with local communication.

The article is organized as follows.  In \cref{buildings}
we introduce a combinatorial structure on which our
processes are based.  In \cref{colorings} we deduce
\cref{main} and \cref{properties}(i,v).
\cref{block,hilbert,renewal-structure,alternative,extras,sec-hardcore}
can largely be read independently of each other.  In
\cref{block,hilbert} we give proofs of
\cref{block-factor,hidden} respectively, the latter using
the Hilbert space interpretation that also gives
\cref{analysis}.  In \cref{renewal-structure} we prove
\cref{properties}(ii,iv) together with the stronger
assertion that every $1$-dependent $4$-coloring has the
former property, and we give a new proof of Schramm's
result that no $1$-dependent $3$-coloring exists.  In
\cref{alternative} we provide the alternative expression
for the cylinder probabilities, and deduce
\cref{properties}(iii).  \cref{extras} contains the proofs
of \cref{zd,sft}, and in \cref{sec-hardcore} we discuss
hard-core processes and prove \cref{bounds}.  We conclude
the article with a list of open problems.

\section{Buildings}
%%%%%%%%%%%%%%%%%%%%%%%%%%%%%%%%%%%%%%%%%%%%%%%%%%%%%%%%%%%%%%%%%%%%%%%%%%%%%%%
\label{buildings}

In this section we introduce the combinatorial object on which our
construction is based.  We deduce some striking properties, although the real
magic will happen when we interpret them probabilistically.

A \df{word} is a finite sequence $x=(x_1,x_2,\ldots,x_n)\in\Z^n$, which we
sometimes abbreviate to $x_1x_2\cdots x_n$.  The word $x$ is a \df{proper
coloring} if $x_i\neq x_{i+1}$ for all $1\leq i< n$.  For a word $x\in\Z^n$
and a symbol $a\in\Z$ we denote the concatenation as $xa=(x_1,\ldots,x_n,a)$,
etc. We write $\widehat{x}_i:=x_1\cdots x_{i-1}x_{i+1}\cdots x_n$ for $x$
with the $i$th symbol removed.

Let $S_n$ be the symmetric group of all permutations of $1,\ldots,n$.  Let
$x\in\Z^n$ be a word, and let $\sigma\in S_n$ be a permutation. We interpret
$\sigma$ as meaning that the symbol $x_i$ arrives at time $\sigma(i)$ (and in
position $i$). For $t=1,\ldots,n$ we define
$$x^\sigma_{(t)}:=(x_i: \sigma(i)\leq t),$$
the subsequence of symbols that arrived by time $t$ (ordered as in $x$, {\em
not} ordered by arrival times). So for example if $\sigma=(2,3,1)$ then
$x^\sigma_{(2)}=(x_1,x_3)$. We say that $\sigma$ is a \df{proper building} of
$x$ if $x^\sigma_{(t)}$ is a proper coloring for each $t=1,\dots,n$.  So the
identity permutation is a proper building of the word $121$, but the
permutation $(2,3,1)$ is not. Let $B(x)$ denote the number of proper
buildings of $x$.  The following is the key property.

\begin{lemma}\label{build-rec}
If $x$ is a proper coloring of length $n$ then
$$B(x)=\sum_{i=1}^n B(\widehat{x}_i).$$
\end{lemma}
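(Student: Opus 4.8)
The plan is to prove the recursion by a direct bijection organised around the symbol of $x$ that arrives \emph{last}. Given a proper building $\sigma\in S_n$ of $x$, let $i:=\sigma^{-1}(n)$ be the position whose symbol arrives at the final time $t=n$. The restriction of $\sigma$ to $\{1,\dots,n\}\setminus\{i\}$ is then a bijection of that set onto $\{1,\dots,n-1\}$, which I identify, by relabelling the positions $1,\dots,i-1,i+1,\dots,n$ of $x$ as $1,\dots,n-1$ in order, with a permutation $\tau$ of the positions of $\widehat{x}_i$. For every $t\le n-1$ the subsequence $x^\sigma_{(t)}$ involves only positions other than $i$ (since $\sigma(i)=n>t$), so it coincides with $(\widehat{x}_i)^\tau_{(t)}$; as $\sigma$ is a proper building of $x$, each such subsequence is a proper coloring, hence $\tau$ is a proper building of $\widehat{x}_i$. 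This defines a map $\sigma\mapsto(i,\tau)$ from the proper buildings of $x$ to the set of pairs $(i,\tau)$ with $i\in\{1,\dots,n\}$ and $\tau$ a proper building of $\widehat{x}_i$.

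Conversely, given such a pair $(i,\tau)$, I let $\sigma\in S_n$ be the unique permutation with $\sigma(i)=n$ that orders the remaining $n-1$ symbols among themselves exactly as $\tau$ does. For $t\le n-1$ the subsequence $x^\sigma_{(t)}$ again avoids position $i$ and equals $(\widehat{x}_i)^\tau_{(t)}$, hence is a proper coloring; and for $t=n$ we have $x^\sigma_{(n)}=x$, which is a proper coloring \emph{by the hypothesis of the lemma}. Thus $\sigma$ is a proper building of $x$, and one checks immediately that the two constructions are mutually inverse.

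Consequently the proper buildings $\sigma$ of $x$ with $\sigma^{-1}(n)=i$ are in bijection with the proper buildings of $\widehat{x}_i$, of which there are $B(\widehat{x}_i)$; summing over the $n$ choices of $i$ gives $B(x)=\sum_{i=1}^{n}B(\widehat{x}_i)$. I do not expect a genuine obstacle here once one commits to splitting on the last arrival; the only delicate point is the verification in the inverse map that re-inserting the removed symbol at time $n$ leaves $x^\sigma_{(n)}=x$ a proper coloring, and this is precisely where properness of $x$ is used. It is also exactly why the hypothesis cannot be dropped: for $x=11$ one has $B(x)=0$, whereas $\widehat{x}_1=\widehat{x}_2=(1)$ each admit a unique proper building, so the right-hand side would be $2$.
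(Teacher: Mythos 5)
Your proof is correct and is a careful expansion of exactly the argument in the paper: the paper also conditions on the last arrival $i=\sigma^{-1}(n)$ and observes that $\sigma$ is a proper building of $x$ with $\sigma^{-1}(n)=i$ if and only if the induced permutation is a proper building of $\widehat{x}_i$. Your additional remarks — that the hypothesis that $x$ is proper is needed precisely for the final subsequence $x^\sigma_{(n)}=x$, and the counterexample $x=11$ — are accurate and helpful but do not change the method.
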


\begin{proof}
This follows on considering the last arrival $\sigma^{-1}(n)$.  The
permutation $\sigma$ is a proper building of $x$ with $\sigma^{-1}(n)=i$ if
and only if $\hat\sigma_i$ is a proper building of $\hat x_i$.
\end{proof}

We deduce the following identities.  Recall that $[q]:=\{1,\ldots,q\}$.

\begin{prop}\label{build-cons}
Let $q\geq 2$ and $x\in[q]^n$, where $n\geq 0$.  We have
$$\sum_{a\in [q]} B(xa) = \bigl[n(q-2)+q\bigr] B(x).$$
\end{prop}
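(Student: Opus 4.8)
The plan is to prove the identity $\sum_{a\in[q]} B(xa) = [n(q-2)+q]\,B(x)$ by relating proper buildings of the extended words $xa$ back to proper buildings of $x$, organized according to the arrival time assigned to the new final symbol. Fix a proper coloring $x\in[q]^n$ and a symbol $a\in[q]$ with $a\neq x_n$ (so that $xa$ is itself a proper coloring; if $a=x_n$ then $B(xa)=0$ and that term contributes nothing). A proper building $\tau\in S_{n+1}$ of $xa$ is determined by the value $t:=\tau(n+1)\in\{1,\ldots,n+1\}$ together with the induced order on the first $n$ positions, which yields a permutation $\sigma\in S_n$. The first step is to record exactly when such a pair $(\sigma,t)$ gives a proper building of $xa$: we need $\sigma$ to be a proper building of $x$ (since deleting the last symbol from every prefix-subsequence of $xa$ recovers a prefix-subsequence of $x$), and additionally, at the moment the symbol $a$ arrives, it must not create an equal adjacent pair in the subsequence $x^\sigma_{(t-1)}$ with $a$ appended in position $n+1$ — but since $a$ is appended at the very end, the only possible conflict is with the last symbol of $x^\sigma_{(t-1)}$, i.e.\ with whichever $x_i$ (largest $i$ with $\sigma(i)\le t-1$) is currently rightmost. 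Once $a$ has arrived, later arrivals of symbols $x_j$ are all to the left of position $n+1$, so they never touch $a$.

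The second step is the counting. For a fixed proper building $\sigma$ of $x$, I would count the number of legal values $t\in\{1,\ldots,n+1\}$. Writing $p_0=\emptyset, p_1, \ldots, p_n = x$ for the sequence of prefix-subsequences $x^\sigma_{(0)},\ldots,x^\sigma_{(n)}$ (each obtained from the previous by inserting one symbol), the symbol $a$ inserted just after time $t-1$ is appended to $p_{t-1}$; this is legal unless the last symbol of $p_{t-1}$ equals $a$. So the number of \emph{bad} values of $t$ equals the number of indices $r\in\{0,\ldots,n\}$ such that the last symbol of $p_r$ is $a$. Summing over all $a\in[q]$ with $a\neq x_n$, and over all proper buildings $\sigma$ of $x$: the total number of pairs $(\sigma,t)$ with $\sigma$ a proper building of $x$ and $t$ arbitrary is $(n+1)\sum_{a\neq x_n} B(x) = (n+1)(q-1)B(x)$. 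From this we subtract the bad pairs. For each proper building $\sigma$ and each $r\in\{0,\ldots,n\}$, the last symbol of $p_r$ (when $r\ge 1$) is some specific element of $[q]\setminus\{x_n\}$ or possibly equals $x_n$; in any case it is exactly one symbol, contributing one bad pair $(\sigma, r+1)$ for the choice $a=$ that last symbol — \emph{unless} that last symbol is $x_n$, in which case $a=x_n$ was excluded and there is no bad pair to subtract. So the number of bad pairs is $B(x)$ times the expected count over $r\in\{0,\ldots,n\}$ of ``last symbol of $p_r$ is not $x_n$''; the case $r=0$ is vacuous ($p_0$ empty), and $p_n=x$ has last symbol $x_n$, so at most $n-1$ of the values $r=1,\ldots,n$ can be bad. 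I expect that in fact the count of bad pairs works out to $B(x)\cdot[\text{something}]$ independent of $\sigma$ — this is the point needing care.

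The cleanest route, and the one I would actually pursue, is to sidestep the per-$\sigma$ bookkeeping by instead \emph{applying Lemma~\ref{build-rec} to $xa$}. For $a\neq x_n$ the word $xa$ is a proper coloring of length $n+1$, so $B(xa)=\sum_{i=1}^{n+1}B(\widehat{(xa)}_i)$. For $i=n+1$ this is $B(x)$; for $i\le n$ it is $B(\widehat{x}_i\,a)$. Now sum over $a\in[q]$: using $\sum_a B(xa)$ on the left and, on the right, $\sum_a B(\widehat{x}_i\,a)$ for each $i\le n$, together with the observation that $\widehat{x}_i$ has length $n-1$. If $\widehat{x}_i$ is a proper coloring we can hope to apply the proposition inductively to it; but $\widehat{x}_i$ need not be a proper coloring (deleting $x_i$ may leave $x_{i-1}=x_{i+1}$ adjacent), so the induction is not immediate — this is the main obstacle. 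I would handle it by proving the proposition by induction on $n$ for \emph{all} words $x\in[q]^n$, not just proper colorings, after checking that both sides of the asserted identity behave consistently: if $x$ is not a proper coloring then $B(x)=0$ and one must verify $\sum_a B(xa)=0$ as well, i.e.\ that appending a symbol to a non-proper word cannot create a properly-buildable word — which is clear, since any building of $xa$ restricts to a building of $x$. With the statement thus extended to all words, the base case $n=0$ reads $\sum_{a\in[q]}B(a)=q=[0\cdot(q-2)+q]\cdot 1$ (as $B(\emptyset)=1$ and each $B(a)=1$), and the inductive step combines $B(xa)=B(x)+\sum_{i=1}^n B(\widehat{x}_i\,a)$ (valid for all $x$, by the same last-arrival argument as Lemma~\ref{build-rec}, with the convention that it reads $0=0$ when $xa$ is not proper) with the inductive hypothesis $\sum_a B(\widehat{x}_i\,a)=[(n-1)(q-2)+q]B(\widehat{x}_i)$ and Lemma~\ref{build-rec}'s consequence $\sum_{i=1}^n B(\widehat{x}_i)=B(x)$, giving $\sum_a B(xa)=qB(x)+[(n-1)(q-2)+q]B(x)=[n(q-2)+q]B(x)$, as desired.
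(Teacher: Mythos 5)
Your final route is essentially the paper's: induct on $n$, apply Lemma~\ref{build-rec} to $xa$ to get $B(xa)=B(x)+\sum_{i=1}^n B(\hat x_i a)$, and then use the inductive hypothesis on the $\hat x_i$. But there is a genuine gap in the way you handle the extension to all $a\in[q]$, and it shows up as an arithmetic inconsistency in your last line.

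The claim that $B(xa)=B(x)+\sum_{i=1}^n B(\hat x_i a)$ is ``valid for all $x$, with the convention that it reads $0=0$ when $xa$ is not proper'' is false. When $x$ is a proper coloring and $a=x_n$, the word $xa$ is not proper, so the left side is $0$; but the right side is not $0$. Indeed $B(\hat x_i a)=0$ for $i\le n-1$ (since $\hat x_i$ still ends in $x_n=a$), while $\hat x_n a = x$, so the right side equals $B(x)+B(x)=2B(x)\neq 0$. Because your inductive step sums the false identity over all $a\in[q]$ rather than only over $a\neq x_n$, the constant term comes out as $qB(x)$ where it should be $(q-1)B(x)$, and the $i=n$ contribution is overcounted by another $B(x)$. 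This is reflected directly in your final display: $q + [(n-1)(q-2)+q] = n(q-2)+q+2$, not $n(q-2)+q$, so the asserted equality $qB(x)+[(n-1)(q-2)+q]B(x)=[n(q-2)+q]B(x)$ does not hold.

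The fix, which is what the paper does, is to keep the outer sum restricted to $a\neq x_n$ (obtaining the factor $q-1$ rather than $q$) and to treat the $i=n$ term carefully: for $i\le n-1$ one has $\sum_{a\neq x_n}B(\hat x_i a)=\sum_{a\in[q]}B(\hat x_i a)$ automatically (since $B(\hat x_i x_n)=0$), but for $i=n$ the missing term $B(\hat x_n x_n)=B(x)$ must be added back, i.e.\ one uses $\sum_{a\neq x_n}B(\hat x_n a)+B(\hat x_n x_n)=[(n-1)(q-2)+q]B(\hat x_n)$. These two corrections together turn your $qB(x)$ into $(q-1)B(x)-B(x)=(q-2)B(x)$, and then $(q-2)+[(n-1)(q-2)+q]=n(q-2)+q$ as desired. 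Your first, more combinatorial sketch (counting bad arrival times $t$ per building $\sigma$) also stalls at exactly this kind of bookkeeping, as you yourself note; the clean inductive argument avoids it, but only if the $a=x_n$ case is handled explicitly rather than swept under a ``$0=0$'' convention.
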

\begin{prop}\label{build-dep}
Let $x\in[q]^m$ and $y\in[q]^n$, where $m,n\geq 0$.
\begin{align*}
\label{build-dep-eq}
&\text{If $q=4$ then}&\sum_{a\in [q]} B(xay) &= 2 \binom{m+n+2}{m+1} B(x) B(y). \\
&\text{If $q=3$ then}&\sum_{a,b\in[q]} B(xaby)&=2\binom{m+n+4}{m+2} B(x)B(y).
\end{align*}
\end{prop}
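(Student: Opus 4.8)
The plan is to prove both identities simultaneously with \cref{build-cons} by a single induction, treating $B(\widehat{\,\cdot\,})$ via the recursion of \cref{build-rec}. First I would record the base cases: for $n=0$ (or $m=n=0$ in \cref{build-dep}), the quantities $B(x)$, $B(xa)$, $B(xay)$ etc.\ are readily computed directly, since a word of length $\leq 2$ has $B$ equal to $1$ or $2$ depending only on whether it is a proper coloring, and the binomial prefactors $\binom{m+n+2}{m+1}$ and $\binom{m+n+4}{m+2}$ reduce to small explicit constants. Then I would set up the induction on total length: to prove the statement for words summing to length $N$, I assume it for all shorter total lengths.

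For the inductive step in \cref{build-cons}, I would apply \cref{build-rec} to the proper coloring $xa$ (of length $n+1$): $B(xa)=\sum_{j} B(\widehat{(xa)}_j)$. The term removing the last symbol gives $B(x)$; the terms removing an internal symbol $x_i$ give $B(\widehat{x}_i\, a)$, and summing over $a\in[q]$ lets me invoke the induction hypothesis of \cref{build-cons} applied to the shorter word $\widehat{x}_i$ — except one must be careful that $\widehat{x}_i$ need not be a proper coloring. The key observation is that if $x$ is a proper coloring but $\widehat{x}_i$ is not, then $x_{i-1}=x_{i+1}$, and in that case $B(\widehat{x}_i)=0$, so those terms drop out and the bookkeeping still works; alternatively, since the recursion only ever needs $B$ on arbitrary words, I would prove the propositions for \emph{all} words $x\in[q]^n$, noting $B$ of a non-proper-coloring is $0$. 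Collecting the contributions and using $\sum_i 1 = n$ for the number of internal positions should assemble the coefficient $n(q-2)+q$ after combining with the $B(x)$ boundary term; this is a routine but slightly fiddly reindexing.

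The main work, and the step I expect to be the genuine obstacle, is \cref{build-dep}. Here I would again apply \cref{build-rec} to $xay$ (for $q=4$) or $xaby$ (for $q=3$), splitting the sum over which symbol is removed last into: removing a symbol of $x$, removing a symbol of $y$, or removing the bridging symbol(s) $a$ (and $b$). Removing a symbol of $x$ yields $\sum_a B(\widehat{x}_i a y)$, to which the induction hypothesis of \cref{build-dep} applies (shorter total length); similarly for $y$. Removing the bridging symbol $a$ (when $q=4$) produces $B(xy)$-type terms after summing over $a$ — but $xy$ is generally not what appears; rather one gets $\sum_a B(\widehat{(xay)}_{m+1})$ which, with the $a$-entry gone, is just $B(xy)$ counted $q$ times, and $B(xy)$ must then be related to $B(x)B(y)$. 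This is exactly where one needs a \emph{companion identity} expressing $\sum_a B(xay)$ and $B(xy)$ in terms of $B(x)B(y)$ in a compatible way — i.e.\ \cref{build-dep} for $q=4$ should be proved together with the auxiliary relation $B(xy) = \binom{m+n}{m}B(x)B(y)$ (and an analogue for $q=3$ with a bridging sum of length two, e.g.\ $\sum_b B(xby)$). For $q=3$ the two-symbol bridge $ab$ makes the splitting more intricate: removing $a$ last leaves $xby$, removing $b$ last leaves $xay$, and one must sum over the remaining free symbol; the recursion then couples \cref{build-dep} to itself and to the $q=3$ version of \cref{build-cons}. The hard part will be choosing the right bundle of auxiliary identities so that the induction closes, and then verifying that the Pascal-type recurrences among the binomial coefficients $\binom{m+n+2}{m+1}$, $\binom{m+n+4}{m+2}$ match the combinatorial recursion exactly — this is where I would expect to spend most of the effort, and where a clean statement of the companion lemma (probably: for $q=4$, $B(xy)=\binom{m+n}{m}B(x)B(y)$ for all proper colorings, and for $q=3$ an analogous pair of formulas for $B(xy)$ and $\sum_{a}B(xay)$) is the crux.
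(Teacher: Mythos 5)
Your overall skeleton---induction on $m+n$, applying \cref{build-rec} to $xay$ (or $xaby$), and splitting according to whether the last arrival lies in $x$, in the bridge, or in $y$---is the same as the paper's, and the remark that one can work with arbitrary words by exploiting $B=0$ on improper ones is correct. But the companion lemma you propose, $B(xy)=\binom{m+n}{m}B(x)B(y)$, is false, and without a correct replacement the induction does not close. For $q=4$ take $x=(1)$, $y=(2,1)$: then $xy=(1,2,1)$ is a proper coloring with $B(121)=4$, while $\binom{3}{1}B(1)B(21)=3\cdot 1\cdot 2=6$. Or take $x=y=(1,2)$: $B(1212)=8$, but $\binom{4}{2}B(12)^2=24$. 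The intuition behind your formula---that interleaving a building of $x$ with a building of $y$ gives a building of $xy$---only yields an inequality $B(xy)\le\binom{m+n}{m}B(x)B(y)$, since the intermediate concatenated subsequences need not be proper colorings. Also, $B(xy)$ is counted $q-1$ or $q-2$ times in the sum, not $q$: only $a$ with $a\ne x_m$ and $a\ne y_1$ contribute.

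The paper handles the bridge term without any companion identity, by a case split on whether $x_m=y_1$. If $x_m=y_1$, then $xy$ is not a proper coloring, so $B(xy)=0$ and the term simply vanishes. If $x_m\ne y_1$ (say $x_m=1$, $y_1=2$, $q=4$), the sum effectively ranges over $a\in\{3,4\}$, and the two resulting copies of $B(xy)$ are absorbed one each into the boundary terms $i=m$ and $j=1$; writing $x=\widehat x_m 1$ one uses
\[
\sum_{a=3,4}B(\widehat x_m\,a\,y)+B(xy)=\sum_{a\ne 2}B(\widehat x_m\,a\,y)=\sum_{a\in[4]}B(\widehat x_m\,a\,y),
\]
to which the inductive hypothesis applies directly. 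This absorption is precisely where $q=4$ enters: for general $q$ there are $q-2$ copies of $B(xy)$, two absorb as above, and an uncontrolled surplus $(q-4)B(xy)$ remains. The $q=3$ case uses the same two-case split with a length-two bridge. Your outline is missing this case analysis and absorption step, which is the actual content of the argument.
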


The proofs of \cref{build-cons,build-dep} are elementary, and are very
similar to each other.  However, in another respect the two results are very
different: \cref{build-dep} says something special about $q=3,4$ that
apparently has no simple analogue for other $q$.  For example, for $q\neq 4$
the ratio of $\sum_{a\in [q]} B(xay)$ to $B(x)B(y)$ no longer depends only on
the lengths of $x$ and $y$.  Also see \cref{two-point} at the end of this
section.

\begin{cor}\label{build-tot}
Let $q\geq 2$ and $n\geq 1$.  The total number of proper buildings of all
words of length $n$ is
$$\Sigma(q,n):=\sum_{x\in[q]^n} B(x) =
%q(2q-2)(3q-4)\cdots(nq-2n+2),$$
\prod_{k=1}^n \bigl[k(q-2)+2\bigr],$$
 which equals $2^n$, $(n+2)!/2$, and $(n+1)!\,2^n$ in the cases
$q=2,3,4$ respectively.
\end{cor}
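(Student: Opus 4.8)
The plan is to obtain the formula by a one-step induction on $n$ driven by \cref{build-cons}, followed by a routine reindexing of the product and three elementary evaluations. There is no real difficulty here; the only point needing a moment's care is the base of the induction.

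First I would set up the recursion. Summing the identity of \cref{build-cons} over all words $x\in[q]^n$ gives
$$\Sigma(q,n+1) = \sum_{x\in[q]^n}\sum_{a\in[q]} B(xa) = \sum_{x\in[q]^n}\bigl[n(q-2)+q\bigr]B(x) = \bigl[n(q-2)+q\bigr]\,\Sigma(q,n),$$
valid for all $n\geq 0$, where I interpret $\Sigma(q,0)=B(\varnothing)=1$ for the empty word (the empty permutation is vacuously a proper building of it, and is the only one). As a sanity check at $n=1$, each single-symbol word satisfies $B(a)=1$, so $\Sigma(q,1)=q$, consistent both with the recursion applied to $\Sigma(q,0)=1$ and with the claimed product $\prod_{k=1}^{1}[k(q-2)+2]=q$.

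Iterating the recursion yields $\Sigma(q,n)=\prod_{j=0}^{n-1}\bigl[j(q-2)+q\bigr]$. To match this with the asserted formula I would substitute $k=j+1$: then $j(q-2)+q=(k-1)(q-2)+q=k(q-2)+2$, and as $j$ runs over $0,\dots,n-1$ the index $k$ runs over $1,\dots,n$, so $\Sigma(q,n)=\prod_{k=1}^{n}\bigl[k(q-2)+2\bigr]$.

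Finally I would plug in $q=2,3,4$. For $q=2$ every factor equals $2$, giving $2^n$. For $q=3$ the $k$th factor is $k+2$, so the product is $3\cdot 4\cdots(n+2)=(n+2)!/2$. For $q=4$ the $k$th factor is $2(k+1)$, so the product is $2^n\cdot 2\cdot 3\cdots(n+1)=(n+1)!\,2^n$. The only step that is not completely mechanical is the handling of the empty word in the base case of the induction (equivalently, one may simply start the induction at $n=1$, where $\Sigma(q,1)=q$ is checked directly).
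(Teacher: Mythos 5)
Your proof is correct and follows the same route as the paper: sum \cref{build-cons} over all words to get the one-step recursion for $\Sigma(q,n)$, iterate, and reindex the product (the paper's parenthetical remark about the last factor is precisely your substitution $j(q-2)+q=k(q-2)+2$). The extra care you take with the base case $\Sigma(q,0)=1$ and the sanity check at $n=1$ are sound and harmless.
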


\begin{proof}
This is immediate from \cref{build-cons}.    (The last factor in the product
is $(n-1)(q-2)+q=n(q-2)+2$).
\end{proof}

\begin{proof}[Proof of \cref{build-cons}]
We use induction on $n$. The identity is immediate when $n=0$ (so that $x$ is
the empty word and $B(x)=1$). Suppose that $n\geq 1$ and that it holds for
$n-1$.  We can assume that $x$ is a proper coloring, otherwise both sides are
$0$.  By \cref{build-rec},
\begin{equation}\label{cons-sum}
 \sum_{a\in [q]} B(xa) =
 \sum_{a\neq x_n}\biggl[\sum_{i=1}^n B(\widehat{x}_i a)+B(x)\biggr].
\end{equation}
We now consider each of the terms on the right. For $i\leq n-1$ the inductive
hypothesis gives
\begin{align*}
\sum_{a\neq x_n}
B(\widehat{x}_i a)&=\bigl[(n-1)(q-2)+q\bigr]B(\widehat{x}_i),
\intertext{while for the case $i=n$ we have}
\sum_{a\neq x_n} B(\widehat{x}_n a)+B(\hat x_n x_n)
&=\bigl[(n-1)(q-2)+q\bigr]B(\widehat{x}_n).
\end{align*}
Since $\hat x_n x_n=x$, and $\sum_{a\neq x_n}B(x)=(q-1)B(x)$, the right side
of \eqref{cons-sum} therefore becomes
$$\bigl[(n-1)(q-2)+q\bigr]\sum_{i=1}^n B(\widehat{x}_i)+(q-2)B(x),$$
which by \cref{build-rec} equals $[n(q-2)+q]B(x).$
\end{proof}

\begin{proof}[Proof of \cref{build-dep}, case $q=4$]
We use induction. When $n=0$ the identity is precisely \cref{build-cons} with
$q=4$, and the case $m=0$ follows by symmetry. Therefore, suppose that
$m,n\geq 1$, and that the identity holds for all $x$ and $y$ with lengths
totalling less than $m+n$.  Assume that $x$ and $y$ are proper colorings,
otherwise the identity holds trivially.

We consider two cases (and the crucial consequence of the assumption $q=4$
will be that they give identical results).  First suppose $x_m=y_1$, and
without loss of generality suppose both are equal to $1$. \cref{build-rec}
gives
\begin{equation}\label{dep-sum}
 \sum_{a\in [4]} B(xay) =
 \sum_{a\neq 1}\biggl[\sum_{i=1}^m B(\widehat{x}_i ay)
 +B(xy)+\sum_{j=1}^n B(x a\widehat{y}_j)\biggr].
\end{equation}
Considering the first of the three terms on the right, the inductive
hypothesis gives for each $i$,
$$\sum_{a\neq 1} B(\widehat{x}_i ay) = 2\binom{m+n+1}{m}
B(\widehat{x}_i)B(y).$$
 Similar reasoning applies to the third term, while
$B(xy)=0$ since $xy$ is not a proper coloring.  Therefore, using
\cref{build-rec} again, the right side of \eqref{dep-sum} equals
\begin{equation}\label{2-binoms}
2\binom{m+n+1}{m}B(x)B(y)+ 2\binom{m+n+1}{m+1}B(x)B(y),
\end{equation}
which equals the right side of the claimed identity.

For the second case, suppose $x_m\neq y_1$, and say $x_m=1$ and $y_1=2$. Then
\begin{equation}\label{dep-sum-neq}
 \sum_{a\in [4]} B(xay) =
 \sum_{a=3,4}\biggl[\sum_{i=1}^m B(\widehat{x}_i ay)
 +B(xy)+\sum_{j=1}^n B(x a\widehat{y}_j)\biggr].
\end{equation}
For $i\leq m-1$ we have, similarly to the previous case,
\begin{align*}
\sum_{a=3,4}
B(\widehat{x}_i ay) &= 2\binom{m+n+1}{m} B(\widehat{x}_i)B(y).
\intertext{On the other
hand, for $i=m$, the inductive hypothesis gives}
\sum_{a=3,4} B(\widehat{x}_m ay) + B(xy)
&=\sum_{a\neq 2} B(\widehat{x}_m ay)\\
 &= 2\binom{m+n+1}{m} B(\widehat{x}_m)B(y).
\end{align*}
The last of the three terms on the right of \eqref{dep-sum-neq} can be
 treated similarly, and of course the middle term yields $\sum_{a=3,4}
B(xy)=2B(xy)$. (This is the key point where $q=4$ is used -- for general $q$
we would be left with an additional term $(q-4)B(xy)$, which was not present
in the first case above.)  Therefore the right side of \eqref{dep-sum-neq}
equals \eqref{2-binoms}, as in the previous case.
\end{proof}

\begin{proof}[Proof of \cref{build-dep}, case $q=3$]
The proof is similar to the $q=4$ case, and is again by induction. When $m$
or $n$ is $0$, the result follows by applying \cref{build-cons} (twice).
Therefore suppose $m,n\geq 1$ and that the result holds for all smaller
$m+n$.  Again we can assume $x$ and $y$ are proper.

By \cref{build-rec},
\begin{multline}\label{3col-sum}
\sum_{a,b\in[3]} B(xaby)=\\
\sum_{x_m\neq a\neq b \neq y_1}\biggl[ \sum_{i=1}^m B(\widehat{x}_i aby)
+B(xby)+B(xay)+ \sum_{j=1}^n B(x ab\widehat{y}_j). \biggr]
\end{multline}
As in the previous proof, for $i\leq m-1$ the inductive hypothesis gives
$$\sum_{x_m\neq a\neq b \neq y_1} B(\widehat{x}_i aby)
= 2\binom{m+n+3}{m+1} B(\widehat{x}_i) B(y).
$$
The $i=m$ term must be combined with the next term, $B(xby)$, and we again
consider two cases.

Firstly, suppose $x_m=y_1=1$ (say).  Then
\begin{align*}
&\sum_{1\neq a\neq b \neq 1} B(\widehat{x}_m aby) +
\sum_{1\neq a\neq b \neq 1} B(xby)\\
=&\sum_{ab=23,32} B(\widehat{x}_m aby)
+\sum_{b=2,3} B(\widehat{x}_m 1by)\\
=&\sum_{a,b\in[3]} B(\widehat{x}_m aby)=2\binom{m+n+3}{m+1} B(\widehat{x}_m) B(y),
\end{align*}
by the inductive hypothesis.

Secondly, suppose $x_m=1\neq 2=y_1$ (say).  Then
\begin{align*}
&\sum_{1\neq a\neq b \neq 2} B(\widehat{x}_m aby) +
\sum_{1\neq a\neq b \neq 2} B(xby)\\
=&\sum_{ab=21,23,31} B(\widehat{x}_m aby)
+B(\widehat{x}_m 13y)\\
=&\sum_{a,b\in[3]} B(\widehat{x}_m aby)=2\binom{m+n+3}{m+1} B(\widehat{x}_m) B(y).
\end{align*}

The third and forth terms appearing on the right of \eqref{3col-sum} can be
treated symmetrically, so by \cref{build-rec} the entire sum becomes
$$2\biggl[\binom{m+n+3}{m+1}+\binom{m+n+3}{m+2}\biggr]B(x)B(y),$$
which equals the required expression.
\end{proof}

The following fact is not needed for our main results, but
it will imply that the $q$-color analogue of our processes
is not finitely dependent for $q\notin\{ 3,4\}$.

\begin{prop}\label{two-point}
Let $q\geq 2$ and $n\geq 0$.  We have
\begin{equation}\label{two-point-eq}
\sum_{x\in[q]^n}\bigl[B(1x2)-B(1x1)\bigr]=2\prod_{k=1}^n\bigl[k(q-2)-2\bigr].
\end{equation}
\end{prop}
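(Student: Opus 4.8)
The plan is to prove \eqref{two-point-eq} by induction on $n$, peeling off the final arrival exactly as in the proofs of \cref{build-cons,build-dep}. Write $D(n):=\sum_{x\in[q]^n}\bigl[B(1x2)-B(1x1)\bigr]$. Since $B(12)=2$ and $B(11)=0$, the base case is $D(0)=2$, matching the empty product; the goal is then to show that $D(n)=\bigl[n(q-2)-2\bigr]D(n-1)$ for every $n\geq 1$, which gives \eqref{two-point-eq} on unrolling the recursion.

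Fix $n\geq 1$. Only words $x$ that are proper colorings with $x_1\neq 1$ contribute to $D(n)$ (otherwise $B(1x2)=B(1x1)=0$), so fix such an $x$ and apply \cref{build-rec} to each of the length-$(n+2)$ words $1x2$ and $1x1$, grouping proper buildings by the position of the final arrival. Removing the leading $1$ contributes $B(x2)$ resp.\ $B(x1)$; removing the trailing symbol contributes $B(1x)$ in both cases; removing the $i$th symbol of $x$ contributes $B(1\widehat x_i 2)$ resp.\ $B(1\widehat x_i 1)$. The only subtlety is that exactly one of $1x2$, $1x1$ fails to be a proper coloring when $x_n\in\{1,2\}$; treating the cases $x_n=1$, $x_n=2$, $x_n\notin\{1,2\}$ separately, and using that $B$ vanishes on non-proper words (so that, e.g., $B(x2)=0$ and all but one of the terms $B(1\widehat x_i 2)$ collapse to $B(1x)$ when $x_n=2$), one finds in every case that
\begin{multline*}
B(1x2)-B(1x1)=\bigl[B(x2)-B(x1)\bigr]+\sum_{i=1}^n\bigl[B(1\widehat x_i 2)-B(1\widehat x_i 1)\bigr]\\
{}+2\bigl(\ind[x_n{=}1]-\ind[x_n{=}2]\bigr)B(1x).
\end{multline*}

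Summing over all proper $x$ with $x_1\neq 1$, one evaluates the three pieces, each of which collapses to a constant times $D(n-1)$. First, relabelling colors is a $B$-preserving bijection of $[q]^n$, so $\sum_{x\in[q]^n}\bigl[B(x2)-B(x1)\bigr]=0$; hence the sum over proper $x$ with $x_1\neq 1$ equals minus the sum over proper $x$ with $x_1=1$, and writing $x=1z$ identifies the latter with $D(n-1)$, so the first piece is $-D(n-1)$. Second, reindex the pairs $(x,i)$ by triples $(y,c,i)$, where $y=\widehat x_i$ and $c=x_i$ is reinserted at position $i$; the summand depends only on $y$ and vanishes unless $y$ is a proper coloring with $y_1\neq 1$, and for each such $y$ a short count over the insertion position $i$ (positions $1$, $n$, and $2,\dots,n-1$) shows that the number of admissible $(c,i)$ equals $(n-1)(q-2)+(q-1)$, independent of $y$, so the second piece is $\bigl[(n-1)(q-2)+(q-1)\bigr]D(n-1)$. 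Third, writing $x=z1$ or $x=z2$ turns the last piece into $2\sum_{z\in[q]^{n-1}}B(1z1)-2\sum_{z\in[q]^{n-1}}B(1z2)=-2D(n-1)$. Adding the three contributions,
\[
D(n)=\bigl[-1+(n-1)(q-2)+(q-1)-2\bigr]D(n-1)=\bigl[n(q-2)-2\bigr]D(n-1),
\]
as needed. The main obstacle is the bookkeeping behind the displayed identity --- checking that the boundary cases $x_n\in\{1,2\}$ all reduce to the same expression with precisely the $\pm 2B(1x)$ correction --- together with verifying that the insertion count in the second piece is genuinely independent of $y$ (and remains correct when $n=1$, where the position split degenerates); everything else is routine.
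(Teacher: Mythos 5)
Your argument is correct, and its skeleton coincides with the paper's: both proofs proceed by induction, both peel off the final arrival via \cref{build-rec}, and both establish the first-order recursion $D(n)=[n(q-2)-2]D(n-1)$ with base $D(0)=2$. Where you diverge is in the bookkeeping. The paper derives a coupled pair of recursions for the two sums $B(1*^n1)=\sum_xB(1x1)$ and $B(1*^n2)=\sum_xB(1x2)$ separately --- using the relabeling identity $\sum_{a\neq1}B(a*^{n-1}1)=(q-1)B(1*^{n-1}2)$ for the end terms and a fixed-$i$ insertion count $(q-2)$ for the middle ones --- and then subtracts them. You instead work with the difference $B(1x2)-B(1x1)$ throughout, which requires the compensating term $2(\ind[x_n{=}1]-\ind[x_n{=}2])B(1x)$ to absorb the boundary cases where $1x1$ or $1x2$ fails to be proper; you then kill the first piece with the clean color-swap observation $\sum_x[B(x2)-B(x1)]=0$, and compute the middle piece by summing the insertion count over $i$ (correctly getting $(n-1)(q-2)+(q-1)$, with the $n=1$ degeneracy checked). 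Your route is slightly more economical for the stated identity; the paper's yields, as a by-product, the two individual recursions, which carry some additional information. I verified that all three pieces evaluate as you claim, and the final coefficient $-1+(n-1)(q-2)+(q-1)-2=n(q-2)-2$ matches.
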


\begin{proof}
We use $*$'s to denote unrestricted symbols, so
$B(a *^n b):=\sum_{x\in[q]^n} B(axb)$, etc.  Let $n\geq 1$.  By \cref{build-rec},
\begin{align*}
B(1 *^n 1)&= \sum_{\substack{x\in[q]^n:\\1x1 \text{ proper}}}\Bigl[
B(x1)+\sum_{i=1}^n B(1\hat x_i 1) +B(1x) \Bigr].
\end{align*}
But, by symmetry,
$$\sum_{\substack{x\in[q]^n:\\1x1 \text{ proper}}} B(x1)
=\sum_{a\neq 1} B(a *^{n-1} 1)=(q-1) B(1 *^{n-1} 2),$$ and the term $B(1x)$
can be treated similarly.  On the other hand,
$$\sum_{\substack{x\in[q]^n:\\1x1 \text{ proper}}} B(1\hat x_i 1)
=(q-2) B(1 *^{n-1} 1),$$ since each proper coloring of the form $1*^{n-1} 1$
arises from exactly $q-2$ proper colorings of the form $1 *^n 1$ by deleting
the $(i+1)$st symbol -- the two neighboring colors must be distinct, so there
are $q-2$ choices for the symbol between them that is deleted.

Therefore,
\begin{align*}
B(1 *^n 1)&=n(q-2)\,B(1 *^{n-1} 1) +2(q-1)\,B(1*^{n-1}2),
\intertext{and a simlar argument gives}
B(1 *^n 2)&= (n+2)(q-2)\,B(1*^{n-1}2) + 2\,B(1 *^{n-1} 1).
\end{align*}
Subtracting yields
$$B(1*^n2)-B(1*^n1)=
\bigl(n(q-2)-2\bigr)\bigl[B(1*^{n-1}2)-B(1*^{n-1}1)\bigr],$$
and induction finishes the proof.
\end{proof}

\section{The colorings}
%%%%%%%%%%%%%%%%%%%%%%%%%%%%%%%%%%%%%%%%%%%%%%%%%%%%%%%%%%%%%%%%%%%%%%%%%%%%%%%
\label{colorings}

\begin{proof}[Proof of \cref{main}]
Recall that $B(x)$ denotes the number of proper buildings of a word $x$. To
construct the $4$-coloring, we define
\begin{equation}\label{prob}
P(x)=P_4(x):=\frac{B(x)}{\Sigma(4,n)}
=\frac{B(x)}{(n+1)!\;2^n},\quad x\in[4]^n.
\end{equation}
We claim that there is a stationary $1$-dependent $4$-coloring $X$ with
cylinder probabilities given by
\begin{equation}\label{cylinder}
\P\bigl[(X_{i+1},\ldots,X_{i+n})=x\bigr]=P(x),\quad i,n\in\Z, \; x\in[4]^n.
\end{equation}

\cref{build-dep} gives that for all words $x$ and $y$,
\begin{equation}\label{dep-p}
\sum_{a\in[4]} P(xay) = P(x)P(y).
\end{equation}
Taking $y$ or $x$ to be the empty word $\emptyset$ gives respectively
$\sum_{a\in[4]} P(xa) = P(x)$ and $\sum_{a\in[4]} P(ay) = P(y)$, so
\eqref{cylinder} gives a consistent family of measures. We have
$P(\emptyset)=1$, and of course we have $P(x)\geq 0$ for all $x$. Thus by the
Kolmogorov extension theorem (see e.g.\ \cite[Theorem~6.16]{kallenberg})
there exists a process $X$ satisfying \eqref{cylinder}, and \eqref{cylinder}
immediately shows that it is stationary.  The process $X$ is a $4$-coloring
since $P(x)=0$ when $x$ is not a proper coloring, and \eqref{dep-p} gives
that it is $1$-dependent.

\sloppypar The construction of the stationary $2$-dependent $3$-coloring is
essentially identical.  We take
\begin{equation}\label{prob3}
P_3(x):=\frac{B(x)}{\Sigma(3,n)}
=\frac{2 B(x)}{(n+2)!},\quad x\in[3]^n.
\end{equation}
Consistency follows from \cref{build-cons}, and $2$-dependence from \cref{build-dep}.
\end{proof}

\begin{proof}[Proof of \cref{properties}(i,v)]
The symmetry and conditioning properties are immediate from
\eqref{prob},\eqref{prob3}, and the definition of proper
buildings.
\end{proof}

Via \cref{build-cons}, the above proof in fact shows that
for every $q\geq 2$ there is a symmetric, reversible,
stationary $q$-coloring $X$ given by
$$\P\bigl[(X_{i+1},\ldots,X_{i+n})=x\bigr]=\frac{B(x)}{\Sigma(q,n)}.$$
It is immediate that this matches the description of the process via
conditioning given in the introduction.  The event $E$ that the random
permutation $\sigma$ is a proper building of the random word $Z$ has
probability $\Sigma(q,n)/(n!q^n)$, which is $(n+1)/2^{n}$ for $q=4$ and
$\binom {n+2}2 /3^n$ for $q=3$.

%\enlargethispage*{1.5cm}
 Here is an alternative description
of this process that does not involve conditioning, and
that provides a practical and efficient method for exact
sampling.  Start with a sequence of length $1$ consisting
of a uniformly random element of $[q]$.  At each step,
insert a new color, in such a way that the sequence is
always a proper coloring, as follows.  Given that the
current sequence has length $n-1$, choose one of the $n-2$
locations between two consecutive elements each with
probability \mbox{$(q-2)/[n(q-2)+2]$}, or one of the $2$
end locations each with probability $(q-1)/[n(q-2)+2]$.
Then insert a color in the chosen location, chosen
uniformly from among those that will still result in a
proper coloring; there are $q-2$ choices at an internal
location, or $q-1$ at an end.  It is easily seen that the
resulting sequence after $n-1$ such steps has the same law
as $(X_1,\ldots,X_n)$. See \cite{mallows-shepp,nakata} for
a somewhat related process.

\sloppypar \cref{two-point} shows that for $q\notin\{3,4\}$ the process is
{\em not} $k$-dependent for any $k$.  Indeed, the right side of
\eqref{two-point-eq} is positive for all $q\geq 5$ and $n\geq 0$ (the product
over $k$ begins $(q-4)(2q-6)(3q-8)\cdots$), so the events $X_i=1$ and $X_j=1$
are strictly negatively correlated for $i\neq j$ when $q\geq 5$.  (The case
$q=2$ is trivial).

\section{Block-factors}
%%%%%%%%%%%%%%%%%%%%%%%%%%%%%%%%%%%%%%%%%%%%%%%%%%%%%%%%%%%%%%%%%%%%%%%%%%%%%%%
\label{block}

\begin{proof}[Proof of \cref{block-factor}]
Let $U_1,\ldots,U_{r+1}$ be i.i.d.\ random variables, and let $f:\R^r\to [q]$
be a measurable function.  We claim that for all $r,q\geq 1$,
\begin{equation}\label{pos-prob}
\P\bigl[f(U_1,\ldots,U_r)=f(U_2,\ldots,U_{r+1})\bigr]>0.
\end{equation}
Once this is proved, the required result follows immediately.

We prove \eqref{pos-prob} by induction on $r$.  For $r=1$ it is immediate,
since $f(U_1)$ and $f(U_2)$ are i.i.d.  Assume that it holds for $r-1$ and
all $q$.  Now for $f:\R^r\to [q]$ define
$$S(u_1,\ldots,u_{r-1}):=
\Bigl\{ a\in [q]: \P\bigl[f(u_1,\dots,u_{r-1},U_r)=a\bigr]>0\Bigr\},$$ i.e.,
the set of values that $f$ can take with positive probability given its first
$r-1$ arguments.  Since the function $S$  takes at most $2^q$ values, the
inductive hypothesis gives
$$\P\bigl[S(U_1,\ldots,U_{r-1})=S(U_2,\ldots,U_r)\bigr]>0.$$
Moreover, since a.s.\ $f(U_1,\ldots,U_r)\in S(U_1,\ldots,U_{r-1})$, we can
find deterministic $A\subseteq [q]$ and $a\in A$ such that
$$\P\bigl[S(U_1,\ldots,U_{r-1})=S(U_2,\ldots,U_r)=A,\; f(U_1,\ldots,U_r)=a\bigr]>0.$$
Using the definition of $S(U_2,\ldots,U_r)$, and the fact that $U_{r+1}$ is
independent of $(U_1,\ldots,U_r)$, the conditional probability that
$f(U_2,\ldots,U_{r+1})=a$ given the above event is positive. Thus,
\[
\P\bigl[f(U_1,\ldots,U_{r})=f(U_2,\ldots,U_{r+1})\bigr]>0.\qedhere
\]
\end{proof}

By replacing ``$>0$'' with ``$>\epsilon$'' in the
definition of $S$, the above proof can be made
quantitative, giving that the left side of \eqref{pos-prob}
is at least
$$\frac 1{2^{2\rule{0pt}{7pt}^{\iddots
\rule{0pt}{9pt}^{2^{4q}}}}},$$ where there are $r-1$
exponentiation operations in the tower.  The tower-function
form of this bound is sharp.  See \cite{hsw} for more
information.

% For
%instance, there is a simple function $f$ with $q=6$ and any $r\geq 1$ for
%which the probability in \eqref{pos-prob} is at most
%$1/(2^{2^{\cdots{2^1}}})$, where there are $r-1$ exponentiations.  These
%facts imply the result mentioned in the introduction that if a $q$-coloring
%of $\Z$ is a finitary factor of an i.i.d.\ process then the coding radius of
%the factor must have at least tower function tails, with this bound being
%tight.

\section{Hilbert spaces and hidden-Markov processes}
%%%%%%%%%%%%%%%%%%%%%%%%%%%%%%%%%%%%%%%%%%%%%%%%%%%%%%%%%%%%%%%%%%%%%%%%%%%%%%%
\label{hilbert}

In this section we present the Hilbert space connection that leads to
\cref{analysis}, and from which we will also deduce \cref{hidden} concerning
hidden-Markov processes.

Before doing this we give the much simpler proof of a special case of
\cref{hidden}: a stationary $k$-dependent $q$-coloring cannot itself be a
Markov chain. Indeed, let $\mathbf{P}=(P_{a,b})_{a,b\in[q]}$ be its
transition matrix.  Since $X_n$ is independent of $X_0$ for $n>k$, the
conditional law of $X_n$ given $X_0$ is simply the stationary distribution of
the Markov chain, so in particular the conditional laws of $X_{k+1}$ and
$X_{k+2}$ given $X_0$ are identical, hence $P^{k+1}=P^{k+2}$, i.e.\
$P^{k+1}(1-P)=0$. Therefore the eigenvalues of $P$ are precisely $0$ and $1$.
However, since $X$ is a proper coloring we have $P_{a,a}=0$ for all $a$, so
$P$ has trace $0$, and its eigenvalues (with multiplicities) sum to $0$, a
contradiction.

The proof of \cref{hidden} follows a broadly similar strategy, but requires a
more elaborate set-up, which also gives \cref{analysis}.  Let $X=(X_i)_{i\in
\Z}$ be a stationary process taking values in $\Omega:=[q]^\Z$, with law
$\mu$.  Let $L^2$ be the Hilbert space of real $L^2(\mu)$ functions on
$\Omega$ (which is separable by the Stone-Weierstrass and Lusin theorems).
Let $S:\Omega\to\Omega$ be the shift map given by $S(x)_j=x_{j-1}$, and
define the shift operator $T:L^2\to L^2$ by $(Tf)(x)=f(S^{-1}(x))$. Let $A$
be the space of functions $f\in L^2$ that depend only on $x_0,x_1,\ldots$,
and let $B$ be the space of functions $f\in L^2$ that depend only on
$\ldots,x_{-1},x_0$. Thus $T A\subseteq A$ and $TB \supseteq B$.  Let $P_B$
denote orthogonal projection in $L^2$ onto $B$, or in probabilistic terms,
$P_B(f)=\E (f\mid \ldots ,X_{-1},X_0)$. Define
$$U:= \overline{P_B A}$$
(where the bar denotes closure), and define $R$ to be the
restriction
$$R:= (P_B T)|_U.$$

\begin{lemma}\label{reduction}
Let $X=(X_i)_{i\in \Z}$ be a stationary process taking values in
$[q]^\Z$. Define the Hilbert space $U$ and the operator $R$ as above.
\begin{ilist}
\item We have $RU\subseteq U$.
\item If $X$ is $k$-dependent, then $R^n U$ is the space of constant
    functions, for all $n>k$.
\item If $X$ is a $q$-coloring, then $U$ has an orthogonal decomposition
$$U=U_1\oplus \cdots \oplus U_q$$
into closed linear subspaces such that $RU_j$ is orthogonal to $U_j$ for
each $j$.
\end{ilist}
\end{lemma}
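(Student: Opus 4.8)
To prove \cref{reduction}, the plan is to reduce all three parts to a single algebraic identity relating $R$, the shift $T$ and the projection $P_B$ on the subspace $A$, after which (i)--(iii) follow almost mechanically. The one ingredient that is not pure bookkeeping is the \emph{key identity}
\[
R(P_B f)=P_B(Tf)\qquad(f\in A),\qquad\text{i.e.}\qquad P_BTP_Bf=P_BTf ,
\]
and this is where the asymmetry between $A$ and $B$ enters. To prove it, note that $f-P_Bf\perp B$; since $\mu$ is shift-invariant, $T$ is unitary, so $T(f-P_Bf)\perp TB$; and because $TB\supseteq B$ this forces $T(f-P_Bf)\perp B$, i.e.\ $P_B\bigl(T(f-P_Bf)\bigr)=0$. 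As $TA\subseteq A$ we have $Tf\in A$, so the identity iterates: $R^n(P_Bf)=P_B(T^nf)$ for all $f\in A$ and $n\ge0$. Part (i) is then immediate: $R=(P_BT)|_U$ is bounded (norm $\le1$, being a projection composed with a unitary), and $R(P_BA)=P_B(TA)\subseteq P_BA$, so by continuity $RU=R\,\overline{P_BA}\subseteq\overline{P_BA}=U$.

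For part (ii) I would first check, directly from $(S^{-n}x)_i=x_{i+n}$, that $T^nA$ is the space of functions of the coordinates $x_n,x_{n+1},\dots$ only. If $X$ is $k$-dependent, then (taking $i=1$ in the definition) $(\dots,X_{-1},X_0)$ is independent of $(X_{k+1},X_{k+2},\dots)$, hence of $(X_n,X_{n+1},\dots)$ whenever $n>k$; therefore $P_B(T^nf)=\E\bigl(T^nf\mid\dots,X_{-1},X_0\bigr)$ is a constant function for every $f\in A$. Thus $R^n(P_BA)=P_B(T^nA)$ consists of constants, and since $R^nU$ is squeezed between $R^n(P_BA)$ and its closure, $R^nU$ consists of constants as well; conversely $T1=1$ and $1=P_B1\in P_BA\subseteq U$, so $R$ fixes $1$ and the constants lie in $R^nU$. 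Hence $R^nU$ equals the line of constants for all $n>k$.

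For part (iii) I would bring in multiplication operators. Set $e_j:=\ind[x_0=j]$ and let $M_j$ be multiplication by $e_j$ on $L^2$; these are orthogonal projections with $M_jM_\ell=0$ for $j\ne\ell$ and $\sum_jM_j=I$. Since $e_j$ is bounded and $\sigma(\dots,X_{-1},X_0)$-measurable, pulling it out of the conditional expectation gives $M_jP_Bf=P_B(e_jf)$; and $e_jf\in A$ when $f\in A$, so $M_jU\subseteq U$. Hence $U_j:=M_jU$ are mutually orthogonal closed subspaces with $U=U_1\oplus\cdots\oplus U_q$. To show $RU_j\perp U_j$, take $u,w\in U_j$, so $u=e_ju$, $w=e_jw$, and $w\in U\subseteq B$; then, using self-adjointness of $P_B$ and $P_Bw=w$,
\[
\langle Ru,w\rangle=\langle P_BT(e_ju),e_jw\rangle=\langle T(e_ju),e_jw\rangle .
\]
A one-line computation gives $T(e_ju)=\ind[x_1=j]\cdot Tu$, so this inner product is $\E\bigl[\ind[X_0=j]\,\ind[X_1=j]\,(Tu)\,w\bigr]=0$, because $X$ is a $q$-coloring and so $X_0\ne X_1$ almost surely. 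This gives $RU_j\perp U_j$ and completes the argument. The only place any thought is needed is the key identity and its iteration; after that, parts (i), (ii), (iii) come respectively from $TA\subseteq A$, from $k$-dependence, and from the constraint $X_0\ne X_1$.
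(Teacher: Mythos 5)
Your proof is correct and follows essentially the same route as the paper: the key identity $P_BTP_B=P_BT$ (proved via $T(f-P_Bf)\perp TB\supseteq B$), part (i) from $TA\subseteq A$ and continuity, part (ii) from $k$-dependence applied to $P_BT^nA$, and part (iii) from the orthogonality of color-supported functions under $T$. Your definition $U_j:=M_jU$ via multiplication operators is a cosmetic reformulation of the paper's $U_j:=\overline{P_B(V_j\cap A)}$ (the two agree because $M_jP_B=P_BM_j$ on $A$ and $M_j$ is idempotent with $M_jU\subseteq U$, which makes $M_jU=U\cap\ker(I-M_j)$ closed), and your explicit check that the constants lie in $R^nU$ fills a small gap that the paper glosses over.
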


\begin{proof}
We claim first that
\begin{equation}\label{conjugate}
P_B T P_B = P_B T.
\end{equation}
Indeed, let $f\in L^2$ and $g=P_B f$. Then $g-f$ is orthogonal to $B$. Since
$T$ is an isometry, $T(g-f)$ is orthogonal to $TB$. Since $TB\supseteq B$, in
particular $T(g-f)$ is orthogonal to $B$. Thus, $P_B T (g-f)=0$. This gives
\eqref{conjugate}.

Now suppose that $f\in A$ and $g=P_B f$.  Then
\eqref{conjugate} gives $R g=R P_B f=P_B T P_B f=P_B T f
\in P_B A$.  Thus $R$ maps $P_BA$ into itself.  Since $R$
is continuous, the same applies to the closure $U$,
establishing~(i).

A similar argument to the above gives $R^n U\subseteq
\overline{P_B T^n A}$ for every integer $n\geq 1$.  Now if
$X$ is $k$-dependent then $P_B T^n A$ is the space of
constants for all $n>k$, so we obtain (ii).

Finally, let $V_j$ denote the space of functions in $L^2$
that are supported on the set of $x\in \Omega$ such that
$x_0=j$.  Let
$$U_j:= \overline{P_B(V_j\cap A)}.$$
Then $U_j\subseteq V_j$, since $P_B V_j\subseteq V_j$ and
$V_j$ is closed.  The spaces $V_j$ are mutually orthogonal,
therefore so are $U_j$.  Clearly, $A$ is the direct sum of
the subspaces $V_j\cap A$, and therefore $P_B A$ is spanned
by the spaces $P_B(V_j\cap A)$.  Since these are mutually
orthogonal, the same applies to the closures.  So $U$ is
the orthogonal direct sum of the spaces $U_j$.

Now suppose that $X$ is a $q$-coloring; then $V_j$ is
orthogonal to $TV_j$.  To prove (iii) we must show that
$RU_j$ and $U_j$ are orthogonal.  Suppose $f,g\in U_j$.
Then $\langle f, Rg\rangle= \langle f,P_B T g\rangle =
\langle P_B f,T g\rangle = \langle f,T g\rangle = 0$. (Here
we used that $P_B$ is an orthogonal projection and
therefore self-adjoint, and that $f,g\in V_j$ so $f$ and
$Tg$ are orthogonal).  This proves~(iii).
\end{proof}

\begin{proof}[Proof of \cref{analysis}]
This is immediate by \cref{main,reduction}.
\end{proof}

To prove \cref{hidden} we also need the following.

\begin{lemma}\label{finite-dim}
If $X$ is a hidden-Markov process then the Hilbert space $U$ defined above
has finite dimension.
\end{lemma}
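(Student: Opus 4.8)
The plan is to exploit the hidden-Markov structure $X_i = f(M_i)$ with $M$ a stationary Markov chain on a finite state space $\mathcal{S}$, and to show that $U = \overline{P_B A}$ sits inside a finite-dimensional space. The guiding idea is that, for a Markov chain, conditioning the future on the entire past $\ldots, X_{-1}, X_0$ is the same as conditioning on the single variable $M_0$; so any function in $P_B A$ — i.e.\ a conditional expectation $\E(f \mid \ldots, X_{-1}, X_0)$ of a function $f$ depending only on coordinates $x_0, x_1, \ldots$ — should really be a function of $M_0$ (and $X_0$, which is itself a function of $M_0$). Since $M_0$ takes only finitely many values, the space of (square-integrable) functions of $M_0$ is finite-dimensional, of dimension at most $|\mathcal{S}|$, and $U$ will be contained in it.

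Here are the steps I would carry out. First, set up the enlarged process: let $\widehat{X}_i := (M_i, X_i) = (M_i, f(M_i))$, which is determined by $M_i$, and work on the path space of $M$; note that $X$ is a factor of $M$ so $L^2(\mu)$ embeds in $L^2$ of the $M$-process. Second, I would establish the key conditional-independence fact: because $M$ is Markov, for $f \in A$ (depending only on $x_0, x_1, \ldots$, hence a function of $m_0, m_1, \ldots$) we have
\begin{equation*}
\E\bigl(f \mid \ldots, M_{-1}, M_0\bigr) = \E\bigl(f \mid M_0\bigr),
\end{equation*}
and a fortiori $\E(f \mid \ldots, X_{-1}, X_0) = \E\bigl(\E(f\mid \ldots, M_{-1},M_0) \mid \ldots, X_{-1}, X_0\bigr) = \E\bigl(\E(f\mid M_0)\mid \ldots,X_{-1},X_0\bigr)$, which is a function of $M_0$ measurable with respect to the past $\sigma$-field generated by the $X$'s. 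The point is that $P_B f$ is both a function of $M_0$ and $B$-measurable; in either description it lies in a fixed finite-dimensional space. Third, let $F$ be the (at most $|\mathcal{S}|$-dimensional) subspace of $L^2$ consisting of functions of $M_0$ that are also $B$-measurable (equivalently, the image $P_B(\text{functions of }M_0)$). By the second step, $P_B A \subseteq F$; since $F$ is finite-dimensional it is closed, so $U = \overline{P_B A} \subseteq F$, giving $\dim U \le |\mathcal{S}| < \infty$.

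The main obstacle is the second step — proving cleanly that $P_B f$, for $f \in A$, depends only on $M_0$. The subtlety is that $B$ is generated by $\ldots, X_{-1}, X_0$, which carries \emph{less} information than $\ldots, M_{-1}, M_0$, so one cannot directly say $P_B f$ "is a function of $M_0$" without an argument: one must first condition down to $M_0$ using the Markov property on the larger $\sigma$-field, and then observe that conditioning the resulting function of $M_0$ further onto the smaller field $B$ still produces a function of $M_0$ (because $M_0$ determines $X_0$, so $\sigma(M_0)$ and the $X$-past are not "independent modulo $M_0$" in a way that would reintroduce dependence — rather, $B \subseteq \sigma(\ldots,M_{-1},M_0)$ and the conditional expectation of a $\sigma(M_0)$-function onto a sub-field contained in $\sigma(\text{past }M)$ is again $\sigma(M_0)$-measurable only after one more application of the Markov property at time $0$). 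Concretely, I would argue: $\E(f\mid \ldots,M_{-1},M_0)=h(M_0)$ for some $h:\mathcal S\to\R$ by the Markov property; then $P_B f=\E(h(M_0)\mid \ldots,X_{-1},X_0)$; and since $(\ldots,X_{-1},X_0)$ is a function of $(\ldots,M_{-1},M_0)$, while $M_0$ alone already renders $h(M_0)$ measurable, the tower property plus the Markov property give that this conditional expectation is a function of $X_0$ and the conditional law of $M_0$ given the $X$-past — which in the worst case still lives in the finitely many functions indexed by values of $X_0$, but more carefully one checks it is $\sigma(M_0, \text{past})$-measurable and in $B$, landing in the finite-dimensional space $F$ regardless. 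Once this membership $P_B A\subseteq F$ is pinned down, the rest is immediate from finite-dimensionality.
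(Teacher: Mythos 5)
Your strategy matches the paper's --- route through the Markov chain, apply the Markov property so that conditioning on the $M$-past reduces to conditioning on $M_0$, and land in an at-most-$|\mathcal{S}|$-dimensional space --- but your third step contains a genuine error that your final paragraph circles around without resolving. You define $F$ as ``functions of $M_0$ that are also $B$-measurable, equivalently $P_B(\text{functions of }M_0)$.'' These two descriptions are not equivalent, and only the second one is the right space. A function of $M_0$ that is $B$-measurable lies in $\sigma(M_0)\cap B$, whereas $P_B$ applied to a function of $M_0$ lands in $B$ but generally \emph{not} in $\sigma(M_0)$: writing $h(s)=\E(f\mid M_0=s)$, one has $P_B f=\sum_{s\in\mathcal S} h(s)\,\P(M_0=s\mid B)$, and the filter $\P(M_0=s\mid\ldots,X_{-1},X_0)$ typically depends on the entire observed $X$-past and is not a function of $M_0$ (nor merely of $X_0$). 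So the claim that $P_B f$ ``depends only on $M_0$'' is false.

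The argument does survive once you commit to the second description of $F$: every $P_B f$ with $f\in A$ lies in the span of the $|\mathcal S|$ fixed functions $\{\P(M_0=s\mid B):s\in\mathcal S\}$, i.e.\ in $P_B(\sigma(M_0)\text{-functions})$, which is the image of a finite-dimensional space under the continuous linear map $P_B$ and hence is itself finite-dimensional and closed; thus $U\subseteq F$ with $\dim F\le|\mathcal S|$. The paper encapsulates exactly this in operator language: introduce the space $C$ of functions of $\ldots,M_{-1},M_0$; since $B\subseteq C$ one has $P_B=P_B P_C$ as operators, hence $P_B A=P_B(P_C A)$, and $P_C A$ (which \emph{is} genuinely a space of functions of $M_0$, by the Markov property) has dimension at most $|\mathcal S|$. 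Adopting that framing keeps the intermediate space honest and removes the false ``equivalence'' that trips up your write-up.
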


\begin{proof}
Let $X$ be a function of a Markov chain $M$ with state
space $S$.  Consider the earlier space $L^2=L^2(\mu)$
embedded in the possibly larger space of $L^2(\lambda)$
functions on the probability space of $M$, where $\lambda$
is the law of $M$, and where we now interpret a function
$f\in L^2(\mu)$ as the random variable $f(X)$. Let $C$ be
the space of random variables in $L^2(\lambda)$ that depend
only on $\ldots,M_{-1},M_0$, and let $P_C$ denote
orthogonal projection onto $C$. Since $X_i$ is a function
of $M_i$ we have $B\subseteq C$, and therefore
$U=\overline{P_B A} =\overline{P_B P_C A}$, so it suffices
to prove that $P_C A$ is finite-dimensional.  Let $f\in A$.
Then
$$P_C f = \E(f\mid \ldots,M_{-1},M_0)=\E (f\mid M_0),$$
by the Markov property.  But the latter depends only on $M_0$, so it is in
the linear span of the functions $\{\ind[M_0 = s]:s\in S\}$.  Thus $\dim (P_C
A) \leq |S|$.
\end{proof}

\begin{proof}[Proof of \cref{hidden}]
\sloppypar Apply \cref{reduction,finite-dim}.  Since $U$ is
finite-dimensional, choose an orthonormal basis $e_1,\ldots,e_d$ that
comprises orthonormal bases for each $U_j$.  Since $R e_i$ is orthogonal to
$e_i$ for each $i$ we have $\trace(R)=0$.  But \cref{reduction}(ii) implies
that $R$ has exactly one non-zero eigenvalue, a contradiction.
\end{proof}

Hilbert space representations of $k$-dependent processes were also explored
in \cite{devalk}.  We briefly discuss the connection with the above approach.
It is shown in \cite{devalk} that if $X$ is a stationary $k$-dependent
$[q]$-valued stochastic process, there exist a Hilbert subspace $H$ of $L^2$
and bounded linear operators $A_1, \dots, A_q$ on $H$ that encapsulate the
cylinder probabilities of $X$ via
$$P((X_1,\dots, X_n)=x)=\langle A_{x_1}\cdots A_{x_n}\mathbf{1},
\mathbf{1}\rangle$$
with the subsidiary conditions
\begin{gather}
(A_1+\cdots +A_q)^kh=\langle h,\mathbf{1}\rangle\mathbf{1},
\quad h\in H,\label{cond1}\\
(A_1+\cdots +A_q)\mathbf{1}=\mathbf{1},\nonumber\\
(A_1^*+\cdots +A_q^*)\mathbf{1}=\mathbf{1}\nonumber,
\end{gather}
where $\mathbf 1$ is the function that is identically 1. The subspace $H$ is
not given explicitly in \cite{devalk}, though the operators $A_i$ are. The
construction above provides an explicit choice:
$$H=\overline{RU},\quad A_i=P_HI_iT,$$
where $I_i=\ind[X_1=i]$.  (These $A_i$'s are the same as in \cite{devalk}.)
To check \eqref{cond1}, for example, take $h\in H$ and note that, since
$H\subseteq B$, we have $P_BTh=Rh\in R^2U\subseteq H$, so that $P_HTh=Rh$.
Iterating gives $(P_HT)^nh=R^nh$ for $n\geq 1$. Since $A_1+\cdots+A_q=P_HT$,
\cref{reduction}(ii) gives \eqref{cond1}.

\section{One-color marginals}
%%%%%%%%%%%%%%%%%%%%%%%%%%%%%%%%%%%%%%%%%%%%%%%%%%%%%%%%%%%%%%%%%%%%%%%%%%%%%%%
\label{renewal-structure}

\cref{properties}(ii) is a consequence of the following more general result
that in any $1$-dependent coloring, the set of locations of a single color
has a simple structure.

\begin{prop}\label{renewal}\sloppypar
Suppose that $(X_i)_{i\in\Z}$ is a stationary $1$-dependent $q$-coloring.
Suppose $p:=\P(X_0=1)>0$. Then the process $J$ defined by $J_i:=\ind[X_i=1]$
is a renewal process, and its renewal time $T$ (the number of steps between
consecutive $1$'s) has probability generating function
$$G(s):=\E s^T=\frac{p s^2}{1-s+p s^2}.$$
\end{prop}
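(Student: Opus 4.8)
The plan is to show that the single-colour process $J$ inherits enough structure from the $1$-dependence and proper-colouring properties of $X$ to be a renewal process, and then to identify the gap distribution by a short generating-function computation. First I would record the elementary facts. Applying the coordinatewise map $x_i\mapsto\ind[x_i=1]$ to $X$ shows immediately that $J$ is stationary and $1$-dependent. Moreover, since $X$ is a proper colouring we never have $X_i=X_{i+1}=1$, so $J$ has no two consecutive $1$'s; in particular, conditionally on $\{J_0=1\}$ we have $J_{-1}=J_1=0$ almost surely.

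The core step is the regeneration property: conditionally on $\{J_0=1\}$, the past $\sigma$-field $\sigma(J_k:k\le 0)$ and the future $\sigma$-field $\sigma(J_k:k\ge 0)$ are independent. To prove this I would apply $1$-dependence of $X$ with separating coordinate $X_{-1}$ to get that $(\ldots,J_{-3},J_{-2})$ is independent of $(J_0,J_1,J_2,\ldots)$. Conditioning on the event $\{J_0=1\}$, which lies in $\sigma(J_0,J_1,\ldots)$, preserves this independence and does not alter the law of $(\ldots,J_{-2})$, so $(\ldots,J_{-2})$ is conditionally independent of $(J_0,J_1,\ldots)$ given $\{J_0=1\}$. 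Since on $\{J_0=1\}$ the variables $J_{-1}$ and $J_1$ are (almost surely) the constant $0$, the past $\sigma$-field is contained modulo null sets in $\sigma(\ldots,J_{-2})$ and the future $\sigma$-field in $\sigma(J_0,J_1,\ldots)$, which gives the claim. By stationarity the same regeneration holds at every $1$; the usual inductive argument — conditioning successively on $\{T_1=n_1\}$, $\{T_2-T_1=n_2\}$, \ldots and using stationarity to re-centre — then shows that, conditionally on $\{J_0=1\}$, the gaps $T_1,T_2-T_1,\ldots$ between consecutive $1$'s are i.i.d., i.e.\ $J$ is a (stationary) renewal process.

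It remains to compute $G(s)=\E s^T$. Let $u_n:=\P(J_n=1\mid J_0=1)$ denote the renewal function, with $u_0=1$; standard renewal theory gives $\sum_{n\ge 0}u_n s^n=1/(1-G(s))$. Now $u_1=0$, because a proper colouring forbids $X_0=X_1=1$; and for $n\ge 2$, $1$-dependence of $X$ gives $X_0$ independent of $X_n$, hence $J_0$ independent of $J_n$, hence $u_n=\P(J_n=1)=p$. Therefore
$$\frac{1}{1-G(s)}=\sum_{n\ge 0}u_ns^n=1+p\sum_{n\ge 2}s^n=1+\frac{ps^2}{1-s}=\frac{1-s+ps^2}{1-s},$$
and solving for $G$ yields $G(s)=ps^2/(1-s+ps^2)$, as required. (As a sanity check, $G(1)=1$ and $G'(1)=1/p$, consistent with $p\,\E T=1$ for a stationary renewal process.)

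I expect the main obstacle to be the regeneration step: one has to use the proper-colouring hypothesis — not merely $1$-dependence — in order to know that the boundary variables $J_{-1},J_1$ are deterministic given $J_0=1$, which is precisely what prevents the conditioning on $\{J_0=1\}$ from coupling the past and the future. The passage from ``regeneration at each $1$'' to ``i.i.d.\ gaps'' is routine but should be written out with some care about which $\sigma$-fields the conditioning events belong to; the generating-function computation is then immediate.
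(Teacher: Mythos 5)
Your proof is correct and follows essentially the same route as the paper's: you establish regeneration at $\{J_0=1\}$ by combining $1$-dependence (separating across $X_{-1}$) with the forced values $J_{-1}=J_1=0$, and you recover $G$ from $\P(J_0=1,J_n=1)=p^2$ for $n\ge 2$ and $0$ for $n=1$. The only cosmetic differences are that the paper argues via explicit cylinder probabilities (the chain $\P(u010v)=\P(u{*}1{*}v)=p\P(u)\P(v)=p^{-1}\P(u01)\P(10v)$) rather than $\sigma$-fields, and identifies $G$ via $p\sum_{m\ge1}G(s)^m=p^2\sum_{n\ge2}s^n$ rather than via the renewal sequence and $\sum_n u_n s^n=1/(1-G(s))$ — but these are equivalent formulations of the same computation.
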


The fact that $J$ is a renewal process is due to Fuxi Zhang.  We are grateful
for her permission to include it.

\begin{proof}[Proof of \cref{renewal}]
To prove that $J$ is a renewal process we must check that $(J_i)_{i<0}$ and
$(J_i)_{i>0}$ are conditionally independent given $J_0=1$. Since $X$ is a
coloring, $J_0=1$ implies $J_{-1}=J_1=0$.  For a string $u\in\{0,1,*\}^n$ we
write $\P(u):=\P(J_i=u_i \; \forall i\text{ s.t. } u_i\neq *)$ (so that $*$'s
denote unrestricted symbols).  Let $u,v\in\{0,1\}^{n-1}$ be any binary words.
Then
\begin{align*}
\P(u010v)&=\P(u{*}1{*}v)\\
&=p\;\P(u)\;\P(v)\\
&=p^{-1}\;\P(u{*}1)\;\P(1{*}v)\\
&= p^{-1}\;\P(u01)\;\P(10v)
\end{align*}
(where in the 2nd and 3rd equalities we used $1$-dependence of $J$, and in
the 1st and 4th we used the fact that $J$ has no consecutive $1$'s).  Now
dividing through by $p$ shows that the events $(J_{-n},\ldots,J_{-1})=u0$ and
$(J_1,\ldots,J_n)=0v$ are conditionally independent given $J_0=1$, as
required.

Turning to the renewal time distribution, we write
$$p_n=\P(1 0^{n-1} 1)/p.$$
This is the conditional probability given that we have just seen $1$ of
waiting $n$ steps until the next $1$, thus $(p_n)_{n\geq 1}$ is the
probability mass function of the renewal time.  Note that $p_1=0$. The
probability generating function is defined by
$$G(s):=\sum_{n\geq 1} p_n s^n.$$

Since $J$ is a renewal process, for any integers $k_i>0$ we have
\begin{equation}
\P(1 0^{k_1-1} 1 0^{k_2-1} 1 \cdots 0^{k_m-1} 1) = p \; p_{k_1} p_{k_2}
\cdots p_{k_m}. \label{renewal-product}
\end{equation}
We claim that
\begin{equation}
p\bigl(G(s) + G(s)^2 + G(s)^3 + \cdots\bigr) = p^2\bigl(s^2 + s^3
+s^4 +\cdots\bigr). \label{g-recurrence}
\end{equation}
To check this, observe that by \eqref{renewal-product}, the coefficient of
$s^n$ on the left side is the sum of $\P(1 u 1)$ over all binary strings $u$
of length $n-1$.  But this is simply $\P(1 *^{n-1} 1)$, which equals $0$ for
$n=1$ (by the coloring property) and $p^2$ for $n\geq 2$ (by $1$-dependence),
as required for the right side.

Finally, summing the geometric series in \eqref{g-recurrence} and solving
gives the claimed formula for $G(s)$.
\end{proof}

\cref{renewal} yields an alternative proof of the following result of Schramm
(see \cite{hsw} for Schramm's original proof).

\begin{cor}\label{quarter}
In any stationary $1$-dependent $q$-coloring, any given color has marginal
probability at most $1/4$.  In particular there is no stationary
$1$-dependent $3$-coloring.
\end{cor}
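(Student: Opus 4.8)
The plan is to read everything off \cref{renewal}. Fix a color; after relabeling the colors we may assume it is color $1$, and set $p:=\P(X_0=1)$. If $p=0$ there is nothing to prove, so suppose $p>0$. Then \cref{renewal} applies: $J_i:=\ind[X_i=1]$ is a renewal process whose renewal time $T$ has probability generating function
$$G(s)=\frac{p s^2}{1-s+p s^2},$$
and the only fact I will use is that the Taylor coefficients $p_n$ of $G$ at $0$ are genuine probabilities, i.e.\ $p_n\geq 0$ and $\sum_n p_n=1$.

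Next I would show that nonnegativity of the $p_n$ already forces $p\leq 1/4$. The function $G$ is rational with denominator $D(s)=1-s+p s^2$, and this is in lowest terms since $D(0)=1\neq 0$ while the numerator vanishes only at $s=0$; hence the singularities of $G$ are exactly the zeros of $D$. Now if $p>1/4$ the discriminant $1-4p$ is negative, so $D$ has no real zero at all; but by Pringsheim's theorem a power series with nonnegative coefficients and finite radius of convergence is singular at the (positive real) point equal to that radius, which for the rational function $G$ would have to be a zero of $D$ --- a contradiction. (Alternatively one can avoid Pringsheim: matching coefficients in $G(s)D(s)=p s^2$ gives the recursion $p_n=p_{n-1}-p\,p_{n-2}$ for $n\geq 3$, with characteristic equation $x^2-x+p=0$; when $p>1/4$ its roots form a non-real conjugate pair of modulus $\sqrt p$, so $p_n=C(\sqrt p)^n\cos(n\phi+\psi)$ with $\phi\in(0,\pi/3]$, and $C>0$ since $p_2=p>0$, whence $p_n<0$ for some $n$ --- again a contradiction.) Either way $p\leq 1/4$, which is the first assertion.

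For the second assertion, suppose $X$ is a stationary $1$-dependent $3$-coloring. Almost surely $X_0$ equals exactly one of $1,2,3$, so $\P(X_0=1)+\P(X_0=2)+\P(X_0=3)=1$; but each summand is at most $1/4$ by the first part, giving $1\leq 3/4$, a contradiction. (No separate treatment of vanishing marginals is needed, since the bound $\leq 1/4$ is trivially true when a marginal is $0$.)

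The one step that is not mere bookkeeping is the implication ``the coefficients of $G$ are nonnegative $\Rightarrow p\leq 1/4$'': this is precisely where the complex-conjugate pair of poles of $G$ (equivalently, the oscillation of the solution of the recursion) is seen to be incompatible with $G$ being a probability generating function, and it is the crux of the proof; the reduction via \cref{renewal}, the relabeling, and the sum over the three colors are all routine.
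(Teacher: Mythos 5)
Your proof is correct and takes essentially the same route as the paper: derive the renewal structure and generating function $G$ from \cref{renewal}, then apply Pringsheim's theorem to rule out $p>1/4$ (in which case $G$ has only non-real singularities), and finally sum the three marginals to rule out a $3$-coloring. Your parenthetical alternative via the recursion $p_n=p_{n-1}-p\,p_{n-2}$ is a nice self-contained way to avoid Pringsheim and still obtain the full $1/4$ bound (the paper's own Pringsheim-free remark only checks the coefficients of $s^7$ and $s^8$, which rules out the $3$-coloring but not the sharp constant), but it is a variant of the same idea rather than a genuinely different approach.
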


\begin{proof}
Suppose that $p>1/4$.  Then both singularities of $G$ (viewed as a function
on the complex plane) are complex.  This contradicts a theorem of Pringsheim
from 1893 (see \cite[Theorem~IV.6]{flajolet-sedgewick} or
\cite[\S~7.21]{titchmarsh}): a Taylor series with non-negative real
coefficients and finite radius of convergence $R$ has a singularity at $R$.
\end{proof}

We remark that the possibility of a stationary $1$-dependent $3$-coloring can
also be ruled out without appeal to Pringsheim's theorem as follows.  In the
Taylor series for $G$, the coefficient of $s^7$ is $p(1-p)(1-3p)$, which
forces $p\leq 1/3$. But if $p=1/3$ then the coefficient of $s^8$ is
$-1/81<0$.

\begin{proof}[Proof of \cref{properties}(ii)]
We prove that any stationary $1$-dependent $4$-color\-ing has property (ii),
as claimed at the end of \cref{properties}. By \cref{quarter}, each color
must have marginal probability exactly $p=1/4$, in which case the probability
generating function of the renewal time in \cref{renewal} factorizes to
become
$$G(s)=\Bigl(\frac{s}{2-s}\Bigr)^2.$$
But this is the probability generating function of the sum of two independent
$\mbox{Geometric}(1/2)$ random variables, which yields the claimed
description of the process $J$.
\end{proof}

One straightforward consequence of \cref{properties}(ii) is that for any
stationary $1$-dependent $4$-coloring $X$,
$$\P\bigl(X_1,\ldots,X_n \in\{2,3,4\}\bigr)=\frac{n+2}{2^{n+1}}.$$
For our $4$-coloring this also follows from \cref{build-tot} with $q=3$ (and
symmetry).

\cref{quarter} and its proof reflect the fact that $q=4$
colors is in a sense a critical case for the $1$-dependent
coloring problem.  This is one reason for our belief that
the solution is unique.  See \cref{sec-hardcore} for
extensions of some of these ideas to general graphs.

Finally in this section we derive the claimed description
of the one-color marginal for the $3$-coloring, for which
we need to return to proper buildings.
\begin{proof}[Proof of \cref{properties}(iv)]
It suffices to check that the two processes have equal
probabilities of assigning $1$'s to every integer in a
finite set $A\subset\Z$, since all other cylinder
probabilities can be computed from these by
inclusion-exclusion. Since both processes are $2$-dependent
and have no adjacent $1$'s, it is enough to do this for $A$
of the form $\{1,3,\ldots,2m-1\}$.

Let $P(x)=P_3(x)=2B(x)/(n+2)!$ denote the cylinder
probability of the $3$-coloring for the word $x\in[3]^n$.
We use $*$'s to denote unrestricted symbols in $[3]$ to be
summed over, so that $2$-dependence of the process says
that $P(x{*}{*}y)=P(x)P(y)$ for all words $x$ and $y$.
\cref{build-rec} gives that for every proper coloring
$x\in[3]^n$,
\begin{equation}\label{recur-3col}
(n+2) P(x) = \sum_{i=1}^n P(\widehat x_i).
\end{equation}
Write $p_m:=P(1{*}1{*}1\cdots{*}1)$, where the word has $m$
$1$'s and length $2m-1$, and $p_0:=1$.  Then,
\begin{align*}
(2m+1)p_m&=P({*}1{*}1{*}1\cdots)+P(1{*}{*}1{*}1\cdots)
+P(1{*}1{*}{*}1\cdots)+\cdots\\
&=p_0 p_{m-1} +p_1 p_{m-2} +\cdots + p_{m-1} p_0.
\end{align*}
(The first equality requires some care: the left side does
not change if we interpret each $*$ as being summed over
$\{2,3\}$ instead of $[3]$; then we can apply
\eqref{recur-3col}.  The words that arise from deleting a
$*$ vanish, since they are not proper colorings, and in the
others we may allow each $*$ to revert to its original
meaning, since it is still adjacent to a $1$.  For the
second equality we use $2$-dependence).

We now show that the cylinder probabilities of the second
process satisfy the same recurrence, whereupon induction
will finish the proof.  Indeed, let
$q_m:=\P(U_1<U_2>U_3<\cdots >U_{2m+1})$, where the
inequalities alternate, and $q_0:=1$.  This equals the
probability of the event $E$ that the elements of a
uniformly random permutation $\pi$ in $S_{2m+1}$ satisfy
the same inequalities.  We decompose $E$ according to the
location of the maximum of $\pi$. The conditional
probability of $E$ given $\pi_{2i}=2m+1$ is
\[\P(\cdots
<\pi_{2i-2}>\pi_{2i-1})\,\P(\pi_{2i+1}<\pi_{2i+2}>\cdots )=
 q_{i-1}q_{m-i}.\qedhere\]
\end{proof}

\section{Alternative formula}
%%%%%%%%%%%%%%%%%%%%%%%%%%%%%%%%%%%%%%%%%%%%%%%%%%%%%%%%%%%%%%%%%%%%%%%%%%%%%%%
\label{alternative}

In this section we derive a different formula for the cylinder probabilities
of the $1$-dependent $4$-coloring $X$ of $\Z$.  It was this formula that
originally convinced us that such a coloring must exist (contrary to much
circumstantial evidence), since it has all the required properties, except
that it appears extremely difficult to prove directly that it is nonnegative.
We were led to our solution by seeking recursions satisfied by this formula,
and finding the equivalent of Lemma~\ref{build-rec} (which we then
re-interpreted via buildings).  Below we state the formula, after some
necessary definitions. We then discuss applications and motivation before
giving the proof.  The basic idea is to start with a postulated law for the
$1$-dependent binary process $(\ind[X_i=1\text{ or }2])_{i\in\Z}$, and try to
build the law of $X$ around it.

We identify the $4$ colors with binary strings of length $2$.  It is
convenient to use the binary symbols $+(=+1)$ and $-(=-1)$, and to write the
strings as column vectors, so
$1,2,3,4=\binom\m\m,\binom\m\p,\binom\p\m,\binom\p\p$ (say; the choice of
bijection is immaterial). Then a word $x\in [4]^n$ becomes a $2\times n$
matrix, and we denote its rows $y,z\in\{\m,\p\}^n$:
$$x=(x_1,x_2,\ldots, x_n) = \binom yz=
\left(\begin{matrix} y_1&y_2&\cdots&y_n \\ z_1&z_2&\cdots&z_n\end{matrix}\right).
$$

Let $y\in\{\m,\p\}^n$, and let $\alpha(y)$ denote the number of permutations
$\pi\in S_{n+1}$ such that $\pi_i<\pi_{i+1}$ if $y_i=+$, and
$\pi_i>\pi_{i+1}$ if $y_i=-$, for each $1\leq i\leq n$ (in other words, the
number of permutations with descent set given by the locations of $\m$'s, or
the number of linear extensions of the $(n+1)$-element poset generated by
these inequalities). For example,
$$
\arraycolsep=1pt
  \begin{array}{lrlcccccccccccccccl}
   \text{if}\quad   &y&=&\p&&\m&&\p&&\p&& \\
   \text{then}\quad &\alpha(y)&=\bigl|\bigl\{\pi\in S_{5}:\;
   \pi_1 &<&\pi_2 &>& \pi_3 &<& \pi_4 &<& \pi_5 &\bigr\}\bigr| = 9.
  \end{array}
$$
(See e.g.\ \cite{niven} for information about $\alpha$). If $(U_i)_{i\in\Z}$
are i.i.d.\ Uniform on $[0,1]$ and we let $Y_i:=(-1)^{\ind[U_i>U_{i+1}]}$
then $\P((Y_1,\ldots,Y_n)=y)=\alpha(y)/(n+1)!$.  This will be the law of $Y$,
where $X=\binom YZ$.

A \df{Dyck word} of length $2k$ is an element of $\{-,+\}^{2k}$ comprising
$k$ $+$'s and $k$ $-$'s, such that the $i$th $+$ precedes the $i$th $-$ for
each $i$. A \df{dispersed Dyck word} of length $m$ is an element of
$\{-,0,+\}^m$ that is a concatenation of Dyck words and strings of $0$'s.
Examples of dispersed Dyck words are $\p\m0\p\p\m\m00$, $000$, and $\p\m\p\m$
(but not $\p 0\m$). Let $\DD(m)$ be the set of dispersed Dyck words of length
$m$, and for $w\in \DD(m)$, let $|w|$ be the number of $+$'s in
$w$.%
\footnote{We remark that $|\DD(m)|=\binom{m}{\lfloor m/2\rfloor}$, although
we will not use this. For a bijective proof, consider a lattice path from
$(0,\tfrac12)$ to $(m,\pm\tfrac12)$ via steps $(1,\pm1)$.  Map steps between
heights $-\tfrac12$ and $\tfrac12$ to $0$'s, and reflect excursions below
$-\tfrac12$ into excursions above $\tfrac12$.}

If $y\in\{-,+\}^n$ has $m$ intervals of constancy (or \df{runs}) and $w\in
\DD(m-1)$, define $y_w\in \{-,+\}^n$ to be the word obtained by changing the
signs of some whole runs of $y$, not including the first and last runs, in
such a way that the $j$th sign-change between runs is eliminated precisely
for those $j$ with $w_j\neq 0$.  For example, with $n=15$ and $m=9$,
$$
\arraycolsep=1pt
  \begin{array}{lrccccccccccccccccl}
   \text{if}\quad &w=\;&&\p&&\p&&\m&&\m&&0&&\p&&\m&&0&\\
   \text{and}\quad &y=\;&\p\p\p&&\m\m&&\p&&\m&&\p\p&&\m\m\m&&\p&&\m&&\p \\
    \text{then}\quad &y_w=\;&\p\p\p&&\p\p&&\p&&\p&&\p\p&&\m\m\m&&\m&&\m&&\p ,
  \end{array}
$$
(where the horizontal spacing emphasizes the runs of $y$). Note that $y_w$
depends on $w$ only through the locations of its Dyck words, not on which
words they are, so for instance $y_{\p\p\m\m0\p\m0}=y_{\p\m\p\m0\p\m0}$.

Now let $y,z\in\{-,+\}^n$, and let $m$ be the number of runs of $y$. For
$1\leq j\leq m-1$, let $\ell_j$ and $r_j$ be respectively the elements of $z$
immediately before and after the $j$th sign-change in $y$. For example, if
$$\binom yz =
\arraycolsep=1pt
  \left(\begin{array}{cccccccccccccl}
    \p&\p&\ \ &\m&\m&\m&\ \ &\p&\p&\ \ &\m&\ \ &\p&\p \\
    z_1&z_2&\ &z_3&z_4&z_5&\ &z_6&z_7&\ &z_8&\ &z_9&z_{10}
  \end{array}\right)
$$
then $\ell_1=z_2$, $r_1=z_3$, and $r_3=\ell_4=z_8$, etc. Let
$$c(w,y,z):=\prod_{j=1}^{m-1}\begin{cases}\ell_j,&
w_j=+;\\r_j,&w_j=-;\\1,&
w_j=0.\end{cases}$$

We are now ready to state the formula.  For $x=\binom yz\in[4]^n$, where $y$
has $m$ runs, define
\begin{multline}\label{formula}
Q(x)= Q\binom yz:= \\
\begin{cases} \displaystyle 2^{n-m}\!\!\!\!\!\sum_{w\in
\DD(m-1)}\!(-1)^{|w|}c(w,y,z)\, \alpha(y_w) & \text{ \parbox{1.15in}{if $x$
is a
\\ proper coloring;}}\\0 &\text{ otherwise}.
\end{cases}\end{multline}

\begin{thm}\label{alt}
For $x\in [4]^n$ we have $B(x)=Q(x)$.
\end{thm}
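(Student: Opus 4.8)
The plan is to show that the function $Q$ satisfies the same recursion as $B$, namely the one in \cref{build-rec}, together with the same boundary values, and then to conclude by induction on the word length $n$. Since both $Q$ and $B$ vanish on words that are not proper colorings, and the cases $n\le1$ are immediate from the definitions (with the convention $Q(\emptyset):=1$), everything reduces to proving
\begin{equation*}
Q(x)=\sum_{i=1}^n Q(\widehat x_i)
\end{equation*}
for every proper coloring $x\in[4]^n$ with $n\ge1$, where $Q(\widehat x_i)=0$ whenever $\widehat x_i$ is not a proper coloring. Granting this, \cref{build-rec} and induction give $B(x)=\sum_i B(\widehat x_i)=\sum_i Q(\widehat x_i)=Q(x)$.

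To verify the displayed recursion, write $x=\binom yz$ with $y$ having runs (maximal constant blocks) $R_1,\dots,R_m$, and recall that properness of $x$ forces $z$ to alternate within each run of $y$. I would classify the deletions $\widehat x_i$ by the position $i$. If $i$ lies strictly inside a run of $y$, then its neighbours $i\pm1$ agree in $y$, hence in $z$ (by the alternation), hence in $x$, so $\widehat x_i$ has a repeated symbol and $Q(\widehat x_i)=0$. The only positions that can contribute therefore lie at a run boundary; up to the special behaviour at $i=1$ and $i=n$, these split into three types: deleting the first or last entry of a run of length $\ge2$ (the number of runs stays $m$); deleting a singleton run $R_k$ with $1<k<m$ (the runs $R_{k-1},R_{k+1}$ merge, so the number of runs drops to $m-2$); and deleting a singleton run at an end (the number of runs drops to $m-1$). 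In each surviving case one must rewrite $Q(\widehat x_i)$ in terms of the data of $x$: this involves reconciling the sums over dispersed Dyck words before and after the deletion (when two runs merge, the letters of $w$ at the two vanished sign-changes must be accounted for, which is where the signs $(-1)^{|w|}$ enter), relating $\alpha\bigl((\widehat y_i)_w\bigr)$ to $\alpha$ of the words obtained from $y_w$ by shortening or merging runs, and tracking how the prefactor $2^{n-m}$ and the weight $c(w,y,z)$ change as the $z$-letters $\ell_j,r_j$ bordering the affected sign-changes are absorbed or cancel.

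After writing each surviving term this way, I would sum over $i$ and regroup the result as a sum over $w\in\DD(m-1)$. For fixed $w$, the terms should collapse to $2^{n-m}(-1)^{|w|}c(w,y,z)$ times a pure statement about $\alpha$, roughly: $\alpha(y')$ equals the signed combination of the $\alpha$-values of the words obtained from $y'$ by deleting a boundary letter of each run of length $\ge2$ and by merging each singleton run into a neighbour. Such an identity I would derive from the elementary recursion for $\alpha$ obtained by splitting a permutation $\pi\in S_{N+1}$ according to the position of its largest entry $N+1$, which must occupy a peak of the pattern (or an end of the appropriate type), each case expressing $\alpha$ of the length-$N$ pattern through $\alpha$-values of length-$(N-1)$ patterns. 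The alternating signs, the powers of $2$, and the weights $c(w,y,z)$ in the definition of $Q$ are tailored precisely so that this reduction reproduces the building recursion term by term.

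The principal obstacle is the bookkeeping in the second paragraph --- simultaneously tracking runs, run-flips, dispersed Dyck words and the $z$-weights $\ell_j,r_j$, isolating the correct $\alpha$-identity, and correctly handling the end-of-word cases together with the parity and power-of-two accounting. Once these are in place the remaining verification, though lengthy, is routine.
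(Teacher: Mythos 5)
Your strategy is the same as the paper's: show that $Q$ satisfies the deletion recurrence $Q(x)=\sum_i Q(\hat x_i)$ of \cref{build-rec}, classify the deletions by run position (interior contributes zero; remaining cases are endpoints of runs of length $\geq 2$, internal singleton runs, end singleton runs), and reduce the sum to a recursion for $\alpha$ proved by splitting permutations at the position of the maximum — which is precisely the paper's \cref{beta-rec}. So the plan and the key lemma are correctly identified.

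However, the part you describe as ``routine bookkeeping'' in fact carries the load-bearing ideas, and one of them you describe imprecisely. The relevant $\alpha$-identity is \emph{not} a signed combination: \cref{beta-rec} is a plain, unsigned sum over shortening each run by one. The signs in $Q$ do not propagate into the $\alpha$-identity; they cancel beforehand. Concretely, after expressing each $Q(\hat x_i)$ for fixed $w\in\DD(m-1)$, the contribution from the $j$th boundary comes with a factor $I(w_{j-1},w_j)\in\{+1,-1,0\}$ determined by the adjacent entries of $w$, and the crucial observation is that the sum of $I(w_{j-1},w_j)$ over all boundaries $j$ corresponding to a single run of $y_w$ equals exactly $1$ — because every Dyck word contains exactly one more ``$\p\m$'' than ``$\m\p$''. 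That cancellation is what converts the signed mess into an unsigned sum to which \cref{beta-rec} applies. Likewise, the singleton-run case requires analyzing which of the insertions $00$, $\p\m$, $\m\p$ produce valid dispersed Dyck words (exactly one of $00,\m\p$ always does), absorbing the constraint $\ell_{j-1}\neq r_j$ via the factor $(1-\ell_{j-1}r_j)/2$, and checking that the power of $2$ works out when $m$ drops by $2$. None of this is visible from the outline; without it the proposal is a correct roadmap rather than a proof.
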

In consequence, the cylinder probabilities $P(x)$ for the $4$-coloring $X$ of
Theorem~\ref{main} can of course be expressed as $P(x)=Q(x)/[2^n(n+1)!]$.
Theorem~\ref{alt} will be proved by showing that $Q(x)$ satisfies the same
recurrence as $B(x)$ (\cref{build-rec}).  It is now easy to deduce the
claimed marginal distribution for the first binary digit.
\begin{proof}[Proof of \cref{properties}(iii)]
We claim that
\begin{equation}\label{marginal}
\sum_{z\in \{\m,\p\}^n} Q\binom yz = 2^{n}\alpha(y), \qquad y\in \{\m,\p\}^n;
\end{equation}
then the result is immediate from Theorem~\ref{alt}.

To prove \eqref{marginal}, sum \eqref{formula} over $z$ and interchange the
order of summation.  The contribution from the trivial word $w=00\cdots0$ is
$$\sum_{z:x\text{ is proper}}2^{n-m}\alpha(y)=2^n\alpha(y),$$
since $z$ must alternate within each run of $y$, and thus there are $2^m$
choices. The contribution from every other $w$ vanishes. To see this, fix a
nontrivial $w$, and consider the location of the first $+$ in $w$. For any
$z$, let $z'$ be obtained from $z$ by changing the sign of every symbol in
the run of $y$ that precedes that $+$.  Then $c(w,y,z')=-c(w,y,z)$, so the
terms corresponding to $z$ and $z'$ cancel.
\end{proof}

Theorem~\ref{alt} implies a host of combinatorial identities; we briefly
highlight some examples.  Re-interpreting the result proved above in terms of
buildings gives the following.  For $y\in \{\m,\p\}^n$, define
$S(y)\subset[4]^n$ to be the Cartesian product
$$S(y):=\bigtimes_{i=1}^n\begin{cases}
\{1,2\},& y_i=-;\\
\{3,4\},& y_i=+.
\end{cases}$$
Then we have
$$\sum_{x\in S(y)} B(x) = 2^n \alpha(y),\qquad y\in\{\m,\p\}^n.$$
When $y=\p\p\cdots\p$ this is \cref{build-tot} with $q=2$,
but it seems much less clear why the general case holds.
Can it be given a bijective proof?  Taking $y$ alternating
of even length and combining with \cref{properties}(iv)
yields the curious identity
$$\sum_{x\in(\{1,2\}\times\{3,4\})^n} B(x) =
\frac{4^n}{n+1} \sum_{x\in(\{1,2\}\times\{3\})^n} B(x), \qquad n\geq 1.$$

The $S_4$-symmetry of $B(x)$ implies in particular that
$$Q\binom yz=Q\binom zy, \qquad y,z\in\{\m,\p\}^n.$$
Again, it does not seem at all clear how to prove this directly from the
definition \eqref{formula}.  For instance, in the very simplest case where
$z$ is a constant word and $y$ is alternating, it reduces to
$$
\sum_{\substack{m\geq 1,\; t_1,\ldots,t_m\geq 0:\\\sum_j(2t_j+1)=n}}
\Bigl[\prod_{j=1}^{m} (-C_{t_j})\Bigr] \, \alpha\bigl(2t_1+1,\ldots,2t_m+1\bigr) = 2^{n-1},
\quad n\geq 1,
$$
where $\alpha(k_1,\ldots,k_m)$ denotes $\alpha(y)$ for a word $y$ constructed
so as to have successive run lengths $k_1,\ldots,k_m$, and
$C_t:=\binom{2t}{t}/(t+1)$ are the Catalan numbers.  We have found a direct
proof of this last identity, but even this involves a fairly intricate
inclusion-exclusion argument for posets.

Another application of the formula \eqref{formula} is that it gives rise to a
computationally efficient method for computing the cylinder probabilities of
the $4$-coloring.  Indeed, there is a recurrence based on \eqref{formula}
that allows $Q(x)(=B(x))$ to be computed in $O(n^3)$ operations for a word
$x$ of length $n$, whereas a na\"ive application of \eqref{formula} requires
exponential time, as does computing $B(x)$ via \cref{build-rec}.  We state
this recurrence at the end of this section.

Before giving the proof of Theorem~\ref{alt} we briefly discuss how we
arrived at the formula \eqref{formula} (before knowing whether any
$k$-dependent $q$-coloring existed).  Suppose $X$ is a $1$-dependent
$4$-coloring, and decompose it into two binary sequences $X=\binom YZ$.  Then
$Y$ is a stationary $1$-dependent binary process.  The law of such a process
is determined by the sequence $v_n=P(Y_1=\cdots =Y_n=+)$, since all other
cylinder probabilities can be computed from $v$ by inclusion-exclusion.  Of
course, the sequence $v$ must satisfy certain inequalities in order that
these cylinder probabilities be nonnegative.  Many choices for $v$ are
possible. Examples are those for which $1,1,v_1,v_2,v_3,\dots$ is a P\'olya
frequency sequence -- see \cite[Chapter~8]{Karlin}.

\sloppypar Suppose for the purposes of the current discussion that $Y$ is any
stationary $1$-dependent binary process, and let $\alpha'$ be defined by
$\P[(Y_1,\ldots ,Y_n)=y]=\alpha'(y)/(n+1)!$.  By considering the constraints
imposed on the cylinder probabilities of $X$ by $1$-dependence, one is led
(after a certain amount of computation and some inspired guesses) to the
hypothesis that $\P[(X_1, \ldots, X_n)=x]=Q'(x)/[(n+1)!2^n]$, where $Q'$ is
given in terms of $\alpha'$ by the formula \eqref{formula}.  It is not
difficult to check that a $Q'$ defined in this way satisfies the equalities
required for consistency and $1$-dependence of $X$, for {\em any} $\alpha'$
arising from a stationary $1$-dependent $Y$.

The only issue is nonnegativity of $Q'(x)$.  This does not hold for general
$\alpha'$: for instance if $Y$ is i.i.d\ with $\P(Y_0=\p)=1/2$ then one can
check that $Q'(x)<0$ for $y=\p\m\p\m$ and $z=\p\p\p\p$.  In fact it appears
likely that $\alpha'=\alpha$ is the only choice that works.  However, it
seems extremely difficult to prove nonnegativity of $Q$ directly from
\eqref{formula} in that case.  The only way we know is to prove that $Q$
satisfies the same recurrence as $B$.

We now turn to the proof of \cref{alt}.  A key ingredient is that $\alpha$
satisfies a recurrence similar to the one that we wish to check for $Q$.  As
before, let $\alpha(k_1,\ldots,k_m)$ denote $\alpha(y)$ where $y$ is a binary
word with $m$ runs of successive lengths $k_1,\ldots,k_m$.  If one $k_i$ is
$0$ the interpretation is that the two neighboring intervals coalesce, so
that for example $\alpha(k_1,k_2,0,k_4,k_5)=\alpha(k_1,k_2+k_4,k_5)$ and
$\alpha(0,k_2,k_3,\ldots)=\alpha(k_2,k_3,\ldots)$.
\begin{prop}\label{beta-rec}
For positive integers $k_1,\dots,k_m$,
\begin{multline*} \alpha(k_1,k_2,\dots, k_m)\\
= \alpha(k_1-1,k_2,\dots,k_m)+\alpha(k_1,k_2-1,\dots,k_m)+
\cdots+\alpha(k_1,\dots,k_m-1).
\end{multline*}
\end{prop}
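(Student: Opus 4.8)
The plan is to pass to the geometric description of $\alpha$ recorded just before the statement: regarding $u_1,\dots,u_{n+1}$ as real coordinates, let $\Delta_y\subseteq[0,1]^{n+1}$ be the polytope defined by $u_i<u_{i+1}$ whenever $y_i=+$ and $u_i>u_{i+1}$ whenever $y_i=-$, so that $\alpha(y)=(n+1)!\operatorname{Vol}(\Delta_y)$, and likewise for shorter words. For every symbol $i$ in the $j$th run of $y$, the word $\widehat{y}_i$ has run lengths $(k_1,\dots,k_j-1,\dots,k_m)$ (under the coalescence convention when $k_j=1$); so, after dividing by $n!$, the asserted identity is equivalent to
\[
(n+1)\operatorname{Vol}(\Delta_y)=\sum_{j=1}^{m}\operatorname{Vol}\bigl(\Delta_{\widehat{y}_{i(j)}}\bigr),
\]
where $i(j)$ is any symbol of the $j$th run. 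I would produce the right-hand side by computing the left-hand side as a boundary integral with a well-chosen vector field.

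Since $\operatorname{div}(u-p)=n+1$ for every constant vector $p\in\R^{n+1}$, the divergence theorem gives $(n+1)\operatorname{Vol}(\Delta_y)=\int_{\partial\Delta_y}(u-p)\cdot\nu$. The facets of $\Delta_y$ are precisely the ``collapse'' facets $\{u_i=u_{i+1}\}$ for $i=1,\dots,n$, together with one facet $\{u_\ell=1\}$ for each position $\ell$ at which $u_\ell$ exceeds each of its neighbours (a local maximum of the pattern, counting the two ends when appropriate) and one facet $\{u_\ell=0\}$ for each local minimum position. The crux is to take $p$ to be the $\{0,1\}$-vector that is $1$ at every local-maximum position: then the integrand $u_\ell-p_\ell$ is $1-1$ on each facet $\{u_\ell=1\}$ and $0-0$ on each facet $\{u_\ell=0\}$, so only the collapse facets contribute. (With $p=0$ one would instead recover the familiar recursion gotten by conditioning on the position of the largest value, which carries binomial coefficients.)

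Collapsing the equal pair on $\{u_i=u_{i+1}\}$ identifies this facet, up to a factor $\sqrt2$ in surface area, with the polytope $\Delta_{\widehat{y}_i}$, and its outward unit normal is $\pm\tfrac1{\sqrt2}(e_i-e_{i+1})$ with sign governed by $y_i$; so the facet contributes $\operatorname{sign}(y_i)\,(p_{i+1}-p_i)\operatorname{Vol}(\Delta_{\widehat{y}_i})$. Grouping the sum over $i$ by runs, all symbols of the $j$th run share the common sign $\epsilon_j$ of that run and the same $\widehat{y}_i$, so the $j$th block equals $\epsilon_j\operatorname{Vol}(\Delta_{\widehat{y}_{i(j)}})\sum_{i\in\text{run }j}(p_{i+1}-p_i)$, and the inner sum telescopes to $p_{t_j}-p_{t_{j-1}}$, where $t_{j-1}<t_j$ are the extreme positions of the $j$th run viewed as a monotone chain of coordinates. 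Exactly one of $t_{j-1},t_j$ is a local maximum --- the top end of the run --- so $p_{t_j}-p_{t_{j-1}}=\epsilon_j$ (with the right sign: an increasing run has its maximum at the right end, where $\epsilon_j=+1$, and symmetrically for a decreasing run). Hence the $j$th block is $\epsilon_j^2\operatorname{Vol}(\Delta_{\widehat{y}_{i(j)}})=\operatorname{Vol}(\Delta_{\widehat{y}_{i(j)}})$, and summing over $j$ and multiplying by $n!$ yields the Proposition.

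The one step that will demand care is the evaluation of the collapse-facet integral: matching $\{u_i=u_{i+1}\}$ with $\Delta_{\widehat{y}_i}$ correctly, carrying the $\sqrt2$ (which cancels the $1/\sqrt2$ in the unit normal), and fixing the dependence of the normal's direction on $y_i$, so that the per-run blocks recombine with the correct signs. No separate argument is needed for the degenerate cases --- a single run, or a run of length $1$ invoking the convention $\alpha(\dots,0,\dots)=\alpha(\dots)$ --- because collapsing a coordinate between two opposite signs still produces a genuine shorter zigzag polytope and the telescoping is unaffected.
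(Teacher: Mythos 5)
Your argument is correct, and it takes a genuinely different route from the paper's. The paper proves the recursion purely combinatorially: it partitions the set $E$ of admissible permutations by the position $i$ of the maximum value $n+1$, notes that this position must be a local maximum of the inequality pattern, and for each internal local maximum further splits by whether $\pi_{i-1}<\pi_{i+1}$ or $\pi_{i-1}>\pi_{i+1}$; each resulting piece is counted by exactly one term $\alpha(k_1,\dots,k_j-1,\dots,k_m)$ (so each internal peak supplies two terms, one for each adjacent run, while each end supplies one). You instead pass to the order polytope $\Delta_y$ with $\alpha(y)=(n+1)!\,\operatorname{Vol}(\Delta_y)$ and apply the divergence theorem to the field $u-p$, choosing $p$ to be the indicator of local-maximum coordinates; this kills the contributions from the facets $\{u_\ell=0\}$ and $\{u_\ell=1\}$, so only the collapse facets $\{u_i=u_{i+1}\}$ survive, and the per-run telescoping of $p_{i+1}-p_i$ gives each term with unit coefficient. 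I have checked the facet list, the $\sqrt{2}$ area factor against the $1/\sqrt{2}$ in the unit normal, the sign dependence on $y_i$, the telescoping and the identity $p_{t_j}-p_{t_{j-1}}=\epsilon_j$ (including the end-run and $k_j=1$ degeneracies), and everything is consistent. The geometric approach is slicker in that a single flux computation does everything at once and makes transparent why the coefficients come out to be exactly $1$; the paper's approach is more elementary (no analysis, only counting) and is stated entirely inside $S_{n+1}$. One small caveat: your parenthetical remark that taking $p=0$ would ``recover the familiar recursion \dots\ which carries binomial coefficients'' is off --- the $p=0$ flux recovers precisely the paper's decomposition by position of the maximum, which has no binomial coefficients; the binomials would appear only if one also conditioned on the rank structure of the neighboring entries.
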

This is a special case of the main result of \cite{EHS},
when applied to the poset that defines $\alpha$. We also
give a simple direct proof.

\begin{proof}[Proof of \cref{beta-rec}]\sloppypar
Suppose $\alpha(k_1,\dots, k_m)=\alpha(y)$ where $y\in\{\m,\p\}^n$ is of
length $n=\sum_j k_j$.  Let $E$ be the set of permutations $\pi\in S_{n+1}$
that satisfy the inequalities in the definition of $\alpha(y)$, so
$\alpha(y)=|E|$. For $1\leq i\leq n+1$, let $E_i$ be the set of permutations
$\pi\in E$ that have their maximum at $i$, i.e.\ $\pi_i=n+1$.  For $1<i<n+1$
we further distinguish according to the order of the neighboring elements:
let $E_i^+$ be the set of $\pi\in E_i$ such that $\pi_{i-1}<\pi_{i+1}$, and
define $E_i^-$ similarly with the inequality reversed. Clearly,
$$E=E_1 \cup E_{n+1} \cup \bigcup_{1<i<n+1} (E_i^+\cup E_i^-),$$
and the union is disjoint.  However, $E_i$ is empty unless $\pi_i$ is already
a local maximum in the sequence of inequalities defining $E$ (i.e.\
$(y_{i-1},y_{i})=(\p,\m)$, where restrictions on ``$y_0$" and ``$y_{n+1}$"
are ignored).  In that case, we have
\[\arraycolsep=1pt\def\arraystretch{1.2}\begin{array}{rlccl}
|E^+_i|=&\alpha(k_1,k_2,\ldots, &k_{j-1},&k_{j}{-}1,&\ldots,k_m);\\
|E^-_i|=&\alpha(k_1,k_2,\ldots, &k_{j-1}{-}1,&k_j,&\ldots,k_m),
\end{array}\]
 when $1<i<n+1$ and
$(y_{i-1},y_i)=(\p,\m)$ is the boundary between the $(j-1)$st and $j$th runs
of $y$, and similar statements hold for $E_1$ and $E_{n+1}$.  (Indeed, the
maximum element $n+1$ in the permutation can be ignored, and the remaining
elements $1,\ldots,n$ satisfy precisely the inequalities required for the
appropriate ``reduced'' $\alpha$).
\end{proof}

\begin{proof}[Proof of \cref{alt}]
Recall that $\hat x_i$ denotes the word $x$ with the $i$th symbol deleted. We
claim that if $x\in[4]^n$ is a proper coloring,
\begin{equation}\label{qrecur2}Q(x)=\sum_{i=1}^nQ(\hat x_i).\end{equation}
Once this is proved, the result is immediate, since \cref{build-rec} states
that $B$ satisfies the same recurrence, and $Q(\emptyset)=B(\emptyset)=1$ for
the empty word $\emptyset$.

Let $x=\binom{y}{z}$ and let $y$ have $m$ runs. Since $z$ alternates within
each run of $y$, we have $Q(\hat x_i)=0$ whenever $i$ is an interior point of
a run, because $\hat x_i$ is not a proper coloring.  So, we need to compute
$Q(\hat x_i)$ when $i$ is an endpoint of a run of $y$.

Suppose first that $i$ is an endpoint of a run of length at least 2, and
suppose initially that it is not the first or last run. If, for example, $i$
is an endpoint of the $j$th run of $y$, and that run is $\m\m\m\m$, the
relevant part of $x$ is
$$x=\binom yz =
\arraycolsep=1pt
  \left(\begin{array}{rcccl}
    \cdots\ \p\ \p\ \p\   &\quad&    \m\quad\m\quad\m\quad\m     &\quad& \p\ \p\ \p\ \cdots \\
    \hfill\ell_{j-1}       &&     r_{j-1}\hfill\ell_j    &&   r_j
  \end{array}\right),
$$
and if $i$ is the left endpoint of that run, the corresponding $\hat x_i$ is
$$\hat x_i=\binom {\hat y_i}{\hat z_i} =
\arraycolsep=1pt
  \left(\begin{array}{rcccl}
    \cdots\ \p\ \p\ \p\  &\quad&    \ \m\quad\m\quad\m     &\quad& \p\ \p\ \p\ \cdots \\
    \hfill\ell_{j-1}       &&     {-}r_{j-1}\hfill\ell_j    &&   r_j
  \end{array}\right),
$$
while if $i$ is the right endpoint of that run,
$$\hat x_i=\binom {\hat y_i}{\hat z_i} =
\arraycolsep=1pt
  \left(\begin{array}{rcccl}
    \cdots\ \p\ \p\ \p\  &\quad&    \m\quad\m\quad\m \    &\quad& \p\ \p\ \p\ \cdots \\
    \hfill\ell_{j-1}       &&     r_{j-1}\hfill{-}\ell_j    &&   r_j
  \end{array}\right).
$$

In passing from $x$ to $\widehat{x}_i$, the value of $m$ is unchanged, while
the value of $n$ is decreased by $1$. In the first case above, the sign of
$r_{j-1}$ is changed, while in the second case, the sign of $\ell_{j}$ is
changed, and therefore
$$c(w,\hat y_i,\hat z_i)=c(w,y,z)\, (-1)^{\ind[w_{j-1}=-]}$$
in the first case, and
$$c(w,\hat y_i,\hat z_i)=c(w,y,z)\, (-1)^{\ind[w_{j}=+]}$$
in the second. If we set $w_0=w_m=0$ then these also hold when the run is the
first or the last. In both cases, $\hat y_i$ is obtained from $y$ by
shortening the corresponding run by $1$, and $(\hat
y_i)_w=\widehat{(y_w)}_i$. Denote their common value by $\hat y_{w,i}$. So,
the contribution to the right side of \eqref{qrecur2} from (both endpoints
of) this interval is
\begin{equation}\label{contribution1}
2^{n-1-m}\!\!
\sum_{w\in \DD(m-1)}(-1)^{|w|}c(w,y,z)\alpha(\hat y_{w,i})
 \Bigl[(-1)^{\ind[w_{j-1}=-]}+ (-1)^{\ind[w_{j}=+]}\Bigr]
\end{equation}

The last factor $(-1)^{\ind[w_{j-1}=-]}+ (-1)^{\ind[w_{j}=+]}$ can be written
as $2 I(w_{j-1},w_{j})$ where
$$
I(u,v):=\begin{cases}
  +1, &(u,v)=00 \text{ or }{+}{-}; \\
  -1, &(u,v)={-}{+};\\
  0, &\text{otherwise.}
\end{cases}
$$
This follows simply by considering all possibilities for $(w_{j-1},w_j)$,
noting that ${0}{-}$ and ${+}{0}$ are impossible in a dispersed Dyck word.
Therefore \eqref{contribution1} equals
\begin{equation}
\label{contribution2}
 2^{n-m}\sum_{w\in \DD(m-1)}(-1)^{|w|}c(w,y,z)\alpha(\hat y_{w,i})
   I(w_{j-1},w_{j}),
\end{equation}

Now suppose $i$ is the sole element of a run of length $1$. Again $n$ is
decreased by $1$ in passing from $x$ to $\widehat{x}_i$, but now $m$
decreases by $2$ if $1<i<n$, or by $1$ if $i\in\{1,n\}$.  If $i=1$, each
$w'\in \DD(m-2)$ in the sum defining $Q(\widehat x_1)$ can be made into a
$w\in \DD(m-1)$ by adding a $0$ at the beginning, and this gives
$$Q(\hat x_1)=2^{n-m}\sum_{\substack{w\in \DD(m-1):\\w_1=0}}
(-1)^{|w|}c(w,y,z)\alpha(\hat y_{w,1}).$$
 Similarly, for $i=n$, we add a $0$ at the end:
$$Q(\hat x_n)=2^{n-m}\sum_{\substack{w\in \DD(m-1):\\w_{m-1}=0}}
(-1)^{|w|}c(w,y,z)\alpha(\hat y_{w,n}).$$

If $1<i<n$, then (for example)
$$x=\binom yz
=\left(\begin{matrix}
\cdots\ \p\ \p\ \p\ &\m&\p\ \p\ \p\ \cdots\\
 \hfill \ell_{j-1} &\ r_{j-1}=\ell_{j}\ & r_{j}\hfill
\end{matrix}\right),$$
and
$$\hat x_i=\left(\begin{matrix}\hat y_i\\\hat z_i\end{matrix}\right)
=\left(\begin{matrix}
\cdots\ \p\ \p\ \p\ &&\p\ \p\ \p\ \cdots\\
 \hfill \ell_{j-1} && r_{j}\hfill\\
\end{matrix}\right).$$ This is a
proper coloring if and only if $\ell_{j-1}\neq r_{j}$. We will introduce a
factor $(1-\ell_{j-1}r_j)/2$ to account for this constraint. Let $w'\in
\DD(m-3)$ be a word in the sum corresponding to $Q(\widehat x_i)$.  We can
try to make $w'$ into a word in $\DD(m-1)$ by inserting $00$, $\p\m$ or
$\m\p$ before the $(j{-}1)$st symbol of $w'$; denote the resulting words
$w_{00},w_{\p\m},w_{\m\p}$.  Inserting $+-$ introduces an additional factor
$\ell_{j-1} r_{j}$ to $c$, and changes $|w'|$ by $1$.  Exactly one of
$w_{00},w_{\m\p}$ is a dispersed Dyck word (inserting $\m\p$ succeeds
precisely when there is a Dyck word that cannot be broken apart at the
insertion point -- note that e.g.\ $\p\m\p\m$ {\em can} be broken in the
middle, so here we would insert $00$).  Inserting $00$ leaves $c$ and $|w'|$
unchanged, while $\m\p$ multiplies $c$ by $r_{j-1}\ell_j=1$ and changes
$|w'|$ by 1; we introduce an extra sign change in this last case so that we
can get the factor $(1-\ell_{j-1}r_j)/2$.  The conclusion is
\begin{multline*}
(-1)^{|w'|}c(w',\hat y_i,\hat z_i)\,\frac{1-\ell_{j-1} r_{j}}{2}\\
=\tfrac12\!\!\!\!\sum_{\substack{w\in \DD(m-1)\cap \\
\{w_{00},w_{\p\m},w_{\m\p}\}}}\!\! (-1)^{|w|}c(w,y,z)
\,(-1)^{\ind[w=w_{\m\p}]}.
\end{multline*}
Therefore,
\begin{equation}\label{singleton}
Q(\hat x_i)=2^{n-m}\sum_{w\in \DD(m-1)}(-1)^{|w|}c(w,y,z)\alpha(\hat y_{w,i})
I(w_{j-1},w_{j}),
\end{equation}
where $I(w_{j-1},w_{j})$ is precisely the same quantity as defined for the
earlier case, and where the factor $1/2$ has canceled the extra $2$ in
$2^{(n-1)-(m-2)}$. Finally, note that if we again set $w_0=w_m=0$ then
\eqref{singleton} is valid in the cases $i=1,n$ also.

For each $1\leq j\leq m$, write $\widetilde y_{w,j}=\widehat y_{w,i},$ where
$i=i(j)$ is in the $j$th run of $y$.  This is the same for all runs $j$ that
coalesce into a single run when we form $y_w$. Summing over all runs of $y$,
we see that the right side of (\ref{qrecur2}) can be written as
$$2^{n-m}\sum_{w\in \DD(m-1)}(-1)^{|w|}c(w,y,z)\sum_{j=1}^{m}\alpha(\widetilde y_{w,j})
\,I(w_{j-1},w_{j}).$$
 Each Dyck word in $w$ corresponds to a run of $y_w$, as
does each $00$ (where again we take $w_0=w_m=0$). Every Dyck word contains
exactly one more $+-$ than $-+$. Therefore, the sum of $I(w_{j-1},w_j)$ over
those $j$ that correspond to a given run of $y_w$ is 1. By \cref{beta-rec},
the right side of \eqref{qrecur2} agrees with $Q(x)$.
\end{proof}

Finally, we state the promised alternative recurrence for $Q$ that allows for
efficient computation. We have for all proper colorings $x\in[4]^n$,
$$Q(x)=\sum_{r=1}^{n+1} Q_r^0(x),$$
where the quantity $Q_r^k(x)=Q_r^k\binom yz$ is defined for integers $k\geq
0$ and $1\leq r\leq n+1$ by
$$Q_r^k(x) =\ind\bigl[k=0 \text{ and } y_1=(-1)^{r+1}\bigr], \qquad n=1,$$
and for $n\geq 2$,
$$Q_r^k(x)=
\sum_{s\in S}
\begin{cases}
2 Q^k_s(\hat x_1),&y_1=y_2;\\
Q^k_s(\hat x_1) -z_1 Q_s^{k+1}(\hat x_1), & y_1\neq y_2 \text{ and }k=0;\\
z_2 Q_s^{k-1}(\hat x_1) -z_1 Q_s^{k+1}(\hat x_1), & y_1\neq y_2 \text{ and }k>0,
\end{cases}
$$
where
$$S:=\begin{cases}
\{r,\ldots ,n\},& y_1=(-1)^k ; \\
\{1,\ldots ,r-1\},& y_1=(-1)^{k+1}.
\end{cases}
$$
We omit the proof of this, which is a straightforward check given the
following explanation.  The quantity $Q_r^k(x)$ represents an extended
version of $Q(x)$ in which we sum over ``partial dispersed Dyck words" $w$
that can be made into a dispersed Dyck word by appending exactly $k$ $\p$'s
at the beginning, and where in addition each $\alpha(y_w)$ is modified by
restricting to permutations $\pi\in S_{n+1}$ satisfying $\pi_1=r$.

\section{Higher dimensions and shifts of finite type}
%%%%%%%%%%%%%%%%%%%%%%%%%%%%%%%%%%%%%%%%%%%%%%%%%%%%%%%%%%%%%%%%%%%%%%%%%%%%
\label{extras}

In this section we prove \cref{zd,sft}.  Let $\|\cdot\|=\|\cdot\|_1$ be the
$1$-norm on $\Z^d$.  The distance between two sets $A,B\subseteq\Z^d$ is
$\inf\{\|u-v\|:u\in A,\,v\in B\}$.   We first observe that the definition of
$k$-dependence for graphs given in the introduction is consistent with the
earlier definition for $\Z$. Indeed, suppose $X$ is $k$-dependent according
to the earlier definition.  Then if $(I_j)_{j\in J}$ is any collection of
intervals of $\Z$ no two of which are within distance $k$ then the
restrictions $(X|_{I_j})_{j\in J}$ form an independent family; this follows
by inductively adding one interval at a time.  Now if $A,B\subseteq\Z$ are at
distance greater than $k$ then $X|_A$ and $X|_B$ are independent, since $A$
and $B$ can each be partitioned into subsets that are contained in such a
collection of intervals.

We need the following extension of \cref{main}.  Write $u\simm v$ if
$0<\|u-v\|\leq m$.  A process $(X_v)_{v\in \Z^d}$ is a \df{range-$m$}
\df{$q$-coloring} if each $X_v$ takes values in $[q]$, and almost surely
$X_u\neq X_v$ whenever $u\simm v$.  (A range-$1$ coloring is simply a
coloring).

\begin{cor}\label{range-m}
  Let $d\geq 1$ and $m\geq 1$.  There exists a stationary $m$-dependent range-$m$
  $q$-coloring of $\Z^d$, where $q\leq \exp(c m^d)$ for an
  absolute constant $c$.
\end{cor}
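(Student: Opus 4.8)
The plan is to peel off two elementary reductions so as to land on \cref{main} together with the coloring-transfer machinery of \cite{hsw}. The first reduction replaces the $\ell_1$ constraint by the $\ell_\infty$ constraint: since $\|u-v\|_1\le m$ forces $\|u-v\|_\infty\le m$, any coloring that separates every pair with $0<\|u-v\|_\infty\le m$ is a fortiori a range-$m$ coloring in the required sense, so it is enough to properly color the graph $H_m^d$ on $\Z^d$ in which $u\sim v$ iff $0<\|u-v\|_\infty\le m$. The gain is that $H_m^d$ is a strong graph product: writing $\Gamma_m$ for the $m$-th power of the bi-infinite path (that is, $\Z$ with vertices joined whenever they differ by at most $m$), one checks directly that $H_m^d=\Gamma_m^{\boxtimes d}$, since $(x_i)_i$ and $(y_i)_i$ are adjacent in the $d$-fold strong product exactly when $|x_i-y_i|\le m$ for every $i$ and $x\ne y$. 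So the task reduces to building a stationary $m$-dependent proper coloring of $\Gamma_m^{\boxtimes d}$ that uses at most $\exp(cm^d)$ colors.

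I would assemble this from the one-dimensional case, peeling off one strong-product factor at a time. The base object $\Gamma_m$ admits a stationary $m$-dependent range-$m$ coloring of $\Z$ with $O(m)$ colors: the case $m=1$ is exactly the $4$-coloring of \cref{main}, and the passage from $\Z$ to its $m$-th power is of the kind handled by the shift-of-finite-type results of \cite{hsw}, which keep both the alphabet size and the dependence range under control. For the step from $\Gamma_m^{\boxtimes(j-1)}$ to $\Gamma_m^{\boxtimes j}$ one must resist the naive recipe of taking an independent copy of the $\Gamma_m$-coloring along the new axis and forming tuples: that does give a proper coloring of the product, but it is not finitely dependent, because two sites that share a coordinate then share an underlying symbol. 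Instead one invokes the $\Z\to\Z^d$ transfer of \cite{hsw}, in which the color at a site encodes a whole block (of size of order $m^d$) of underlying symbols, arranged along fresh slabs so that well-separated sites draw on disjoint randomness; this is precisely what inflates the alphabet to $\exp(cm^d)$, and a routine inspection of the construction shows the dependence range stays at $m$. Every map used commutes with the $\Z^d$-action, so stationarity is preserved throughout.

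The essential difficulty is the conflict between stationarity and finite dependence that is already the heart of \cref{main}: forbidding repeats within distance $m$ pushes the process to behave locally like a counter modulo roughly $m$, and while a deterministic periodic counter realizes this, it fails stationarity, whereas a randomly shifted periodic counter is stationary but carries a single global ``phase'' variable and hence violates every finite-dependence requirement. Overcoming this is exactly what \cref{main} and the transfers of \cite{hsw} are designed for, so the genuine work here is organizational: checking that the chain of transfers --- $\Z$ with range one, then $\Z$ with range $m$, then $\Z^2$, \dots, up to $\Z^d$ with $\ell_\infty$-range $m$ --- can be run so that the end product is stationary, exactly $m$-dependent, a proper coloring of $H_m^d$, and uses at most $\exp(cm^d)$ colors. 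The remaining ingredients --- the $\ell_1$-to-$\ell_\infty$ relaxation, the identity $H_m^d=\Gamma_m^{\boxtimes d}$, the observation that a proper coloring of $H_m^d$ is a range-$m$ coloring, and the counting of colors --- are all routine.
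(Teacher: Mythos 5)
Your proposal takes a very different route from the paper and, as sketched, has a genuine gap. The paper's proof of \cref{range-m} is a direct one-step construction that does not invoke \cite{hsw} at all: fix a set $H$ containing exactly one of $h,-h$ for each lattice vector with $0<\|h\|_1\le m$ (so $|H|=O(m^d)$), put an independent copy of the $1$-dependent $4$-coloring from \cref{main} on \emph{every} line with direction in $H$ (not merely one copy per direction), and let $Z_v$ be the $[4]^H$-valued tuple of colors assigned to $v$ by the lines through it. Any two points with $0<\|u-v\|_1\le m$ are consecutive on the line of direction $u-v$, so the coloring is range-$m$; and if $A,B$ are at $\ell_1$-distance $>m$, a line of direction $h$ meets them in positions more than $m/\|h\|_1\ge 1$ apart, i.e.\ non-consecutively, so $1$-dependence per line plus independence across lines gives $m$-dependence. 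This yields $q=4^{|H|}\le\exp(cm^d)$ directly.

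Your reductions ($\ell_1\to\ell_\infty$, identifying the $\ell_\infty$ graph with $\Gamma_m^{\boxtimes d}$) are fine, and your diagnosis of the one-copy-per-axis tuple construction is correct: it genuinely fails finite dependence because far-apart sites in the same hyperplane share a symbol. But you do not supply the fix, which is exactly the point: the paper replaces ``one copy per direction'' by ``one \emph{independent} copy per \emph{line},'' so far-apart sites sharing a direction still draw on either independent lines or distant positions on the same $1$-dependent line. Without this idea, the step-by-step strong-product assembly you propose is left unsupported. Worse, the ingredient you lean on to produce the base object — a range-$m$ coloring of $\Z$ via ``the shift-of-finite-type results of \cite{hsw}'' — is circular in the paper's logical structure: \cref{from-hsw}(ii) and hence \cref{sft} take a range-$m$ coloring as \emph{input}, and the whole point of \cref{range-m} is to supply that input. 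So as written, the proposal does not close; the missing idea is the independent-copy-per-line construction.
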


\begin{proof}
A \df{line} is a subset of $\Z^d$ of the form $L=\{a+ih:i\in\Z\}$, where
$a,h\in\Z^d$ and $h\neq 0$.  We call $h$ the \df{direction} of $L$.  We will
place independent copies of the $1$-dependent $4$-coloring along each line in
a suitable family, and combine them to form the desired process.

More precisely, let $H$ be a set comprising exactly one of $h$ and $-h$ for
every $h\in\Z^d$ with $0\simm h$.  (For instance, in the case $m=1$ we can
take $H$ to be the set of $d$ standard basis vectors.)  For each line $L$ of
$\Z^d$ with direction in $H$, take a copy $X^L$ of the $1$-dependent
$4$-coloring of \cref{main}, with the copies being independent for different
lines. Assign the color $X^L_j$ to the point $a+jh$, where
$L=\{a+ih:i\in\Z\}$ (and $a\in L$ is chosen arbitrarily, but is deterministic
and fixed for the particular line). Let $Y^h_v\in[4]$ denote the color thus
assigned to $v\in\Z^d$ by the unique line of direction $h$ passing through
$v$. Finally define $Z_v$ to be the vector $(Y^h_v:h\in H)\in [4]^H$.  The
desired process is $Z=(Z_v)_{v\in\Z^d}$.

Clearly $Z$ is stationary, and its elements take $4^{|H|}$ values.  It is a
range-$m$ coloring since for any $u,v$ with $u \simm v$ there is a line on
which $u,v$ are consecutive points, so $Z_u$ and $Z_v$ differ in the
coordinate corresponding to its direction. (Two points on a line of direction
$h$ are said to be \df{consecutive} on the line if they differ by $\pm h$.)
To check $m$-dependence, note that if $A,B\subseteq \Z^d$ are at distance
greater than $m$ from each other then every line with direction in $H$ that
intersects both $A$ and $B$ does so in two non-consecutive sets. Thus $Z|_A$
and $Z|_B$ are functions of independent collections of random variables.
\end{proof}

\begin{proof}[Proof of \cref{zd}(i)]
This is \cref{range-m} with $m=1$.  (The number of colors
is $q=4^d$).
\end{proof}

To state the relevant results from \cite{hsw} we need to
generalize block-factors to $d$ dimensions. Denote the ball
$B(r):=\{v\in\Z^d:\|v\|\leq r\}$.  A \df{block-factor map}
is a map $F:\R^{\Z^d}\to\R^{\Z^d}$ characterized by an
integer $r$ called the \df{radius} and a measurable
function $f:\R^{B(r)} \to \R$ via
$$(F(x))_v = f\bigl((\theta^{-v} x)|_{B(r)}\bigr), \qquad x\in
\R^{\Z^d},\, v\in\Z^d,$$
 where $\theta^{-v}$ denotes translation by
 $-v$ (so $(\theta^{-v}x)_u=x_{v+u}$).
(Thus, an $r$-block-factor on $\Z$ is a process that can be
expressed as a radius-$\lfloor r/2\rfloor$ block-factor map
of an i.i.d.\ process on $\Z$).

\begin{lemma}\label{block-k-dep}
Let $X$ be a stationary $k$-dependent process on $\Z^d$ and let $F$ be a
radius-$r$ block-factor map.  Then $F(X)$ is stationary and
$(2r+k)$-dependent.
\end{lemma}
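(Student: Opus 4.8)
The plan is to follow the same template as the observation made just before this lemma (reconciling $k$-dependence on $\Z$ with the graph definition): track how the block-factor map enlarges the sets on which individual coordinates depend, and then apply the $k$-dependence of $X$ directly.

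First I would dispose of stationarity. Writing $\theta^w$ for translation by $w\in\Z^d$ (so that $(\theta^w x)_u=x_{u-w}$, consistent with the convention $(\theta^{-v}x)_u=x_{v+u}$ in the text), the defining formula $(F(x))_v=f\bigl((\theta^{-v}x)|_{B(r)}\bigr)$ gives $(F(\theta^w x))_v=f\bigl((\theta^{w-v}x)|_{B(r)}\bigr)=(\theta^w F(x))_v$ for all $v,w$, so $F$ commutes with all translations. Since $X$ is stationary, $F(X)\stackrel{d}{=}F(\theta^w X)=\theta^w F(X)$ for every $w$, i.e.\ $F(X)$ is stationary.

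For the dependence claim, the key point is that each coordinate $(F(X))_v$ is a measurable function of the restriction of $X$ to the ball $v+B(r)$. Consequently, for any set $A\subseteq\Z^d$, writing $A^{+r}:=\{u\in\Z^d:\mathrm{dist}(u,A)\le r\}$ for the $r$-neighbourhood of $A$, the restriction $(F(X))|_A$ is measurable with respect to the $\sigma$-algebra generated by $X|_{A^{+r}}$. Now take $A,B\subseteq\Z^d$ with $\mathrm{dist}(A,B)>2r+k$. I would check that then $\mathrm{dist}(A^{+r},B^{+r})>k$: given $u\in A^{+r}$ and $u'\in B^{+r}$, choose $a\in A$ and $b\in B$ with $\|u-a\|\le r$ and $\|u'-b\|\le r$; the triangle inequality gives $\|u-u'\|\ge\|a-b\|-\|u-a\|-\|u'-b\|\ge\mathrm{dist}(A,B)-2r>k$. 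Since $X$ is $k$-dependent, the families $X|_{A^{+r}}$ and $X|_{B^{+r}}$ are independent, hence so are the $\sigma$-algebras they generate, and in particular $(F(X))|_A$ and $(F(X))|_B$ are independent. As $A$ and $B$ were arbitrary subsets at distance greater than $2r+k$, the process $F(X)$ is $(2r+k)$-dependent.

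I do not anticipate any real obstacle; the only step deserving care is the elementary bookkeeping that passing to $r$-neighbourhoods decreases the separation between two sets by at most $2r$, which I have isolated in the triangle-inequality estimate above, together with the (routine) remark that measurable functions of independent families are independent.
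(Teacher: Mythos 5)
Your proof is correct and fills in precisely the details that the paper compresses into the one-line remark ``This follows easily from the definitions'': commuting $F$ with translations for stationarity, and the triangle-inequality bookkeeping that shows the preimage sets $A^{+r}$, $B^{+r}$ are still at distance greater than $k$, at which point $k$-dependence of $X$ applies directly. This is the same approach the paper intends; no substantive difference.
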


\begin{proof}
This follows easily from the definitions.
\end{proof}

\begin{samepage}
\begin{thm}[Holroyd, Schramm and Wilson; \cite{hsw}]\label{from-hsw}\
\begin{ilist}
\item Let $d\geq 1$.  There exists $m$ such that for any $q$ there exists
    a block-factor map $F$ with the following property.  If
$X$ is a range-$m$ $q$-coloring of $\Z^d$ then $F(X)$ is
a (range-$1$) $4$-coloring of $\Z^d$.
\item Let $S$ be a non-lattice shift of finite type on
    $\Z$. There exists
    $m$ such that for any $q$ there exists a block-factor map $F$ with
    the following property.  If $X$ is a range-$m$ $q$-coloring of $\Z$
    then $F(X)$ belongs to $S$ almost surely.
\end{ilist}
\end{thm}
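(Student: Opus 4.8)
This statement is from \cite{hsw}; here is the plan. Both parts follow a common two-phase template. Phase~1 reduces the arbitrarily large palette to a bounded one by a standard distributed color-reduction while keeping the range-$m$ structure; Phase~2 applies a single bounded-radius translation-equivariant recoloring that forces the target object, and this is where all the difficulty lies. For Phase~1, view a range-$m$ $q$-coloring of $\Z^d$ as a proper vertex-coloring of the graph $G_m$ with vertex set $\Z^d$ and edges $\{\{u,v\}:0<\|u-v\|\le m\}$, whose maximum degree $\Delta=\Delta(m,d)=|B(m)|-1$ is bounded. By Linial's reduction \cite{linial} --- apply a cover-free family $\{F_1,\dots,F_c\}\subseteq 2^{[t]}$ with $t=O(\Delta^2\log c)$, recoloring a vertex $v$ of current color $i$ by an element of $F_i\setminus\bigcup_{u\sim v}F_{X_u}$ --- one round, a radius-$m$ local rule, turns a proper $c$-coloring of $G_m$ into a proper $O(\Delta^2\log c)$-coloring; iterating $O(\log^{*}q)$ times and then stripping the top color one round at a time yields a block-factor map of radius $O(m\log^{*}q)$ producing a range-$m$ $C$-coloring with $C=C(m,d)$ bounded (one can even reach $C=\Delta+1$, but only boundedness matters). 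On $\Z$ one may instead run Cole--Vishkin reduction along the successor direction; in particular for $d=1$ this already delivers a proper $4$-coloring (even a $3$-coloring) directly from a range-$m$, hence range-$1$, $q$-coloring, so Phase~2 is needed only for $d\ge2$ in part~(i).

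For part~(i) with $d\ge2$, I must produce one bounded-radius equivariant rule sending a range-$m$ $C$-coloring of $\Z^d$ --- with $m$ large, chosen depending only on $d$ and fixed throughout Phase~1 too --- to a proper $4$-coloring of $\Z^d$. Since $\Z^d$ is bipartite, a proper $4=2\times2$-coloring is equivalent to a pair $(\rho_1,\rho_2)$ of $\{0,1\}$-valued functions whose monochromatic edge-sets $\{\{u,v\}\in E:\rho_i(u)=\rho_i(v)\}$ are disjoint, that is, to a covering of $E(\Z^d)$ by two edge-cuts $\partial A_1\cup\partial A_2$. The plan is to take $\rho_1$ to be a ``noisy parity'': using range-$m$ distinctness, carve $\Z^d$ locally into bounded-diameter cells --- e.g.\ Voronoi cells of a locally chosen distance-$\ge m$ net, obtained by the bounded-round maximal-independent-set algorithm on $G_m$ now that the palette is bounded --- and $2$-color each cell by the true parity up to a per-cell offset. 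The offsets cannot be made globally consistent, but using the $G_m$-coloring to orient cell boundaries one arranges that the residual monochromatic edges form a graph of maximum degree $\le1$, namely a matching; a matching is then properly $2$-colored by a further bounded-radius rule (break the symmetry of each matched pair via the $G_m$-coloring), giving $\rho_2$. This is the substantive content of \cite{hsw}, and the main obstacle: engineering the cell decomposition and boundary orientations so that the defect graph is a matching. It is exactly here that the value $4$ is essential --- the analogous construction with $3$ colors fails, in accordance with the known impossibility of a stationary finitely dependent $3$-coloring of $\Z^d$ for $d\ge2$.

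For part~(ii), write $S=S(q_0,m_0,W)$ and fix $w\in W$ with $\gcd T(w)=1$. The set $T(w)$ is a submonoid of $(\Z_{\ge1},+)$: given two bi-infinite elements of $S$ each carrying two copies of $w$ at offsets $t$ and $t'$, splice them along a shared copy of $w$, which creates no forbidden length-$m_0$ window since every such window lies inside one of the two halves after the obvious re-indexing; hence $T(w)$ is cofinite, say $\{t:t\ge N\}\subseteq T(w)$ with $N=N(S)$, and one sets $m=m(S)$ large, at least $2N$. Given a range-$m$ $q$-coloring $X$ of $\Z$, apply Phase~1 to get a range-$m$ $C$-coloring, then run the bounded-round maximal-independent-set algorithm on $G_N$ to obtain a locally defined bi-infinite increasing ``breakpoint'' sequence $(b_i)_{i\in\Z}$ with gaps $g_i:=b_{i+1}-b_i\in[N+1,2N]$. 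For each possible gap value $g>N$ choose canonically (say lexicographically least) a word $\omega^{(g)}$ of length $g+m_0$ occurring in $S$ with $w$ in both its first and its last $m_0$ coordinates, and set $F(X)$ equal to $\omega^{(g_i)}$ on the block $[b_i+1,b_{i+1}+m_0]$; consecutive blocks overlap exactly in a copy of $w$, so $F(X)$ is well defined, and every length-$m_0$ window of it lies in $W$ (interior windows by the choice of $\omega^{(g_i)}$, straddling windows by the same re-indexing). Thus $F(X)\in S$ and $F$ is a block-factor map of radius $\mathrm{poly}(N)$. Here Phase~1 is routine and the crux is the breakpoint extraction, which is precisely why the input must be range-$m$ and not range-$1$: a rigid proper coloring of $\Z$ with few colors can lack a local maximum along long stretches, whereas the range-$m$ condition forces a strict maximum in every window of $m$ sites and lets the maximal-independent-set algorithm run on $G_N$.
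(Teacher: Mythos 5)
The paper does not prove \cref{from-hsw}: it is imported as a black box from \cite{hsw}, and the authors state immediately afterward that the formulation is designed ``to encapsulate the relevant results from \cite{hsw} cleanly without going into details of their proofs.'' There is therefore no in-paper proof against which your attempt can be compared; a genuine reconstruction has to engage directly with \cite{hsw}.

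Taken on its own merits, your proposal has a real gap in part~(i) for $d\geq 2$. After the palette reduction you reformulate the goal as covering $E(\Z^d)$ by two edge-cuts, and propose a locally built cell decomposition whose residual monochromatic edges form a matching, which you then $2$-color by a further local rule. But you yourself flag the central step --- ``engineering the cell decomposition and boundary orientations so that the defect graph is a matching'' --- as ``the main obstacle'' and do not carry it out. That step is the entire nontrivial content of part~(i), so what you have is a plan, not a proof. (It is also not evident that this is the route taken in \cite{hsw}; the quantitative bounds quoted from that paper are of tower-function type, which points to a hierarchical recursion rather than a single Voronoi-plus-matching pass.) Your sketch of part~(ii) is closer to complete and captures the correct ingredients --- the splicing argument showing $T(w)$ is a numerical semigroup and hence cofinite, a locally computed sparse breakpoint set via MIS on $G_N$, and canonical gap-filling words overlapping in $w$ --- but the claimed gap range $[N+1,2N]$ should be $[N+1,2N+1]$ (the site at distance $N+1$ to the right of a breakpoint is undominated if the next breakpoint is more than $2N+1$ away), and the assertion that MIS extraction is achievable by a bounded-radius rule after palette reduction is stated rather than justified.
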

\end{samepage}

The somewhat awkward series of quantifiers above reflects the need to
encapsulate the relevant results from \cite{hsw} cleanly without going into
details of their proofs.

\begin{proof}[Proof of \cref{zd}(ii) and \cref{sft}]
The results are immediate from
\cref{range-m,block-k-dep,from-hsw}.
\end{proof}

We make a few remarks about the scope of \cref{zd,sft}.
While the colorings of \cref{zd} are stationary (meaning
invariant under translations), they are not invariant in
law under all {\em isometries} of $\Z^d$, because the proof
imposed an ordering on the set of line directions, which is
not invariant under permuting the coordinates.  We do not
know how to construct an isometry-invariant
finitely-dependent coloring of $\Z^d$ for $d \geq 2$.
Similar remarks apply to trees, as pointed out by Russell
Lyons (personal communication).  Treating a regular tree as
the Cayley graph of a free group, we obtain a $1$-dependent
coloring that is invariant under the action of the group
itself (which is vertex-transitive), by the same approach
as in the proof of \cref{zd}. However, we do not know how
to construct a fully automorphism-invariant finitely
dependent coloring.

As remarked in the introduction, another result of
\cite{hsw} implies that there is no stationary
$k$-dependent $3$-coloring of $\Z^d$ for any $k$ and $d\geq
2$.  In fact, there is no stationary $3$-coloring of $\Z^2$
whose correlations decay faster than a certain polynomial
rate.

It is straightforward to check that if $S$ is a {\em
lattice} shift of finite type on $\Z$ then there is no
stationary finitely dependent process that belongs almost
surely to $S$. In fact, there is no stationary {\em mixing}
process that belongs to $S$; again, details appear in
\cite{hsw}.

\section{One-dependent hard-core processes}
\label{sec-hardcore}

In this section we prove \cref{bounds}.  We also discuss
properties of $1$-dependent hard-core processes, which are
interesting in their own right. Let $G=(V,E)$ be a simple,
countable, undirected graph with all degrees finite. Recall
that a hard-core process $J=(J_v)_{v\in V}$ is a
$\{0,1\}$-valued process with no adjacent $1$'s, and that
$\ph(G)$ is defined to be the supremum of $p$ for which
there exists a $1$-dependent hard-core process with all its
one-vertex marginals $\P(J_v=1)$ equal to $p$.

In \cref{ph,physics} below we record some simple but interesting observations
about $\ph$.  Closely related ideas appear in work of Scott and Sokal
\cite{scott-sokal,scott-sokal2}, where a rich web of interconnections
involving mathematical physics and probabilistic combinatorics is explored.
The arguments we use in the proofs of \cref{ph,physics} are largely present
in those articles, at least implicitly.  However, our particular viewpoint
(focussing on $1$-dependent hard-core processes, especially on infinite
graphs) is apparently novel, as is our application to coloring. As another
application of our approach, we give an alternative proof of a result of
Shearer \cite{shearer} at the end of this section.

\begin{lemma}\label{ph} Let $G$ be a graph.
For each $p\leq \ph$ there exists a unique $1$-dependent
hard-core process with all one-vertex marginals equal to
$p$.  This process is invariant in law under all
automorphisms of $G$.
\end{lemma}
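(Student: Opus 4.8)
The plan is to prove uniqueness first, by a direct inclusion--exclusion argument showing that the one--vertex marginals already force the entire law, then to prove existence for all $p\le\ph$ by combining a compactness argument with independent thinning, and finally to read off automorphism invariance as a free consequence of uniqueness.

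For uniqueness, suppose $J$ is any $1$-dependent hard-core process with $\P(J_v=1)=p$ for every $v$. Since $J$ takes values in $\{0,1\}^V$ and $V$ is countable, its law is determined by the numbers $f(A):=\P(J_v=1\ \forall v\in A)$ over finite $A\subseteq V$: by inclusion--exclusion over the events $\{J_v=0\}$ one recovers $\P(J|_A=\epsilon)$ for each $\epsilon\in\{0,1\}^A$ from the $f(B)$ with $B\subseteq A$. I would then argue that $f$ is completely pinned down. If $A$ contains two adjacent vertices then $f(A)=0$ by the hard-core property. In general, the connected components $C_1,\dots,C_r$ of the induced subgraph $G[A]$ are pairwise at graph-distance $>1$ (an edge of $G$ joining two of them would already lie in $G[A]$), so $1$-dependence gives $f(A)=\prod_i f(C_i)$, with $f(C_i)=0$ whenever $|C_i|\ge 2$ and $f(C_i)=p$ whenever $|C_i|=1$. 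Hence $f(A)=p^{|A|}$ if $A$ is an independent set and $f(A)=0$ otherwise. This value depends only on $p$ and the isomorphism type of $G[A]$, which gives both uniqueness and --- since the explicit formula for $f$ is visibly preserved by every automorphism of $G$ --- invariance of the law under $\mathrm{Aut}(G)$. (Equivalently, invariance follows from uniqueness directly: for $\phi\in\mathrm{Aut}(G)$ the process $(J_{\phi(v)})_{v\in V}$ is again $1$-dependent and hard-core with all marginals $p$, hence has the same law as $J$.)

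For existence, the case $\ph=0$ is trivial (only $p=0$ is then in question, realized by $J\equiv 0$), so assume $\ph>0$. First I would produce a $1$-dependent hard-core process with all marginals exactly $\ph$: choose $p_n\uparrow\ph$ in the set defining $\ph$, with witnessing processes $J^{(n)}$, and pass to a subsequential limit $J$ in the compact metrizable space of probability measures on $\{0,1\}^V$; the factorization identities expressing $1$-dependence over pairs of finite sets at distance $>1$, and the hard-core constraint $\P(J_u=J_v=1)=0$ for adjacent $u,v$, are closed conditions and hence survive the limit, while $\P(J_v=1)=\lim_n p_n=\ph$. For a general $p\in[0,\ph]$ I would thin this process: let $(B_v)_{v\in V}$ be i.i.d.\ $\mathrm{Bernoulli}(p/\ph)$, independent of $J$, and set $J'_v:=J_vB_v$. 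Deleting $1$'s cannot create an adjacent pair of $1$'s, so $J'$ is hard-core; and for $A,B$ at graph-distance $>1$ the pairs $(J|_A,B|_A)$ and $(J|_B,B|_B)$ are independent (using $1$-dependence of $J$, the i.i.d.\ structure of the $B_v$, and independence of $J$ from $B$), so $J'$ is $1$-dependent, with $\P(J'_v=1)=\ph\cdot(p/\ph)=p$.

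The argument is essentially routine. The one step that needs genuine care is the factorization $f(A)=\prod_i f(C_i)$: the definition of $1$-dependence only asserts independence of \emph{pairs} of restrictions, so one must deduce mutual independence of three or more components by induction, grouping components and using that a union of sets each at distance $>1$ from $C$ is again at distance $>1$ from $C$ --- exactly the reduction carried out at the start of \cref{extras}. The other point worth noting is the endpoint $p=\ph$: existence there requires knowing that the supremum in the definition of $\ph$ is attained, which is precisely what the compactness step supplies.
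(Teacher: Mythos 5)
Your proof is correct and follows essentially the same route as the paper: uniqueness (and automorphism-invariance) by reducing the law to the quantities $\P(J\equiv 1\text{ on }A)$ via inclusion--exclusion and evaluating these as $p^{|A|}\ind[A\text{ independent}]$, and existence by combining thinning with a compactness/weak-limit argument at the endpoint $p=\ph$. The only difference is cosmetic: you first realize $p=\ph$ by compactness and then thin down, whereas the paper first states the thinning monotonicity and then handles $p=\ph$ by compactness; the observation you rightly flag about upgrading pairwise to mutual independence across components is the same implicit induction the paper relies on.
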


\begin{proof}
We first observe a general monotonicity statement: if a $1$-dependent
hard-core process $J$ with one-vertex marginals $\P(J_v=1)=p_v$ exists, and
if $p'_v\leq p_v$ for all $v\in V$, then such a process exists with marginals
$(p'_v)$.  This follows by thinning: let $(\epsilon_v)_{v\in V}$ be
$\{0,1\}$-valued, independent of each other and of $J$, with
$\P(\epsilon_v=1)=p'_v/p_v$; then take $J'_v=\epsilon_v J_v$.

The above shows that a $1$-dependent hard-core process
exists for all $p<\ph$.  To extend this to $p=\ph$, take a
sequence $p_n\nearrow \ph$ and a process for each $p_n$,
and consider a subsequential weak limit in distribution $J$
(which exists, by compactness). Since probabilities of all
cylinder events converge, $J$ has all marginals equal to
$\ph$, and is a $1$-dependent hard-core process.

Uniqueness and automorphism-invariance follow from the more general fact that
the law of a $1$-dependent hard-core process is determined by its one-vertex
marginals $p_v=\P(J_v=1)$. Indeed, the law of a binary process $J$ is
determined by the probabilities $\P(J \equiv 1 \text{ on }A)$ for finite
$A\subseteq V$, since all other cylinder probabilities can be computed from
them by inclusion-exclusion. But this probability equals $0$ if $A$ contains
two neighbors, and otherwise it is $\prod_{v\in A} p_v$.
\end{proof}

For a finite set of vertices $A\subseteq V$ and $\lambda\in\R$, define
$$Z_A(\lambda):=\sum_{B\in\mathcal{I}(A)} \lambda^{|B|},$$
where $\mathcal{I}$ is the set of all independent subsets
of $A$ (or hard-core configurations), i.e.\ subsets of $A$
that do not contain any two neighbors in $G$. This is the
partition function of the standard hard-core model of
statistical physics; it is also known as the independence
polynomial of the induced subgraph of $A$. See e.g.\
\cite{levit-mandrescu,scott-sokal}.

\begin{lemma}\label{physics}
Let $G=(V,E)$ be a graph and let $p\in[0,1]$.  We have
$p\leq p_h$ if and only if $Z_A(-p)\geq 0$ for every finite
$A\subseteq V$.  If $G$ is infinite and connected then this
is also equivalent to the statement that the strict
inequality $Z_A(-p)> 0$ holds for every finite $A\subset
V$.
\end{lemma}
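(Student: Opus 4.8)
First I would record the key identity. Exactly as in the proof of \cref{ph}, any $1$-dependent hard-core process $J$ with all one-vertex marginals equal to $p$ satisfies $\P(J\equiv 1\text{ on }B)=p^{|B|}$ for every finite independent set $B$ (peel off one vertex at a time, using that distinct vertices of $B$ are non-adjacent, hence at graph-distance $\geq 2$, hence independent), while $\P(J\equiv 1\text{ on }B)=0$ when $B$ is not independent. Inclusion--exclusion then gives, for every finite $A\subseteq V$,
$$\P(J\equiv 0\text{ on }A)=\sum_{B\subseteq A}(-1)^{|B|}\P(J\equiv 1\text{ on }B)=\sum_{B\in\mathcal I(A)}(-p)^{|B|}=Z_A(-p).$$
Consequently, if $p\leq\ph$, then by \cref{ph} such a process exists, and $Z_A(-p)=\P(J\equiv 0\text{ on }A)\geq 0$ for every finite $A$; this settles the ``only if'' direction.

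For the converse I would assume $Z_A(-p)\geq 0$ for all finite $A$ and build a process via the Kolmogorov extension theorem from the prescribed ``up-set'' probabilities $\P(J\equiv 1\text{ on }W):=p^{|W|}\ind[W\text{ independent}]$. For finite $S\subseteq V$ and $\eta\in\{0,1\}^S$, write $C=\{v:\eta_v=1\}$, $A=\{v:\eta_v=0\}$, and let $N(C)$ denote the set of vertices adjacent to some vertex of $C$. Since $C\cup B$ is independent precisely when $C$ is independent and $B\in\mathcal I(A\setminus N(C))$, the inclusion--exclusion value $\sum_{B\subseteq A}(-1)^{|B|}\P(J\equiv 1\text{ on }C\cup B)$ reduces to
$$\P(J|_S=\eta)=\begin{cases}p^{|C|}\,Z_{A\setminus N(C)}(-p),&C\text{ independent};\\ 0,&\text{otherwise}.\end{cases}$$
By Möbius inversion these finite-dimensional laws are automatically consistent, with total mass $\P(J\equiv 1\text{ on }\emptyset)=1$, and they are nonnegative by hypothesis, so Kolmogorov's theorem produces a process $J$. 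Then $S=\{v\}$ gives $\P(J_v=1)=p\,Z_\emptyset(-p)=p$; the formula vanishes whenever $C$ contains an edge, so $J$ is hard-core; and if $S_1,S_2$ lie at graph-distance $>1$, so that no edges join them, then $C$ is independent iff $C\cap S_1$ and $C\cap S_2$ both are, the set $A\setminus N(C)$ is the disjoint union of $(A\cap S_1)\setminus N(C\cap S_1)$ and $(A\cap S_2)\setminus N(C\cap S_2)$ with no edges between them, and $Z$ is multiplicative over disjoint vertex sets with no connecting edges; hence $\P(J|_{S_1\cup S_2}=\eta)=\P(J|_{S_1}=\eta|_{S_1})\,\P(J|_{S_2}=\eta|_{S_2})$, so $J$ is $1$-dependent. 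Thus $p\leq\ph$, which completes the first assertion. I expect this converse to be the main obstacle: the work is in seeing that the single product formula $p^{|C|}Z_{A\setminus N(C)}(-p)$ encodes all of consistency, nonnegativity (immediate from the hypothesis), the hard-core property, and --- via multiplicativity of $Z$ over edgeless unions --- the $1$-dependence.

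For the strict version with $G$ infinite and connected, one implication is trivial: $Z_A(-p)>0$ for all finite $A$ forces $Z_A(-p)\geq 0$ for all finite $A$, hence $p\leq\ph$. For the other, I would argue by descent: suppose $p\leq\ph$ but $Z_A(-p)=0$ for some finite $A$. Since $Z_\emptyset(-p)=1$ we have $A\neq\emptyset$, and since $G$ is infinite and connected there is a vertex $v\in V\setminus A$ adjacent to $A$. The standard recursion $Z_{A'\cup\{v\}}(\lambda)=Z_{A'}(\lambda)+\lambda Z_{A'\setminus N(v)}(\lambda)$ (split independent subsets according to whether they contain $v$), taken at $\lambda=-p$ with $A'=A$, gives $0\leq Z_{A\cup\{v\}}(-p)=-p\,Z_{A\setminus N(v)}(-p)\leq 0$, using the nonnegativity of $Z_{A\cup\{v\}}(-p)$ and $Z_{A\setminus N(v)}(-p)$ already established. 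Hence $p\,Z_{A\setminus N(v)}(-p)=0$; as $Z_A(0)=1\neq 0$ we must have $p>0$, so $Z_{A\setminus N(v)}(-p)=0$ with $A\setminus N(v)\subsetneq A$. Iterating yields a strictly decreasing chain of finite sets on which $Z(-p)$ vanishes, which is impossible since the chain reaches $\emptyset$, where $Z_\emptyset(-p)=1$. Therefore $Z_A(-p)>0$ for every finite $A$.
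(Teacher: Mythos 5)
Your proof is correct and follows essentially the same route as the paper: the forward direction via inclusion--exclusion giving $\P(J\equiv 0\text{ on }A)=Z_A(-p)$, the converse by defining cylinder probabilities $p^{|C|}Z_{A\setminus N(C)}(-p)$ and applying Kolmogorov extension, and the strict version via the recursion $Z_{A\cup\{v\}}(-p)=Z_A(-p)-pZ_{A\setminus N(v)}(-p)$ applied to a boundary vertex. The only cosmetic difference is in the last part: the paper takes $A$ minimal with $Z_A(-p)=0$ and derives $Z_{A\cup\{u\}}(-p)<0$ directly, whereas you phrase it as an infinite-descent argument reaching $\emptyset$; these are equivalent. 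You also spell out the consistency and $1$-dependence checks that the paper declares ``easy to check,'' which is fine and in fact uses the right ingredient (multiplicativity of $Z$ over edgeless disjoint unions).
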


\begin{proof}
Suppose that $p\leq \ph$, so a $1$-dependent hard-core
process $J$ with marginals $p$ exists.  Then by
inclusion-exclusion,
\begin{equation}\label{inc-exc}
\P(J\equiv 0 \text{ on }A) = \sum_{B\subseteq A} (-1)^{|B|} \, \P(J \equiv 1
\text{ on }B) = Z_A(-p),
\end{equation}
so the last quantity is non-negative.

Moreover, all other cylinder probabilities can be expressed in terms of those
above. Let $B,C$ be disjoint finite sets of vertices with $B\in
\mathcal{I}(V)$, and let $C'$ be the set of vertices of $C$ that have no
neighbor in $B$. Then
\begin{align*}
\P(J\equiv 1 \text{ on }B, \;J\equiv 0 \text{ on }C)
&=\P(J\equiv 1 \text{ on }B, \;J\equiv 0 \text{ on }C') \\
&=p^{|B|} \, Z_{C'}(-p).
\end{align*}
Thus, given $Z_A(-p)\geq 0$ for all $A$, we can compute
non-negative expressions for all cylinder probabilities,
and it is easy to check that they are consistent and give
rise to a $1$-dependent hard-core process with marginals
$p$.  Thus $p\leq \ph$.

Here is a useful recurrence.  Suppose $A\subseteq V$ is finite, let $u\in A$,
and define $A':=A\setminus\{u\}$ and $A'':=A'\setminus N(u)$, where $N(u)$
denotes the set of neighbors of $u$. Then by an argument similar to the
above,
\begin{equation}\label{recur}
  Z_A(-p)=
 Z_{A'}(-p)
 -p\,Z_{A''}(-p).
\end{equation}
(Indeed, it is a standard and straighforward fact that this
identity holds for any parameter $\lambda$, regardless of
the existence of the process $J$; see e.g.\
\cite{levit-mandrescu,scott-sokal}).

To prove the final claimed equivalence, suppose that $G$ is
infinite and connected.  Let $0<p\leq \ph$. (If $\ph=0$
then the claim is trivial.) Suppose that $Z_A(-p)=0$ for
some finite $A\subset V$, and let $A$ be minimal with this
property.  There exists a vertex $u\notin A$ that is
adjacent to $A$.  Let $B=A\cup\{u\}$,  $B'=A$, and
$B''=A\setminus N(u)$.  Then applying \eqref{recur} to
$B,B',B''$ gives that $Z_B(-p)$ is negative, a
contradiction.
\end{proof}

For an infinite connected $G$, our critical point $\ph$
coincides with the critical point $\lambda_{\rm c}$ defined
in \cite{scott-sokal} (in (5.3) and the immediately
following remark) in terms of the complex zeros of $Z$.
This follows immediately from \cref{physics} above together
with Theorem 2.2(b,c) and (3.1) of \cite{scott-sokal}.

Consequently, the following bounds on $\ph$ are available. For any infinite
connected graph $G$ of maximum degree $\Delta$,
\begin{equation}\label{graph-bounds}
\frac{(\Delta-1)^{\Delta-1}}{\Delta^\Delta}
\leq \ph(G) \leq \frac14,\qquad \Delta\geq 2.
\end{equation}
For the infinite $\Delta$-regular tree $T_{\Delta}$, the
lower bound is sharp:
\begin{equation}\label{tree}
\ph(T_{\Delta}) = \frac{(\Delta-1)^{\Delta-1}}{\Delta^\Delta},
\qquad \Delta\geq 2.
\end{equation}
For the hypercubic lattice $\Z^d$,
\begin{equation}\label{zd-bounds}
\frac{(2d-1)^{2d-1}}{(2d)^{2d}}
\leq \ph(\Z^d) \leq \frac{d^d}{(d+1)^{d+1}}, \qquad d\geq 1.
\end{equation}
Proofs of \eqref{graph-bounds},\eqref{zd-bounds} appear in \cite[\S5.2,
\S8.4]{scott-sokal2}; the lower bound in \eqref{graph-bounds} amounts to the
Lov\'asz local lemma.  The equality \eqref{tree} is proved in \cite{shearer},
and an exposition of the proof also appears in
\cite{scott-sokal,scott-sokal2}.
 Note that $\ph(\Z)=1/4$.  This is a special
case of all of \eqref{graph-bounds},\eqref{tree},\eqref{zd-bounds}, and also
follows from the proof of \cref{quarter}. Using rigorous computer-assisted
methods, we supply the following improvement on \eqref{zd-bounds} in
dimensions $2$ and $3$.
\begin{lemma}\label{computer} We have the strict
inequalities
 $$\ph(\Z^2) <\frac18; \qquad \ph(\Z^3) <\frac{1}{11}.$$
\end{lemma}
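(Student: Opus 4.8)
The plan is to reduce the lemma to a finite, exact computation via \cref{physics}. By that result, for an infinite connected graph $G$ and $p>0$ one has $p>\ph(G)$ if and only if some finite set $A\subseteq V(G)$ satisfies $Z_A(-p)<0$. So it suffices to exhibit one explicit finite $A\subseteq\Z^2$ with $Z_A(-\tfrac{1}{8})<0$, and one explicit finite $A\subseteq\Z^3$ with $Z_A(-\tfrac{1}{11})<0$; I would take each $A$ to be a rectangular box. The feature that makes this \emph{rigorous} rather than merely numerical is that $Z_A(\lambda)=\sum_{B\in\mathcal I(A)}\lambda^{|B|}$ has nonnegative integer coefficients, so for a rational $p=a/b$ the quantity $b^{|A|}Z_A(-p)$ is an \emph{integer}; exhibiting a box for which this integer is negative is a finite certificate with no floating-point error, and can in principle be rechecked by hand.

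To compute $Z_A(-p)$ for a box efficiently I would run the vertex-deletion recurrence \eqref{recur} as a transfer-matrix (dynamic-programming) sweep. For $\Z^2$, take $A=\{1,\dots,w\}\times\{1,\dots,h\}$ and process it one column at a time: the states are the independent subsets of a single column, i.e.\ of the path on $h$ vertices (there are a Fibonacci number of these), and the transfer matrix has $(S,T)$-entry $(-p)^{|T|}$ when $S\cap T=\emptyset$ (so that no row is occupied in adjacent columns) and $0$ otherwise, whence $Z_A(-p)$ is a suitable bilinear form in a power of this matrix. All entries are rationals with denominator a power of $b$, so the sweep stays exact. For $\Z^3$ the scheme is identical with a column replaced by a small two-dimensional cross-section (an $a\times b$ grid, with $a,b$ as small as the final box allows), processed slab by slab along the third axis; the state space is then the set of independent subsets of the $a\times b$ grid graph, whose size is the value at $1$ of that graph's independence polynomial. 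Multiplicativity of $Z$ over connected components and the recurrence \eqref{recur}, both already available above, are the only structural inputs; everything else is exact integer bookkeeping.

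Finally I would search over box dimensions, enlarging the $\Z^2$ box in both directions, and lengthening a $\Z^3$ tube of small cross-section, until the exactly computed value $Z_A(-p)$ first turns negative; the resulting explicit box is the output of the proof. I expect the main obstacle to be precisely this search: there is no a priori bound on how large a box must be taken (a priori it is even conceivable, though in fact false, that no box of feasible size has $Z_A(-p)<0$), and in the $\Z^3$ case feasibility forces the cross-section to only a few sites in two of the three directions, so one is effectively searching over tubes rather than cubes. Everything after a successful box has been found is routine exact arithmetic, and an independent implementation can reproduce the integer $b^{|A|}Z_A(-p)$ and confirm its sign.
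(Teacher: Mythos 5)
Your reduction is exactly the paper's: invoke \cref{physics} to convert the strict inequality $p>\ph(G)$ into the existence of a finite set $A$ with $Z_A(-p)<0$, take $A$ to be a rectangular box, and evaluate $Z_A(-p)$ in exact rational arithmetic so that the negative sign is a rigorous certificate (your remark that $b^{|A|}Z_A(-p)$ is an integer when $p=a/b$ is a clean way to say this). The paper's implementation differs from yours in a minor way: it applies the vertex-deletion recurrence \eqref{recur} directly, memoizing $Z_B(-p)$ over the irregular subsets $B\subseteq A$ that arise (always deleting the lexicographically largest vertex), and reports that $89077$ such subsets were needed for the $\Z^3$ box. Your transfer-matrix sweep, tracking independent subsets of a single column (or $a\times b$ cross-section) slab by slab, is a more structured and generally more economical way to organize the same computation, but it proves the same thing. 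The one piece your write-up necessarily leaves open, and correctly identifies as the real work, is the choice of box: the paper settles on $A=[13]\times[10]\subset\Z^2$, for which $Z_A(-\tfrac18)<0$, and $A=[12]\times[4]\times[4]\subset\Z^3$, for which $Z_A(-\tfrac1{11})<0$, found by experimentation. There is no a priori guarantee that rectangular boxes suffice or how large they must be; the lemma is ultimately established by exhibiting these specific certificates, which your scheme would also have found.
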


\begin{proof}
The recursion \eqref{recur} gives $Z_A(-p)$ in terms of
$Z_B(-p)$ for smaller sets $B\subset A$.  We use this to
compute $Z_A(-p)$ numerically for rectangular boxes of the
form $A=[a]\times[b]\subset\Z^2$ and
$A=[a]\times[b]\times[c]\subset\Z^3$. After some
experimentation to find appropriate box sizes, we obtained
$$Z_{[13]\times [10]}(-1/8)<0; \qquad Z_{[12]\times[4]\times[4]}(-1/11)<0,$$
giving the claimed bounds.

One must choose which vertex $u$ to remove from a set $A$ when applying
\eqref{recur}.  We always chose the lexicographically largest $u\in A$, as
this tends to limit the number of smaller sets that need to be considered.
The method turns out to be numerically unstable, so that floating-point
arithmetic cannot be used.  Instead we used exact arbitrary-precision
rational arithmetic. The quantity $Z_{[12]\times[4]\times[4]}(-1/11)$ is a
fraction with $100$ digits in the denominator, and required the computation
of $Z_B(-1/11)$ for $89077$ sets $B\subseteq[12]\times[4]\times[4]$. (We
provide the computer code in an appendix to the arxiv version of this paper.)
\end{proof}

%\enlargethispage*{2cm}
\begin{proof}[Proof of \cref{bounds}]
As remarked in the introduction, the existence of a $1$-dependent
$q$-coloring $X$ with the variables $(X_v)_{v\in V}$ identically distributed
implies that $q\geq 1/\ph$. Indeed, let $a\in [q]$ be a color with the
largest marginal probability $p_a$ ($\geq 1/q$);  then $J_v:=\ind[X_v=a]$
defines a $1$-dependent hard-core process, so $p_a\leq \ph$. Now use the
upper bounds in \eqref{tree}, \eqref{zd-bounds} and \cref{computer}.
\end{proof}

The (non-rigorous) estimate $\ph(\Z^2)=0.11933888188(1)$ was computed in
\cite{todo}.  That this is greater than $1/9$ indicates that a $9$-coloring
of $\Z^2$ cannot be ruled out by the methods of this section.

Finally, we present an application of our approach in the
context of \cite{scott-sokal}.  Motivated by the case of
$\Z$ in \cref{properties}(ii), we give a very simple
explicit construction of the critical $1$-dependent
hard-core process $J$ on the $\Delta$-regular tree
$T_{\Delta}$, thus providing an alternative proof of the
upper bound on $\ph(T_{\Delta})$ in \eqref{tree}. (The
original proof in \cite{shearer} used analytic methods).
Fix an end of the tree.  Assign the vertices i.i.d.\
$\{0,1\}$-valued labels that are $1$ with probability
$1/\Delta$.  Then let $J_v$ equal $1$ if and only if $v$
has label $1$ and all its children have label $0$.  (The
\df{children} of a vertex are the $\Delta-1$ neighbors that
do not lie on the unique path to the nominated end.) Then
$\P(J_v=1) = (\Delta-1)^{\Delta-1}/\Delta^\Delta$ as
required.  It is interesting that the construction is
invariant only under automorphisms that fix the given end,
while the process itself is fully automorphism-invariant,
by \cref{ph}.  Can the critical process on $T_{\Delta}$ be
expressed as a fully automorphism-equivariant block-factor
of an i.i.d.\ process?

\enlargethispage*{1cm}
\section*{Open Problems}

\begin{ilist}
\item Is the stationary $1$-dependent $4$-coloring of $\Z$ unique? We
    conjecture that the answer is yes. Is the stationary $2$-dependent
    $3$-coloring unique?
%\item For which $q\geq 5$ does there exist a stationary $1$-dependent
%    $q$-coloring of $\Z$ that is symmetric under permutations of the
%    colors?
\item Is there a finitely dependent coloring
    $(X_i)_{i\in\Z}$
    such that
    $X_i=f(M_i)$ for a stationary {\em countable}-state Markov chain $M$?
    (A finite state space is impossible, while an uncountable one places
    no restriction on the process). Can our two examples be expressed in
    this way?
\item What is the largest possible one-vertex marginal of
    a stationary $k$-dependent hard-core process on $\Z$ for $k\geq 2$?
      Is it $1/3$ when $k=2$?  Is the critical process unique?
\item Can one of our two colorings of $\Z$ be expressed as a block-factor
    of the other? As a finitary factor?
\item Is there a stationary finitely dependent coloring
    of
    $\Z$ that can
    be expressed as a finitary factor of an i.i.d.\ process with
    finite mean coding radius?  (In \cite{h-finfin}, the
    $4$-coloring is expressed as a finitary factor with
    infinite mean coding radius.)
\item What is the minimum number of colors $q$ needed for a stationary
    $1$-dependent $q$-coloring of $\Z^d$, for each $d\geq 2$?  (For
    $\Z^2$, the answer is between $9$ and $16$).
\item Does there exist a finitely dependent coloring of
    $\Z^d$ for $d\geq 2$ that it is invariant in law under
    all isometries of $\Z^d$?  Does there exist a finitely
    dependent coloring of a regular tree that is invariant
    under all automorphisms, or all
    automorphisms that fix a given end?
\item  On which transitive graphs is the existence of a $1$-dependent
    hard-core process with all one-vertex marginals equal to $1/q$
    sufficient for the existence of an automorphism-invariant
    $1$-dependent $q$-coloring?  (It is necessary on any graph, and
    sufficient on $\Z$).
\end{ilist}

\section*{Acknowledgements}

We thank Itai Benjamini, David Brydges, Ronen Eldan,
Jeong-Han Kim, Russell Lyons, Ben Morris, Fedja Nazarov,
Robin Pemantle, Benjamin Weiss, Peter Winkler and Fuxi
Zhang for valuable discussions.

\bibliographystyle{habbrv}
\bibliography{col}

%\begin{comment}
\newpage
\section*{Appendix: computer code}

Below we give the Python 2.7 code used in the proof of
\cref{computer}.  It computes the following values of the
independence polynomial for rectangular grids.  (The first
is included as a check).
\begin{gather*}
Z_{[3]\times[3]}(-1/5)=-\tfrac{21}{3125};\\
Z_{[13]\times[10]}(-1/8)=-\tfrac{60294169567161237625416728069877775945051113}
{25108406941546723055343157692830665664409421777856138051584};
\\
Z_{[12]\times[4]\times[4]}(-1/11)= -\;\frac{
\begin{smallmatrix}
 46344295466778955212216048923\\
 88528097877566844283627882753\\
 10889047735211360981028087687
\end{smallmatrix}
}{
\begin{smallmatrix}
941234365126854052600118651191150\\
657486806311046954882395087600037\\
9062365652829504091329792873336961
\end{smallmatrix}
}.
\end{gather*}

\small
\begin{verbatim}
from fractions import Fraction

def Z(A,t):     # independence polynomial of set A
    if A:
        if (A,t) not in memo:   # if not already computed
            u=max(A)            # choose site to remove
            B=A.difference([u])
            C=B.difference(nbrs(u))
            memo[(A,t)]=Z(B,t)+t*Z(C,t)
        return memo[(A,t)]
    else:
        return 1                # empty set

def nbrs(u):    # neighbors of a site in Z^d
    for i in xrange(len(u)):
        for k in -1,1:
            yield u[:i]+(u[i]+k,)+u[i+1:]

def grid(s):    # rectangular box in Z^d
    if s:
        return frozenset((i,)+u for u in grid(s[1:])
                                for i in xrange(s[0]))
    else:
        return frozenset([()])

memo={}
print Z(grid((3,3)),Fraction(-1,5))
print Z(grid((13,10)),Fraction(-1,8))
print Z(grid((12,4,4)),Fraction(-1,11))
\end{verbatim}
\normalsize

%\end{comment}

\end{document}